\newtheorem{theorem}{Theorem}[section]
\newtheorem{proposition}[theorem]{Proposition}
\newtheorem{question}[theorem]{Question}
\newtheorem{corollary}[theorem]{Corollary}
\newtheorem{definition}[theorem]{Definition}
\newtheorem{conjecture}[theorem]{Conjecture}
\newtheorem{remark}[theorem]{Remark}
\newtheorem{lemma}[theorem]{Lemma}
\newtheorem{example}[theorem]{Example}
\newtheorem{problem}[theorem]{Problem}
 \def\Z{\mathbb{Z}}
\def\C{\mathbb{C}}
\def\R{\mathbb{R}}
\newcommand{\lk}{\mbox{\upshape lk}\,}
\newcommand{\field}{{\mathbb{F}}}
\newcommand{\Star}{\mbox{\upshape st}\,}
\newcommand{\soc}{\mbox{\upshape Soc}\,}
\title{Thirty-five years and counting}
\author{Ed Swartz
\thanks{Research partially supported by NSF grant DMS-1200478}\\
\small Department of Mathematics, \\[-0.8ex]
\small Cornell University, Ithaca NY, 14853-4201, USA, \\[-0.8ex]
\small \texttt{ebs22@cornell.edu } }
\begin{document}
\maketitle

\begin{abstract}

It has been  35 years since Stanley  proved that $f$-vectors of boundaries of simplicial polytopes satisfy McMullen's conjectured $g$-conditions \cite{St80}.  Since then one of the outstanding questions in the realm of face enumeration is whether or not Stanley's proof  could be extended to larger classes of spheres.  Here we hope to give an overview of   various attempts to accomplish this and why we feel this is so important.  In particular, we will see a strong connection  to $f$-vectors of  manifolds and pseudomanifolds.  Along the way we have included several previously unpublished results involving how the $g$-conjecture  relates to bistellar moves and small $g_2,$ the topology and combinatorics of stacked manifolds introduced independently by  Bagchi and Datta \cite{BD2} and Murai and Nevo \cite{MN}, and counterexamples to over optimistic generalizations of the $g$-theorem. 
\end{abstract}

\section{Introduction}

In 1971 McMullen proposed a complete characterization of the $f$-vectors of simplicial polytopes which eventually become known as the $g$-conjecture \cite{McM71}.  When describing this conjecture in their 1971 book, {\it Convex Polytopes and the Upper Bound Conjecture},  McMullen and Shephard wrote, ``Even more intriguing, if rather less plausible, is the following conjecture proposed in [14]." (The reference [14] was a preprint for \cite{McM71}.) Less than ten years later the $g$-conjecture was a theorem.  Billera and Lee proved the sufficiency of the conditions \cite{BilleraLee} and Stanley proved their necessity \cite{St80}.  

At the end of the paper in which  McMullen proposed his $g$-conjecture he offered an opinion as to its applicability  to more general triangulated spheres  \cite{McM71},  ``However, there are fundamental differences between triangulated $(d-1)$-spheres and boundaries of simplicial $d$-polytopes.....We should therefore, perhaps, be wary of extending the conjecture to triangulated spheres." Thirty-five years after a positive resolution of McMullen's original $g$-conjecture for polytopes, the fate of the same $g$-conjecture for broader classes of spheres remains open.  Here we will survey several  attempts to extend the class of spaces to which the $g$-theorem might apply.  Enough time has passed since Stanley's original proof that many researchers, including this author, refer to  the possibility of proving the $g$-theorem for other spheres as a $g$-conjecture and McMullen's original proposal as the $g$-theorem.    Section \ref{manifold section} examines  results over the last ten years which point to a close relationship between the new $g$-conjecture(s) and $f$-vectors of  manifolds and  pseudomanifolds without boundary.   Sections \ref{PL-spheres} and \ref{small g2} consist of previously unpublished results concerning $g$-conjectures for PL-spheres from the point of view of bistellar moves, and results for homology spheres with few vertices or few edges.  The last two sections look at the possibility of extending $g$-conjectures to $l$-Cohen Macaulay complexes and how $g$-conjectures relate to balls and other manifolds with boundary, especially the recently introduced stacked manifolds of Bagchi and Datta \cite{BD2} and Murai and Nevo \cite{MN}.

Throughout $\Delta$ will be a connected $(d-1)$-dimensional pure simplicial complex.  All of our complexes will be finite.  We denote the geometric realization of $\Delta$ by $|\Delta|.$  The {\bf $f$-vector} of $\Delta$ is $( f_0, \dots, f_{d-1}),$ where $f_i$ is the number of $i$-dimensional faces in $\Delta.$ The vertices of $\Delta$ are $[1,f_0]=\{1, \dots, f_0\}.$ Usually we include the empty face in the $f$-vector data and set $f_{-1} = 1.$ The  face polynomial of $\Delta$ is 

$$f_\Delta(x) = f_{-1} x^d + f_0 x^{d-1} + \dots + f_{d-2} x + f_{d-1}.$$

The {\bf $h$-vector} of $\Delta$ is $(h_0, \dots, h_d)$ and is defined so that the corresponding $h$-polynomial, $h_\Delta(x) = h_0 x^d + h_1 x^{d-1} + \dots +h_{d-1} x +  h_d,$ satisfies $h_\Delta(x+1) = f_\Delta(x).$  Equivalently,

\begin{equation} \label{f by h}
  h_i = \sum^i_{j=0} (-1)^{i-j} \binom{d-j}{d-i} f_{j-1}.
\end{equation} 
 
Each $f_{i-1}$ is a   nonnegative linear combination of  $h_0, \dots, h_i.$  Specifically,
\begin{equation} \label{h by f}
  f_{i-1} = \sum^i_{j=0} \binom{d-j}{d-i} h_j.
\end{equation}

As the above equations demonstrate, $(f_{-1}, f_0, \dots, f_{i-1})$ contains exactly the same information as $(h_0, \dots, h_i).$  For instance, $h_d= (-1)^{d-1}\tilde{\chi}(|\Delta|)$ and $f_{d-1} = h_0+ \dots + h_d.$  There are many reasons to use the $h$-vector encoding of the $f$-vector.  In addition to the connection to the algebra we will see later, many formulas are more easily understood in the $h$-vector format.    For instance, the linear relations among the $f_i$ when $|\Delta|$ is a manifold are given by a simple formula due to Klee.   Klee's equations hold in the more general setting of semi-Eulerian complexes.  A pure $(d-1)$-dimensional simplicial complex $\Delta$ is {\bf semi-Eulerian} if for every nonempty face $\sigma$ of $\Delta$ the Euler characteristic of the link of $\sigma,~\lk \sigma,$ equals the Euler characteristic of $S^{d-|\sigma|-1},$ the $(d-|\sigma|-1)$-dimensional sphere.    As usual the link of $\sigma$ is $\lk \sigma = \{ \tau \in \Delta: \tau \cup \sigma \in \Delta, \tau \cap \sigma = \emptyset.\}$

 \begin{theorem}(Klee) \label{Klee} \cite{Klee}  
     Suppose $|\Delta|$ is semi-Eulerian.    Then
        \begin{equation} \label{klee}
        h_{d-i} - h_i =  (-1)^i \binom{d}{i} (\chi (\Delta) - \chi (S^{d-1})).
       \end{equation}
    \end{theorem}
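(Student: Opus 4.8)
The plan is to establish \eqref{klee} at the level of generating functions, using the semi-Eulerian hypothesis to reduce everything to Euler characteristics of links. First rewrite \eqref{h by f} (equivalently $h_\Delta(x+1)=f_\Delta(x)$) in the two forms
$$\sum_{i=0}^{d} h_i\,x^i = \sum_{\sigma\in\Delta} x^{|\sigma|}(1-x)^{d-|\sigma|}, \qquad \sum_{i=0}^{d} h_{d-i}\,x^i = \sum_{\sigma\in\Delta}(x-1)^{d-|\sigma|},$$
where in both sums $\sigma$ ranges over all faces of $\Delta$ including $\emptyset$, and the second identity comes from the first by $x\mapsto 1/x$ followed by clearing denominators (grouping faces by cardinality, both right-hand sides are $\sum_{j=0}^d f_{j-1}(\,\cdot\,)^{d-j}$ with $f_{-1}=1$). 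Subtracting and recalling $(x-1)^d=\sum_{i}\binom{d}{i}(-1)^{d-i}x^i$, it suffices to prove
$$\sum_{\sigma\in\Delta}\Big[\,x^{|\sigma|}(1-x)^{d-|\sigma|}-(x-1)^{d-|\sigma|}\,\Big] = (-1)^{d-1}\big(\chi(\Delta)-\chi(S^{d-1})\big)\,(x-1)^d,$$
since comparing coefficients of $x^i$ then gives $h_i-h_{d-i}=-(-1)^i\binom{d}{i}\big(\chi(\Delta)-\chi(S^{d-1})\big)$, which rearranges to \eqref{klee}.

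Next I would expand $x^{|\sigma|}=(1+(x-1))^{|\sigma|}$ by the binomial theorem inside the sum $\sum_{\sigma} x^{|\sigma|}(1-x)^{d-|\sigma|}$ and reorganize the resulting double sum by the cardinality $m$ of a subface $\rho\subseteq\sigma$; since $\binom{|\sigma|}{m}$ counts the $m$-subsets of $\sigma$, the coefficient of $(x-1)^{d-m}$ works out to $(-1)^{d}\sum_{|\rho|=m}\sum_{\sigma\supseteq\rho}(-1)^{|\sigma|}$. The inner sum is evaluated using the bijection $\sigma\leftrightarrow\sigma\setminus\rho$ between the faces of $\Delta$ containing $\rho$ and the faces of $\lk\rho$, together with the elementary identity $\sum_{\nu\in K}(-1)^{|\nu|}=-\tilde{\chi}(K)$ valid for any simplicial complex $K$ (empty face counted); this gives $\sum_{\sigma\supseteq\rho}(-1)^{|\sigma|}=-(-1)^{|\rho|}\,\tilde{\chi}(\lk\rho)$.

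This is where the hypothesis enters. For every nonempty face $\rho$, semi-Eulerianity gives $\tilde{\chi}(\lk\rho)=\tilde{\chi}(S^{d-|\rho|-1})=(-1)^{d-|\rho|-1}$, so for $m\ge 1$ the coefficient of $(x-1)^{d-m}$ collapses, after the sign bookkeeping, to exactly $f_{m-1}$; the lone term $\rho=\emptyset$, for which $\lk\emptyset=\Delta$, instead contributes $(-1)^{d+1}\tilde{\chi}(\Delta)$ as the coefficient of $(x-1)^{d}$. Hence $\sum_{\sigma}x^{|\sigma|}(1-x)^{d-|\sigma|}$ equals $(-1)^{d+1}\tilde{\chi}(\Delta)(x-1)^{d}+\sum_{m\ge 1}f_{m-1}(x-1)^{d-m}$, which differs from $\sum_{\sigma}(x-1)^{d-|\sigma|}=\sum_{m\ge 0}f_{m-1}(x-1)^{d-m}$ only in the $(x-1)^d$ coefficient, the discrepancy being $\big((-1)^{d+1}\tilde{\chi}(\Delta)-1\big)(x-1)^d$. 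Since $(-1)^{d+1}=(-1)^{d-1}=\tilde{\chi}(S^{d-1})$ and $\chi=\tilde{\chi}+1$, this discrepancy is precisely $(-1)^{d-1}\big(\chi(\Delta)-\chi(S^{d-1})\big)(x-1)^d$, which is exactly the identity required in the first step.

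I do not anticipate a genuine conceptual obstacle: the only delicate point is the sign bookkeeping in the binomial reorganization and in reading off the coefficient of $x^i$ from $(x-1)^d$. The conceptual content is just that in a semi-Eulerian complex every link of a \emph{nonempty} face is an Euler-characteristic sphere, so all of the local Dehn--Sommerville cancellations go through unchanged and exactly one global term, coming from the improper link $\lk\emptyset=\Delta$, survives; specializing to $\chi(\Delta)=\chi(S^{d-1})$ recovers the classical Dehn--Sommerville relations $h_i=h_{d-i}$ for Eulerian complexes.
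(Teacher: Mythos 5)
Your proof is correct. I checked the two generating-function identities $\sum_i h_i x^i=\sum_{\sigma\in\Delta}x^{|\sigma|}(1-x)^{d-|\sigma|}$ and $\sum_i h_{d-i}x^i=\sum_{\sigma\in\Delta}(x-1)^{d-|\sigma|}$, the reorganization of the first sum over pairs $\rho\subseteq\sigma$ giving the coefficient $(-1)^{d+m+1}\sum_{|\rho|=m}\tilde{\chi}(\lk\rho)$ for $(x-1)^{d-m}$, the collapse to $f_{m-1}$ for $m\ge 1$ under the semi-Eulerian hypothesis, the surviving $\rho=\emptyset$ term, and the final coefficient extraction from $(x-1)^d$; all the signs work out and yield exactly \eqref{klee}. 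Note that the paper itself gives no proof of this statement: it is quoted as Klee's classical 1964 result, so there is no in-paper argument to compare against. Your argument is a self-contained and standard-style derivation in the Dehn--Sommerville tradition (the key step being that $\sum_{\sigma\supseteq\rho}(-1)^{|\sigma|}=-(-1)^{|\rho|}\tilde{\chi}(\lk\rho)$, so the hypothesis on links of nonempty faces kills every local term and only the $\lk\emptyset=\Delta$ term records the deviation of $\chi(\Delta)$ from $\chi(S^{d-1})$), which is essentially the incidence-counting idea behind Klee's original proof recast in polynomial form. One small wording quibble: when you say both right-hand sides are ``$\sum_{j}f_{j-1}(\,\cdot\,)^{d-j}$,'' the first sum is really $\sum_j f_{j-1}x^j(1-x)^{d-j}$, so the parallel phrasing is loose, but your actual derivation of the second identity via $x\mapsto 1/x$ and clearing denominators is sound.
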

    
    The prototypical example of a semi-Eulerian complex is a triangulation of an $\field$-homology manifold.  Except where otherwise indicated, $\field$ will be an infinite field of arbitrary characteristic.  Homology and the corresponding Betti numbers will always be reduced homology with $\field$-coefficients.  A complex $\Delta$ is an $\field$-{\bf homology manifold} if the $\field$-homology of the link of every nonempty face $\sigma$ is isomorphic to the $\field$-homology of $S^{d-|\sigma|-1}.$  We say that $\Delta$ is an $\field$-{\bf homology manifold with boundary} if the link of every nonempty face $\sigma$ of $\Delta$ has the $\field$-homology of either $S^{d-|\sigma|-1}$ or $B^{d-|\sigma|-1}$ (the $(d-|\sigma|-1)$-dimensional ball) and the union of the empty set and the collection of faces which satisfy the latter condition, called the {\bf boundary faces}, form a $(d-2)$-dimensional $\field$-homology manifold.

From  (\ref{klee})  we see that if $|\Delta|$ is an $\field$-homology manifold and we know its Euler characteristic, then  the first half of the $h$-vector contains all of the $h$-vector (and hence $f$-vector) information.  So it is possible to encode the face numbers in the $g$-vector which is defined as
$(1, g_1, \dots, g_{\lfloor d/2 \rfloor})$ where $g_i = h_i - h_{i-1}.$ The $g$-theorem relates the $g$-vectors of boundaries of simplicial polytopes to M-vectors.  In order to define M-vectors we first introduce Macaulay pseudo-powers.  Given $a$ and $i$ positive integers there is a unique way to  write
$$a = \binom{a_i}{i} + \binom{a_{i-1}}{i-1} + \dots + \binom{a_j}{j},$$
with $a_i > a_{i-1} > \dots > a_j \ge j \ge 1.$

Define 
$$a^{<i>} = \binom{a_i+1}{i+1} + \binom{a_{i-1}+1}{i} + \dots + \binom{a_j+1}{j+1}.$$
\noindent In the literature,  $a^{<i>}$ is frequently called a (Macaulay) pseudo-power.  For convenience we define $0^{<i>} = 0$ for all $i.$  

\begin{theorem} (Macaulay) \cite{Mac} \label{Macaulay}
  Let $(h_0, \dots, h_d)$ be a sequence of nonnegative integers. Then        $(h_0, \dots, h_d)$ is the Hilbert function of a homogeneous quotient of a polynomial ring if and only if 
       $h_0=1$ and $h_{i+1} \le h^{<i>}_i$ for all $1 \le i \le d-1.$
    \end{theorem}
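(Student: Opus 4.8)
The plan is to prove both implications, the necessity direction carrying the bulk of the work. Suppose $h_i = \dim_\field(S/I)_i$ with $S = \field[x_1,\dots,x_n]$ and $I$ a homogeneous ideal. First I would reduce to the case that $I$ is a monomial ideal: fixing any term order and replacing $I$ by its initial ideal changes neither the ambient polynomial ring nor the Hilbert function, so we may assume $I$ is generated by monomials. (The hypothesis that $\field$ is infinite is not actually needed here: the Hilbert function is unchanged by extending the coefficient field, so the general case follows from the infinite one, and the monomial reduction works over any field.) Now $h_0 = \dim(S/I)_0 = 1$. For the growth condition, observe $I_{i+1} \supseteq S_1\cdot I_i$, hence
$$h_{i+1} \;=\; \dim S_{i+1} - \dim I_{i+1} \;\le\; \dim S_{i+1} - |S_1\cdot I_i|,$$
where $S_1\cdot I_i$ denotes the set of degree-$(i+1)$ monomials divisible by some degree-$i$ monomial of $I$. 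So it suffices to bound $|S_1\cdot I_i|$ from below in terms of $|I_i| = \dim S_i - h_i$. The precise claim is that over all sets $W$ of degree-$i$ monomials of a given cardinality the lexicographic segment minimizes the size of the shadow $S_1\cdot W$, and that for such a segment with $|W| = \dim S_i - h_i$ one has $|S_1\cdot W| = \dim S_{i+1} - h_i^{<i>}$ exactly; substituting yields $h_{i+1} \le h_i^{<i>}$.

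The shadow-minimization claim is the combinatorial heart of the theorem and the step I expect to be the main obstacle. I would prove it by a compression argument: given $W$, for each variable $x_k$ group the monomials of $W$ according to the exponent of $x_k$ they carry and, within each group, replace the collection of $x_k$-free cofactors by a lexicographic segment of the same size; one verifies that each such compression does not increase $|S_1\cdot W|$ and that iterating over all variables eventually leaves $W$ a genuine lexicographic segment, stable under further compression. The remaining ingredient --- computing $|S_1\cdot W|$ for a lexicographic segment and recognizing the answer as $\dim S_{i+1} - h_i^{<i>}$ --- is a routine if slightly fiddly manipulation of the Macaulay representation of $\dim S_i - h_i$ together with standard binomial identities. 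This compression lemma is the same combinatorial engine that underlies the Kruskal--Katona theorem; setting it up carefully (bounding the shadow under one compression, proving termination) is where the real effort lies.

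For sufficiency I would use the classical lex-ideal construction. Given $(h_0,\dots,h_d)$ with $h_0 = 1$ and $h_{i+1}\le h_i^{<i>}$ for all $i$, set $n = h_1$, $S = \field[x_1,\dots,x_n]$, and let $I_i \subseteq S_i$ be the span of the lexicographically largest $\dim S_i - h_i$ monomials of degree $i$ (this makes sense because $h_i \le \dim S_i$, which follows by induction from the growth conditions together with the monotonicity of $a \mapsto a^{<i>}$), with $I_i = S_i$ for $i > d$. To see that $I = \bigoplus_i I_i$ is an ideal one uses that the shadow of a lexicographic segment of degree $i$ is again a lexicographic segment of degree $i+1$ --- a form of the stability appearing in the compression argument --- so the inclusion $S_1\cdot I_i \subseteq I_{i+1}$ reduces to the numerical comparison $|S_1\cdot I_i| \le |I_{i+1}|$, which by the size computation above reads $\dim S_{i+1} - h_i^{<i>} \le \dim S_{i+1} - h_{i+1}$, i.e. exactly $h_{i+1} \le h_i^{<i>}$. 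By construction $\dim(S/I)_i = h_i$, so $(h_0,\dots,h_d)$ is realized. (One can instead phrase everything via order ideals of monomials and Macaulay inverse systems, but the lex-ideal construction is the most economical route.)
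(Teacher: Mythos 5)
The paper offers no proof of this theorem --- it is quoted from Macaulay's 1927 paper \cite{Mac} as a classical black box --- so there is no argument of the author's to compare yours against; I am judging the proposal on its own. Your overall architecture is the standard modern one and is sound: pass to the initial ideal to reduce to monomial ideals (this does preserve the Hilbert function and needs no hypothesis on the field), bound $h_{i+1}$ by the size of the complement of the shadow $S_1\cdot I_i$, invoke lex-segment shadow minimization together with the exact count $|S_1\cdot W|=\dim S_{i+1}-h_i^{<i>}$ for a lex segment with $|W|=\dim S_i-h_i$ (this count is correct and is precisely where the pseudo-power $h_i^{<i>}$ comes from), and realize an admissible sequence by the lex ideal, using that the shadow of a lex segment is again a lex segment so that closure under multiplication reduces to the numerical inequality $h_{i+1}\le h_i^{<i>}$. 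The sufficiency half as you describe it is complete.

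The genuine gap is inside your sketch of the key lemma: the assertion that iterating the variable-wise compressions ``eventually leaves $W$ a genuine lexicographic segment'' is false. A set can be compressed with respect to every variable without being a lex segment; for example, in $\field[x,y,z]$ with $x>y>z$, the set $W=\{x^2,xy,y^2\}$ is stable under all three compressions (within each exponent group of each variable the cofactors form a lex segment in the remaining two variables), yet the lex segment of size $3$ in degree $2$ is $\{x^2,xy,xz\}$. So the compression process terminates, but not necessarily at a lex segment, and your argument as written proves nothing about such terminal sets. The standard repair --- and the real content of Macaulay's combinatorial lemma, exactly as in Frankl's proof of Kruskal--Katona or in Clements--Lindstr\"om --- is to prove the shadow inequality \emph{directly for fully compressed sets}, by induction on the number of variables, using the structure that compression imposes (in the example above the shadow has size $7\ge 6$, so the bound still holds, but that needs its own argument). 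With that substitution the plan goes through; without it, the ``combinatorial heart'' you correctly identified is not actually proved.
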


\noindent  Any sequence $(h_0, \dots, h_d)$ which satisfies the above inequalities is called an  {\bf M-vector}.   

\begin{theorem} ($g$-theorem) \label{g-thm} \cite{St80}, \cite{McM93}, \cite{McM96}, \cite{BilleraLee}
Let $\Delta$ be the boundary of a simplicial $d$-polytope.  Then its $g$-vector is an M-vector.  Conversely, suppose $(1, g_1, \dots, g_{\lfloor d/2 \rfloor})$ is an M-vector. Then there exists a simplicial $d$-polytope such that the $g$-vector of its boundary equals $(1, g_1, \dots, g_{\lfloor d/2 \rfloor}).$
\end{theorem}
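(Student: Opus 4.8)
\emph{Overview.} The statement bundles together two results proved by entirely different methods, which is why it carries four citations: the necessity of the M-vector conditions is Stanley's theorem \cite{St80}, with a later convexity-theoretic proof by McMullen \cite{McM93,McM96}, while the sufficiency is the explicit polytope construction of Billera and Lee \cite{BilleraLee}. I would prove the two halves separately, and I expect the necessity half to contain essentially all of the difficulty.

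\emph{Necessity.} The plan is to pass to commutative algebra via the Stanley--Reisner ring $\field[\Delta]$ over the infinite field $\field$. Since $\Delta$ triangulates a sphere it is Cohen--Macaulay, so a generic choice of linear forms $\theta_1,\dots,\theta_d$ is a linear system of parameters and the Artinian reduction $A=\field[\Delta]/(\theta_1,\dots,\theta_d)$ is a standard graded $\field$-algebra with $\dim_\field A_i=h_i$ for all $i$. The crucial extra ingredient, available exactly because $\Delta$ is the boundary of a \emph{polytope}, is the strong Lefschetz property: there is a linear form $\omega\in A_1$ for which $\times\,\omega^{\,d-2i}\colon A_i\to A_{d-i}$ is an isomorphism whenever $i\le d/2$. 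Stanley derived this from the hard Lefschetz theorem for the projective toric variety attached to a rational realization of the polytope — a rational realization with the same face lattice exists since the $h$-vector is purely combinatorial, and the toric variety of a simplicial polytope is a rational homology manifold, so ordinary rational cohomology satisfies hard Lefschetz — and McMullen later obtained the same conclusion by working inside the polytope algebra. Granting such an $\omega$, the multiplication map $\times\,\omega\colon A_{i-1}\to A_i$ is injective for $i\le\lfloor d/2\rfloor$, so $\dim_\field(A/\omega A)_i=h_i-h_{i-1}=g_i$ in that range. But $A/\omega A$ is a homogeneous quotient of a polynomial ring, so by Macaulay's theorem (Theorem~\ref{Macaulay}) its Hilbert function is an M-vector; since a prefix of an M-vector is again an M-vector, $(1,g_1,\dots,g_{\lfloor d/2\rfloor})$ is an M-vector.

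\emph{Sufficiency.} Here I would follow Billera and Lee. Given an M-vector $(1,g_1,\dots,g_{\lfloor d/2\rfloor})$, first apply the ``if'' direction of Macaulay's theorem to obtain an order ideal of monomials (``multicomplex'') $M$ in $g_1$ variables having exactly $g_i$ monomials of each degree $i\le\lfloor d/2\rfloor$; that the M-vector inequalities hold is precisely what makes the compressed (lexicographic-initial) family of this size closed under divisibility. The task is then to realize $M$ as the profile of restriction sets of a shelling of the boundary of a simplicial $d$-polytope. Billera and Lee do this by constructing an explicit $d$-polytope $P$ on $g_1+d+1$ vertices (the number forced by the value of $g_1$), realized using points on the moment curve so that $\partial P$ inherits a line shelling, and arranged so that the restriction sets of that shelling contain exactly $g_i$ sets of size $i$ for $i\le\lfloor d/2\rfloor$; the starting point is the cyclic polytope $C(d,g_1+d+1)$, of which $P$ is a controlled modification. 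One then reads $h_i(\partial P)$ off the shelling, and the Dehn--Sommerville relations of Theorem~\ref{Klee} (with $\chi(\partial P)=\chi(S^{d-1})$) recover the entire $h$-vector, so that $g_i(\partial P)=g_i$ as desired.

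\emph{Main obstacle.} Everything in the argument except one step is bookkeeping with Hilbert functions, Dehn--Sommerville, Macaulay's theorem, and shellings; the genuine content is the existence of the Lefschetz element $\omega$ in the necessity direction. This is exactly the point at which the proof must leave combinatorics — into the algebraic geometry of toric varieties for Stanley, into the polytope algebra for McMullen — and it is also exactly why the proof does not transfer: a non-polytopal simplicial sphere has neither an associated toric variety nor a polytope algebra, which is precisely why the $g$-conjecture for larger classes of spheres, the theme of the remainder of this paper, is still open. On the sufficiency side the only genuine subtlety is that a shellable sphere need not be polytopal, so one really must produce the explicit convex realization of the Billera--Lee complexes rather than merely verify that they are shellable spheres.
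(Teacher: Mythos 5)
Your proposal is correct and follows essentially the same route the paper takes: necessity via a Lefschetz element for $\field(\Delta)$ (Stanley's toric/hard Lefschetz argument, McMullen's convex-geometric alternative) giving injectivity of $\cdot\omega$ below the middle degree so that the $g$-vector is the Hilbert function of $\field(\Delta)/(\omega)$ and hence an M-vector by Macaulay's theorem, with sufficiency by the Billera--Lee construction. The paper itself only cites these results and sketches exactly this outline, so there is nothing to add.
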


Billera and Lee  established that for every M-vector there was a simplicial polytope with the given $g$-vector \cite{BilleraLee}.  Stanley's proof of necessity used complicated and difficult algebraic geometry \cite{St80}.  Later, McMullen  gave another proof using complicated and difficult convex geometry \cite{McM93}, \cite{McM96}.   The idea behind both proofs was to find a ring whose Hilbert function equals the $g$-vector of the polytope.   

The {\bf face ring} or {\bf Stanley-Reisner} ring of $\Delta$ is the polynomial ring $R=\field [x_1, \dots, x_{f_0}]$ modulo the {\bf face ideal}, $I_\Delta.$  The face ideal is generated by the monomials corresponding to the nonfaces of $\Delta.$  Specifically, 

$$I_\Delta = <x_{i_1} \cdot  \dots \cdot  x_{i_m}: \{i_1, \dots, i_m\} \notin \Delta.>$$

We use $\field[\Delta]$ for  the face ring of $\Delta$.  Let $\{\theta_1, \dots, \theta_d\}$ be a set of $d$ one-forms in the polynomial ring $R$ and let $\Theta$ be the ideal they generate.  Then $\{\theta_1, \dots, \theta_d\}$ is called a {\bf linear system of parameters}, or l.s.o.p., for $\field[\Delta]$ if $\field[\Delta]/\Theta$ is finite-dimensional as a vector space over $\field.$ Even though  the quotient $\field[\Delta]/\Theta$  depends on the choice of l.s.o.p., we denote it by $\field(\Delta).$ As long as $\field$ is infinite,  generic choices of $\{\theta_1, \dots, \theta_d\}$ are a l.s.o.p. for $\field[\Delta].$ Since $\Theta$ and $I_\Delta$ are homogeneous ideals, $\field(\Delta)$ is a graded ring.  We write the component of degree $i$ of $\field(\Delta)$ as $\field(\Delta)_i.$  The reason for introducing these ideas is  Schenzel's formula which relates the topology of $|\Delta|$, its $h$-vector, and the Hilbert function of $\field(\Delta)$ whenever $|\Delta|$ is an $\field$-homology manifold (with or without boundary).

\begin{theorem}  (Schenzel's formula) \cite{Sch}
Suppose $|\Delta|$ is an  $\field$-homology manifold.  Let $\{\theta_1, \dots, \theta_d\}$ be a l.s.o.p. for $\field[\Delta].$  Then 
$$\dim_\field \field(\Delta)_i = h_i(\Delta) + \binom{d}{i} \sum^{i-2}_{j=0} (-1)^{j-i} \beta_j.$$
 \end{theorem}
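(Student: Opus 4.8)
\smallskip\noindent The plan is to deduce the formula from a general Hilbert--function identity for Buchsbaum modules, obtained by killing the l.s.o.p.\ one linear form at a time and tracking the resulting finite-length ``error terms'' with binomial coefficients.  First I would record two standard facts.  Since $|\Delta|$ is an $\field$-homology manifold, the link of every nonempty face is an $\field$-homology sphere, hence Cohen--Macaulay, and by the Reisner--Schenzel criterion $\field[\Delta]$ is then a Buchsbaum $R$-module.  Computing the graded local cohomology $H^i_{\mathfrak m}(\field[\Delta])$ by Hochster's formula, for $i<d$ the only face that can contribute is the empty one: for $\sigma\neq\emptyset$ the reduced cohomology of $\lk\sigma\simeq_\field S^{d-|\sigma|-1}$ is concentrated in degree $d-|\sigma|-1$, which forces $i=d$.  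Hence, for $i<d$, $H^i_{\mathfrak m}(\field[\Delta])$ is concentrated in internal degree $0$ with $\dim_\field H^i_{\mathfrak m}(\field[\Delta])=\dim_\field\tilde H^{i-1}(|\Delta|;\field)=\beta_{i-1}$; in particular $H^0_{\mathfrak m}(\field[\Delta])=0$.

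Next I would prove the key one-step reduction.  Let $N$ be a graded Buchsbaum $R$-module of Krull dimension $e\geq 1$ and $\theta$ a linear form belonging to an l.s.o.p.\ of $N$.  Using that a Buchsbaum module is quasi-Buchsbaum (so $\mathfrak m H^j_{\mathfrak m}(N)=0$ for $j<e$, hence $\theta$ annihilates these) together with the weak-sequence property (which gives $0:_N\theta\subseteq 0:_N\mathfrak m$), one obtains $0:_N\theta=\soc N=H^0_{\mathfrak m}(N)$, so there is a graded exact sequence
\begin{equation*}
0\longrightarrow H^0_{\mathfrak m}(N)(-1)\longrightarrow N(-1)\xrightarrow{\theta} N\longrightarrow N/\theta N\longrightarrow 0,
\end{equation*}
giving the Hilbert-function recursion $\dim_\field(N/\theta N)_i=\dim_\field N_i-\dim_\field N_{i-1}+\dim_\field(H^0_{\mathfrak m}(N))_{i-1}$.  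Splitting this four-term sequence into two short ones and running the long exact sequences in local cohomology, the vanishing $\mathfrak m H^j_{\mathfrak m}(N)=0$ ($j<e$) kills the relevant connecting maps, yielding short exact sequences of graded vector spaces $0\to H^j_{\mathfrak m}(N)\to H^j_{\mathfrak m}(N/\theta N)\to H^{j+1}_{\mathfrak m}(N)(-1)\to 0$ for $0\leq j\leq e-2$; and, by standard Buchsbaum-module theory, $N/\theta N$ is again Buchsbaum, of dimension $e-1$.

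To finish, I would iterate.  Put $M_k=\field[\Delta]/(\theta_1,\dots,\theta_k)$.  Repeated use of the previous step together with Pascal's rule gives $H^0_{\mathfrak m}(M_k)\cong\bigoplus_{p=0}^{k}H^p_{\mathfrak m}(\field[\Delta])(-p)^{\oplus\binom{k}{p}}$ as graded vector spaces, so by the first paragraph $\dim_\field(H^0_{\mathfrak m}(M_k))_i=\binom{k}{i}\beta_{i-1}$.  Feeding this into the recursion yields
\begin{equation*}
\dim_\field(M_{k+1})_i=\dim_\field(M_k)_i-\dim_\field(M_k)_{i-1}+\binom{k}{i-1}\beta_{i-2}.
\end{equation*}
Since the Hilbert series of $M_0=\field[\Delta]$ is $\bigl(\sum_j h_jt^j\bigr)/(1-t)^d$, the ``$(1-t)$'' part of the recursion, applied $d$ times, contributes exactly $h_i$ in degree $i$, while the accumulated error terms $\binom{k}{i-1}\beta_{i-2}$, each pushed through $d-1-k$ further reductions, combine into $\binom{d}{i}\sum_{j=0}^{i-2}(-1)^{i-j}\beta_j$ via the binomial identity $\sum_{\ell\leq r}(-1)^{\ell}\binom{d}{\ell}=(-1)^{r}\binom{d-1}{r}$ (the same one that relates the $f$- and $h$-vectors).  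This is the asserted formula, with $\dim_\field\field(\Delta)_i=\dim_\field(M_d)_i$.

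I expect the main obstacle to be the reduction step of the second paragraph: showing that for a Buchsbaum module and a parameter the connecting homomorphisms in local cohomology vanish, so that $H^\bullet_{\mathfrak m}(N/\theta N)$ really is the displayed extension of shifts of $H^\bullet_{\mathfrak m}(N)$.  This is precisely the place where being a homology manifold --- equivalently, here, $\field[\Delta]$ being Buchsbaum --- is used rather than merely Cohen--Macaulay; everything downstream is binomial bookkeeping.  Alternatively, one could simply invoke Schenzel's general Hilbert-series formula for reductions of Buchsbaum modules by a system of parameters and carry out only the first and last steps.
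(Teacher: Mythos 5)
Your argument is correct, and it is essentially the proof the paper points to: the paper states Schenzel's formula without proof, citing \cite{Sch} as "a special case of Schenzel's work on Buchsbaum complexes," and your route --- Buchsbaumness of $\field[\Delta]$ via Reisner's criterion on links, Hochster's formula showing $H^j_{\mathfrak m}(\field[\Delta])$ ($j<d$) is concentrated in internal degree $0$ with dimension $\beta_{j-1}$, the kernel/cokernel recursion for killing one parameter, and the short exact sequences $0\to H^j_{\mathfrak m}(N)\to H^j_{\mathfrak m}(N/\theta N)\to H^{j+1}_{\mathfrak m}(N)(-1)\to 0$ --- is exactly Schenzel's Buchsbaum-module argument. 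One small correction to the bookkeeping: the identity that collapses the accumulated error terms to $\binom{d}{i}\sum_{j=0}^{i-2}(-1)^{i-j}\beta_j$ is the Vandermonde-type convolution $\sum_{k}\binom{k}{j+1}\binom{d-1-k}{i-j-2}=\binom{d}{i}$, not the alternating partial-sum identity you name, though this does not affect the validity of the proof.
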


The above formula is a special case of Schenzel's work on Buchsbaum complexes \cite{Sch}.  Schenzel's formula is a generalization of the original work of Reisner \cite{Rei} and Stanley \cite{St77} on Cohen-Macaulay complexes, of which we will say more in Section \ref{k-cm}.

For $\field$-homology spheres all of the relevant Betti numbers are zero, hence $\dim_\field \field(\Delta)_i = h_i(\Delta).$ The key fact in the proof of the $g$-theorem is the existence of Lefschetz elements for $\field(\Delta).$  A one-form $\omega$ in $R$ is a {\bf Lefschetz} element for $\field(\Delta)$ if multiplication 
$$\cdot \omega^{d-2i}: \field(\Delta)_i \to \field(\Delta)_{d-i}$$
is an isomorphism for all $i \le  d/2.$

\begin{theorem} \cite{St80}, \cite{McM93}, \cite{McM96}
If $\Delta$ is the boundary of a simplicial $d$-polytope, then there exists a l.s.o.p. $\{\theta_1, \dots, \theta_d\}$ and a Lefschetz element $\omega$ for $\C(\Delta).$ 
\end{theorem}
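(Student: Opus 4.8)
The plan is to recognize $\C(\Delta)$ as (a complexification of) the rational cohomology ring of a projective toric variety and then to invoke the hard Lefschetz theorem. Write $\Delta = \partial P$ for a simplicial $d$-polytope $P$. Since being simplicial is an open condition on the vertex coordinates, I would first replace $P$ by a \emph{rational} simplicial $d$-polytope with the same face lattice --- hence the same boundary complex $\Delta$ and the same $h$-vector --- and translate so that the origin lies in the interior of $P$.

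Let $\Sigma$ be the (inner) normal fan of $P$ and $X = X_\Sigma$ the associated projective toric variety. Because $P$ is simplicial, $\Sigma$ is a complete simplicial rational fan, so $X$ is a complete, $\Q$-factorial (orbifold) projective variety of complex dimension $d$; in particular it is a rational homology manifold and $H^{\mathrm{odd}}(X;\Q)=0$. Identifying the vertex set of $\Delta$ with the set of primitive ray generators $u_v$ of $\Sigma$, the Danilov--Jurkiewicz presentation gives a graded ring isomorphism
$$H^*(X;\Q) \;\cong\; \Q[\Delta]/\Theta, \qquad \Theta=(\theta_1,\dots,\theta_d),\quad \theta_j=\sum_v \langle e_j,u_v\rangle\, x_v,$$
where $e_1,\dots,e_d$ is a basis of the dual lattice; finiteness of the quotient (a consequence of the compactness of $X$) says precisely that $\{\theta_1,\dots,\theta_d\}$ is a l.s.o.p.\ for $\Q[\Delta]$. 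With the paper's degree convention, the doubling of degrees in cohomology yields $\Q(\Delta)_i \cong H^{2i}(X;\Q)$, whose dimension is $h_i(\Delta)$ by the standard toric computation of Betti numbers.

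Since $P$ is a genuine polytope, its support function is strictly convex, so the corresponding torus-invariant divisor is ample; its class is a degree-$2$ element $\omega\in H^2(X;\Q)$, represented by a one-form $\sum_v a_v x_v$. Now I would apply the hard Lefschetz theorem to $X$: although $X$ may be singular, it is $\Q$-smooth, so its rational cohomology agrees with its intersection cohomology, and the hard Lefschetz theorem holds for the intersection cohomology of any complex projective variety (alternatively one invokes orbifold Hodge theory directly on $X$). Thus $\cdot\,\omega^{d-2i}\colon H^{2i}(X;\Q)\to H^{2(d-i)}(X;\Q)$ is an isomorphism for every $i\le d/2$. Transporting this across the isomorphism $\Q(\Delta)\cong H^*(X;\Q)$ and extending scalars from $\Q$ to $\C$ --- which preserves both the l.s.o.p.\ property and the isomorphisms --- produces the required l.s.o.p.\ $\{\theta_1,\dots,\theta_d\}$ together with a Lefschetz element $\omega$ for $\C(\Delta)$.

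The entire weight of the argument sits in the hard Lefschetz input; the remaining ingredients --- the reduction to a rational polytope, the Danilov--Jurkiewicz presentation, and the identification of the Betti numbers of $X$ with the $h$-numbers of $\Delta$ --- are standard, if not entirely trivial, toric bookkeeping. It is worth noting an alternative route, due to McMullen, which avoids algebraic geometry altogether: one replaces $H^*(X;\R)$ by the polytope algebra (or weight algebra) built directly from $P$, shows its graded pieces have dimensions $g_i(\Delta)$, and derives the Lefschetz isomorphisms from the Alexandrov--Fenchel inequalities for mixed volumes --- trading one deep theorem for another of comparable depth.
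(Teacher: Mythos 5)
Your proposal is exactly Stanley's toric-variety argument, which is the proof the paper itself relies on by citation: pass to a rational realization, use the Danilov--Jurkiewicz presentation to identify $\Q(\Delta)$ with $H^*(X_\Sigma;\Q)$, and take the ample class as the Lefschetz element via hard Lefschetz for the $\Q$-smooth projective toric variety. The outline is correct, and your closing remark about McMullen's polytope-algebra alternative matches the paper's other cited route.
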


In Stanley's toric variety proof the choice of l.s.o.p.  depends on a geometric embedding of the polytope with rational vertices. In McMullen's proof both the l.s.o.p. and the Lefschetz element depend on the polytope data.  In either case,  once there exists a l.s.o.p. and a Lefschetz element, generic choices for either work.  Here and throughout the rest of the paper generic will always mean `within a nonempty open Zariski set of $\field^N$', where $N$ depends on the setting.  For instance, for choices of a l.s.o.p for $\Delta,~N$ is $f_0d.$   In general we will say that $\field[\Delta]$ has Lefschetz elements whenever for generic choices of l.s.o.p. and one-form $\omega$ the latter is a Lefchetz element for $\field(\Delta).$ To finish the proof of the necessity of the $g$-conditions one observes that in order for $\cdot \omega: \field(\Delta)_i \to \field(\Delta)_{d-i}$ to be an isomorphism, multiplication by $\omega,$
$$\cdot \omega: \field(\Delta)_i \to \field(\Delta)_{i+1}$$ must be an injection whenever $i < d/2.$  Hence the Hilbert function of $\field(\Delta)/(\omega)$ equals the $g$-vector of $\Delta$ for $i \le d/2.$

There are potentially many ways to  extend the $g$-theorem to larger classes of spheres.  For sphere-like spaces the most optimistic version is to hope that $\field[\Delta]$ has Lefschetz elements whenever $|\Delta|$ is a rational homology sphere.  There are, however, many classes of spheres between boundaries of simplicial polytopes and rational homology spheres.  These include shellable, locally collapsible, PL, integral homology, and simplicial, to name just a few. 

The full strength of Lefschetz elements is not necessary to establish that the $g$-vector is an M-vector.  A common weaker requirement involves the existence of weak Lefschetz elements.  A one-form $\omega \in R_1$ is a {\bf weak Lefschetz} element for $\field[\Delta]$ if multiplication $\cdot \omega: \field(\Delta)_i \to \field(\Delta)_{i+1}$ is either an injection or surjection for all $0 \le i \le d-1.$  When $\Delta$ is an $\field$-homology sphere, the Gorenstein property of $\field[\Delta]$ implies that it is sufficient to check that multiplication is surjective for $i=d/2$ when $d$ is even and $i=(d-1)/2$ when $d$ is odd.    As is the case for Lefschetz elements, the existence of one such l.s.o.p. and one-form implies that  generic choices of both work. Whether or not it is possible for $\field[\Delta]$ to have weak Lefschetz elements, but not Lefschetz elements is, as of now, unclear.  See \cite[Section 4]{BN} for a discussion.

 In addition, it might be possible to prove that $g$-vectors are M-vectors, or at least nonnegative,  without resorting to the face ring.  In his 1970 paper \cite{Wal}, before the connection between $f$-vectors and face rings was known, Walkup proved that arbitrary three-manifolds without boundary satisfy $g_2 \ge 0$ and that $g_2$ is also nonnegative when $|\Delta|$ is a four-manifold without boundary whose  Euler characteristic is at most two.   At present it is unknown whether or not $\field(\Delta)$ has Lefschetz elements when  $|\Delta|$ is a three sphere.  When $|\Delta|$ is  three-manifold with nontrivial $\beta_1,~\field(\Delta)$ never has Lefschetz elements.  However, it does have weak Lefschetz elements.  It remains an open problem whether or not triangulations of four-spheres have weak Lefschetz elements.   A four-dimensional manifold with $\beta_1 > 0$ and $\beta_1 +\chi \ge 2$  does not even have weak Lefschetz elements. In Section \ref{k-cm} we will meet a class of complexes whose $h$-vectors are known to be unimodal, even log-concave, but do not have weak Lefschetz elements.  We will also discuss a new approach to the nonnegativity of the $g$-vector which does not a priori involve the face ring.   See the discussion after Corollary \ref{beta1}.

One interesting class of PL-spheres for which it is known that there exits Lefschetz elements for $\field[\Delta]$ are  strongly edge decomposable (s.e.d.) spheres.   Roughly speaking, a simplicial sphere is s.e.d. if is possible to repeatedly contract edges until you reach the boundary of the simplex.  Introduced by Nevo \cite{Nev2}, where he demonstrated that the $g$-vector is nonnegative, strongly edge decomposable spheres are PL-spheres.  The existence of Lefschetz elements for $\field[\Delta]$ in characteristic zero when $\Delta$ is an s.e.d. sphere   was proved by Babson and Nevo \cite{BN}, and in arbitrary characteristic by Murai \cite{Mur}.  How strongly edge decomposable spheres compare to other classes of spheres is not yet understood.  Novikov's proof that in dimensions five and above  there is no algorithm to recognize PL-spheres   implies that s.e.d. spheres cannot possibly contain all PL-spheres in dimensions five and above.  

For a different approach to establishing the existence of weak Lefschetz elements for face rings of PL-spheres see Remark \ref{KS} below.  

Another collection of spheres for which the conclusion of the $g$-theorem is known to hold are several classes related to barycentric subdivision.  A poset $P$ is Gorenstein* if its order complex $|P|$  is a rational homology sphere.  The $g$-vector of the order complex of a Gorenstein* poset $P$ can be written as a nonnegative linear combination of the coefficients of the ${\mathbf{cd}}$-index of  $P$.  So Karu's proof that these latter coefficients are nonnegative \cite{Kar} implies that the $g$-vector of order complexes of Gorenstein* complexes is also nonnegative.   The prototypical example of such a $P$ is the face poset of a rational homology sphere $\Delta$.  In this case $|P|$ is the first barycentric subdivision of $\Delta$ and  Kubitzke and Nevo showed that the $g$-vector of $|P|$ is also an M-vector \cite{KN}.  The Kubitzke-Nevo result is a special case of their main result which concerns barycentric subdivisions of Cohen-Macaulay complexes. Brenti and Welker had previously shown that the $h$-vector barycentric subdivision of any Cohen-Macaulay complex was log concave by proving the $h$-polynomial had only real zeroes \cite{BW}. Instead of proving that $\field[\Delta]$ has weak Lefschetz elements, Kubitzke and Nevo prove that shellable complexes with the same $f$-vector as $\Delta$ have weak Lefschetz elements. These results were  later extended by Murai and Yanagawa to order complexes of posets whose downsets have Lefschetz elements \cite{MY}.

  So far the best result toward establishing the existence of weak Lefschetz elements for rational homology spheres is the following, sometimes called the rigidity inequality.    A {\bf normal pseudomanifold} is a pure simplicial complex such that the links of all faces of codimension-two or greater are connected, and whose codimension-one faces are in exactly two facets.   Any triangulation of an $\field$-homology manifold will be a normal pseudomanifold.  

\begin{theorem}  \label{rigidity}(Rigidity inequality) \cite{Ka87}, \cite{Lee}
Let $\Delta$ be a normal pseudomanifold and $d \ge 4.$ If $\field$ is an infinite field, then for generic choices of l.s.o.p. $\Theta$ and  $\omega \in R_1,$
\begin{enumerate}
  \item
   $\dim_\field \field(\Delta)_1 = h_1(\Delta).$
  \item \label{rigidity h2}
    $\dim_\field \field(\Delta)_2 = h_2(\Delta).$
  \item \label{rigidity h3}
   $\dim_\field \field(\Delta)_3 \ge h_3(\Delta).$
  \item
   $\cdot \omega: \field(\Delta)_1 \to \field(\Delta)_2$ is injective.
\end{enumerate}

\end{theorem}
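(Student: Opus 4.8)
\medskip
\noindent The plan is to translate the four assertions into statements about the infinitesimal rigidity of the graph $G=G(\Delta)$ (the $1$-skeleton of $\Delta$), and then to supply one genuinely geometric input: the graph of a normal pseudomanifold of dimension $\ge 2$ is generically rigid in the ambient Euclidean space. Fix a generic l.s.o.p.\ $\Theta=(\theta_1,\dots,\theta_d)$, which we read as a generic placement $p$ of the $f_0$ vertices of $\Delta$ in $\R^d$ (the $\theta_k$ being the coordinate functions of $p$), together with a generic linear form $\omega$. Assertion (1) uses nothing about $\Delta$ beyond its dimension: $\field[\Delta]_1$ is the free space $\field^{f_0}$, and $d\le f_0$ generic linear forms are linearly independent there, so $\dim_\field\field(\Delta)_1=f_0-d=h_1$.

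The engine for (2) and (4) is the dictionary between the bottom of $\field(\Delta)$ and rigidity, due in this setting to Kalai \cite{Ka87} and to Lee \cite{Lee} (and, in the polytopal case, to McMullen and to Tay--White--Whiteley); for it to apply one uses that a normal pseudomanifold has $\field[\Delta]$ satisfying Serre's condition $(S_2)$, an elementary consequence of the connectedness of the links of faces of codimension $\ge 2$. The dictionary says: the cokernel $\field(\Delta)_2/\omega\field(\Delta)_1$ is the space of affine $2$-stresses of $(G,p)$, of dimension $f_1-\rk R(G,p)$ with $R(G,p)$ the rigidity matrix, and the kernel of $\cdot\omega\colon\field(\Delta)_1\to\field(\Delta)_2$ is the space of nontrivial infinitesimal flexes of $(G,p)$ in $\R^d$. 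So if $(G,p)$ is generically $d$-rigid, then $\rk R(G,p)$ attains its maximum $df_0-\binom{d+1}{2}$: the flexes reduce to the trivial Euclidean motions, forcing $\cdot\omega$ injective, which is (4); and the stress space then has dimension $f_1-df_0+\binom{d+1}{2}=g_2$, so $\dim_\field\field(\Delta)_2=\dim_\field\field(\Delta)_1+g_2=h_1+g_2=h_2$, which is (2). Thus (2) and (4) are reduced to a single assertion: $G(\Delta)$ is generically $d$-rigid.

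This is the crux. A normal pseudomanifold of dimension $d-1$ is a minimal simplicial cycle---it carries a fundamental $(d-1)$-cycle (over $\Z/2$, say) and, being strongly connected, no proper subcomplex does---so Fogelsanger's theorem on the generic rigidity of minimal cycles gives exactly that $G(\Delta)$ is generically rigid in $\R^d$, valid as soon as $d-1\ge 2$; since $d\ge 4$ this applies. (Alternatively one can run Kalai's original induction: the vertex links are again normal pseudomanifolds or homology spheres, of dimension $d-2$, so by induction their graphs are generically rigid in $\R^{d-1}$; by Whiteley's coning theorem each vertex star is then rigid in $\R^d$, and one assembles these frameworks along their large pairwise overlaps, the base case being the rigidity of $2$-dimensional complexes in $\R^3$.)

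Finally, (3) is weaker and, unlike (4), needs only the $(S_2)$ property. Writing $\dim_\field\field(\Delta)_3=\dim_\field\field[\Delta]_3-d\cdot\dim_\field\field[\Delta]_2+\dim_\field\ker\mu_2$, where $\mu_2\colon\Theta_1\otimes\field[\Delta]_2\to\field[\Delta]_3$, the Koszul syzygies among the $\theta_kx_i$ span a subspace of $\ker\mu_2$ of dimension $\binom{d}{2}f_0-\binom{d}{3}$---the count being as stated precisely because $(S_2)$ leaves no room for degree-two relations among the $\theta_kx_i$ beyond the trivial ones---and substituting $\dim_\field\field[\Delta]_3=f_0+2f_1+f_2$, $\dim_\field\field[\Delta]_2=f_0+f_1$ and $h_3=f_2-(d-2)f_1+\binom{d-1}{2}f_0-\binom{d}{3}$ yields $\dim_\field\field(\Delta)_3\ge h_3$; equivalently this is the degree-$3$ shadow of Schenzel's formula \cite{Sch}, whose correction term---here governed by reduced cohomology of the vertex links---is nonnegative. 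The main obstacle is the generic $d$-rigidity of normal pseudomanifolds, Fogelsanger's theorem being a substantial result, together with the geometric half of the dictionary in the second paragraph; a secondary subtlety is that rigidity is classically phrased over $\R$ whereas the statement is for an arbitrary infinite field $\field$, handled by noting that the relevant matroid ranks are certified by non-vanishing integer minors valid in every characteristic, as secured in \cite{Lee}.
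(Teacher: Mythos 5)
Your route --- the Lee--Kalai dictionary between the bottom of $\field(\Delta)$ and infinitesimal rigidity, plus generic $d$-rigidity of $1$-skeleta of normal pseudomanifolds via Fogelsanger (or Kalai's coning-and-gluing induction) --- is exactly what the paper's two-line proof points to with its citations of \cite{Ka87} and \cite{Lee}, so for items (1), (2), (4) in characteristic zero you are reconstructing the intended argument. Two steps, however, have genuine gaps. The first is item (3). Your arithmetic is right: the inequality follows once the Koszul syzygies span a subspace of $\ker\mu_2$ of dimension $\binom{d}{2}f_0-\binom{d}{3}$. But the claim that this count follows ``because $(S_2)$ leaves no room for degree-two relations beyond the trivial ones'' is precisely what needs proof: the linear relations among the spanning elements $\theta_j\otimes\theta_kx_v-\theta_k\otimes\theta_jx_v$ are governed by the kernel of the same map $\field^d\otimes\field[\Delta]_1\to\field[\Delta]_2$ that rigidity controls, so the count is not a formal consequence of $(S_2)$ and you give no argument for it. Your fallback via ``the degree-$3$ shadow of Schenzel's formula'' is also unavailable: Schenzel's formula \cite{Sch} requires Buchsbaumness, and a normal pseudomanifold need not be Buchsbaum (the suspension of a torus is a normal $3$-pseudomanifold whose suspension-vertex links are tori). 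The cheap repair is to cut by the generic forms one at a time: writing $\kappa^{(k)}_j$ for the kernel dimension of multiplication by $\theta_{k+1}$ in degree $j$ on $\field[\Delta]/(\theta_1,\dots,\theta_k)$, a short Hilbert-series bookkeeping gives $\dim_\field\field(\Delta)_2=h_2+\sum_k\kappa^{(k)}_1$ and $\dim_\field\field(\Delta)_3=h_3+\sum_k\bigl(\kappa^{(k)}_2-(d-1-k)\kappa^{(k)}_1\bigr)$, using $\kappa^{(k)}_0=0$. Item (2) forces every $\kappa^{(k)}_1=0$, and then (3) follows because the remaining corrections $\kappa^{(k)}_2$ are nonnegative; so (3) is a consequence of (2), i.e.\ of rigidity, not of $(S_2)$ alone.

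The second gap is the field. The theorem is stated for an arbitrary infinite field, and your disposal of this --- ``the relevant matroid ranks are certified by non-vanishing integer minors valid in every characteristic, as secured in \cite{Lee}'' --- rests on a false principle: a nonzero integer minor can perfectly well vanish modulo $p$, there is no a priori reason the generic rank of the rigidity matrix cannot drop in positive characteristic, and \cite{Lee} works over the reals. What is actually needed is that Fogelsanger's argument and the stress dictionary go through over any infinite field; this is exactly what the paper delegates to the discussion and references in \cite[Section 5]{NovSw3}, and your proof must either reproduce that material or cite it rather than the integer-minor argument.
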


\begin{proof}
In characteristic zero this follows easily from the work of Kalai \cite{Ka87} and Lee \cite{Lee}.  For nonzero characteristic see the discussion and references in \cite[Section 5]{NovSw3}.
\end{proof}

 Since any two-dimensional sphere is combinatorially the boundary of a $3$-polytope, the $g$-theorem holds for all such spheres.  The rigidity inequality is enough to show that the $g$-theorem also holds for all spheres of dimension three or four.  

\begin{corollary}
 If $\Delta$ is a three or four-dimensional $\field$-homology sphere, then  $(1, g_1, g_2)$ is an M-vector.
\end{corollary}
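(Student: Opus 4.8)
The plan is to deduce the claim from the rigidity inequality (Theorem~\ref{rigidity}) together with Macaulay's theorem (Theorem~\ref{Macaulay}). Since $\Delta$ is a $(d-1)$-dimensional $\field$-homology sphere with $d=4$ or $d=5$, we have $\lfloor d/2\rfloor=2$, so the $g$-vector is precisely $(1,g_1,g_2)=(1,\,h_1-h_0,\,h_2-h_1)$. For a sequence of length three, indexed with $d=2$ in Theorem~\ref{Macaulay}, the M-vector conditions reduce to the single inequality $g_2\le g_1^{<1>}$, together with nonnegativity of the entries (which will be transparent below). So it suffices to produce that one inequality.

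First I would fix a generic l.s.o.p. $\Theta=\{\theta_1,\dots,\theta_d\}$ and a generic one-form $\omega\in R_1$, and consider the graded ring $B:=\field(\Delta)/(\omega)=R/(I_\Delta+\Theta+(\omega))$. Being a homogeneous quotient of the polynomial ring $R$, Macaulay's theorem applies to $B$: its Hilbert function is an M-vector, so in particular $\dim_\field B_2\le(\dim_\field B_1)^{<1>}$. The work is then to identify $\dim_\field B_i$ for $i=0,1,2$. Because $\Delta$ is an $\field$-homology sphere, every reduced Betti number occurring in Schenzel's formula vanishes, so $\dim_\field\field(\Delta)_i=h_i$ for all $i$; hence $\dim_\field B_0=h_0=1$. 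Since a homology sphere is a normal pseudomanifold we have $f_0\ge d+1$, so $h_1=f_0-d\ge 1$, and for generic $\omega$ the image $\bar\omega\in\field(\Delta)_1$ is nonzero, making multiplication $\cdot\omega:\field(\Delta)_0\to\field(\Delta)_1$ injective; therefore $\dim_\field B_1=h_1-h_0=g_1$. Finally, $d\ge 4$ and an $\field$-homology sphere is a normal pseudomanifold, so the last part of Theorem~\ref{rigidity} gives that $\cdot\omega:\field(\Delta)_1\to\field(\Delta)_2$ is injective; thus $\dim_\field B_2=h_2-h_1=g_2$, and in particular $g_2\ge 0$. Feeding these into the Macaulay inequality for $B$ yields $g_2\le g_1^{<1>}$, and since $h_0=1$ and $g_1,g_2\ge 0$, the sequence $(1,g_1,g_2)$ is an M-vector.

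I do not expect a genuine obstacle here: all the substance is packaged inside Theorem~\ref{rigidity}, whose proof is the hard input, and the hypothesis $d\le 5$ is used only to guarantee that the $g$-vector terminates at $g_2$, so that nothing about the behaviour of $\field(\Delta)$ in degrees greater than $2$ is needed. The one point deserving care is the injectivity of $\cdot\omega$ in degree~$1$ supplied by the rigidity inequality — this is precisely the ingredient that is not known in general for higher-dimensional spheres and is what forces the $d\le 5$ restriction.
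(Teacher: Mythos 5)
Your argument is correct and rests on the same essential input as the paper: item (4) of Theorem \ref{rigidity}, combined with $\dim_\field\field(\Delta)_i=h_i$ for homology spheres, is exactly what yields $g_2\ge 0$, and the dimension restriction enters only because the $g$-vector then terminates at $g_2$. The only cosmetic difference is that you obtain $g_2\le g_1^{<1>}$ by applying Macaulay's theorem to the quotient $\field(\Delta)/(\omega)$, whereas the paper simply observes that this inequality is equivalent to the trivial edge bound $f_1\le\binom{f_0}{2}$; both routes are fine.
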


The content of the corollary is that $g_2 \ge 0.$  The inequality $g_2 \le g^{<1>}_1$ is just a different way of saying that $f_0 \le {f_1 \choose 2}.$ Another application of the rigidity inequality is the following upper bound.

\begin{corollary} \label{upper bound}
Let $\Delta$ be a normal pseudomanifold with $d \ge 4.$  Then, $g_3 \le g^{<2>}_2.$  In particular, if $g_3 \ge 0,$ then $(1, g_1, g_2, g_3)$ is an M-vector.  Furthermore, if $g_3=g^{<2>}_2,$ then for generic $\omega$ multiplication $\cdot \omega: \field(\Delta)_2 \to \field(\Delta)_3$ is injective.  
\end{corollary}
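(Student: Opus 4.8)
The plan is to leverage the Macaulay-type combinatorics of Hilbert functions together with the rigidity inequality of Theorem~\ref{rigidity}. Consider the ring $\field(\Delta)$ for a generic l.s.o.p.~$\Theta$, and let $A = \field(\Delta)/(\omega)$ for a generic one-form $\omega \in R_1$. The point is that $A$ is a homogeneous quotient of a polynomial ring, so its Hilbert function $(a_0, a_1, a_2, a_3, \dots)$ is automatically an M-vector by Macaulay's theorem (Theorem~\ref{Macaulay}); in particular $a_3 \le a_2^{<2>}$. Since $d \ge 4$, part (4) of Theorem~\ref{rigidity} says $\cdot\omega:\field(\Delta)_1 \to \field(\Delta)_2$ is injective, so $a_1 = h_1 - h_0 = g_1$ and $a_2 = h_2 - h_1 = g_2$ (using parts (1) and (2) of the rigidity inequality to identify $\dim_\field\field(\Delta)_i$ with $h_i$ for $i\le 2$). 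For degree three we only have the inequality $\dim_\field\field(\Delta)_3 \ge h_3$ from part (3), which gives $a_3 = \dim_\field\field(\Delta)_3 - \dim_\field(\omega\cdot\field(\Delta)_2) \ge h_3 - h_2 = g_3$ provided one checks that the image $\omega\cdot\field(\Delta)_2$ has dimension at most $h_2$, which is immediate since its dimension is at most $\dim_\field\field(\Delta)_2 = h_2$. Combining, $g_3 \le a_3 \le a_2^{<2>} = g_2^{<2>}$, which is the desired inequality.

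Next I would observe that if additionally $g_3 \ge 0$, then $(1, g_1, g_2, g_3)$ is an M-vector: we already have $g_2 \le g_1^{<1>}$ (this is equivalent to $f_0 \le \binom{f_1}{2}$, as noted after the previous corollary, and holds for any complex with at least $d+1$ vertices — in fact it follows from $a_1=g_1$, $a_2=g_2$ and Macaulay applied to $A$), we have just shown $g_3 \le g_2^{<2>}$, and $g_1 \ge 0$ trivially, so the three Macaulay inequalities defining an M-vector of length four are all satisfied.

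For the equality case, suppose $g_3 = g_2^{<2>}$. Then from $g_3 \le a_3 \le a_2^{<2>} = g_2^{<2>}$ we get $a_3 = a_2^{<2>}$, i.e.~the Hilbert function of $A$ grows maximally from degree $2$ to degree $3$. By the equality case of Macaulay's theorem (Gotzmann-type persistence / the standard analysis of when $a_3 = a_2^{<2>}$), this forces the degree-two component of $A$ to generate "as much as possible"; more concretely, it forces $a_3 = \dim_\field\field(\Delta)_3 - \dim_\field(\omega\cdot\field(\Delta)_2)$ to be as large as it can be, and chasing the two inequalities $g_3 \le \dim_\field\field(\Delta)_3 - h_2 = a_3$ (using the trivial bound $\dim_\field(\omega\cdot\field(\Delta)_2)\le h_2$) to equality yields $\dim_\field(\omega\cdot\field(\Delta)_2) = h_2 = \dim_\field\field(\Delta)_2$, so that $\cdot\omega:\field(\Delta)_2 \to \field(\Delta)_3$ is injective. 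One must be a little careful that the bound $a_3 \ge g_3$ from part (3) of Theorem~\ref{rigidity} combined with $a_3 = a_2^{<2>} = g_2^{<2>} = g_3$ forces all intermediate inequalities to be equalities; in particular $\dim_\field\field(\Delta)_3 - \dim_\field(\omega\cdot\field(\Delta)_2) = \dim_\field\field(\Delta)_3 - h_2$, whence injectivity.

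The main obstacle I anticipate is the bookkeeping in the equality case: one needs the precise form of Macaulay's equality statement (that $a_3 = a_2^{<2>}$ characterizes when a graded algebra generated in degree one behaves like the lex-segment ideal in those degrees) and must be sure the chain of inequalities $g_3 \le \dim_\field\field(\Delta)_3 - \dim_\field(\omega\cdot\field(\Delta)_2) = a_3 \le a_2^{<2>} = g_2^{<2>}$ collapses correctly so that the rigidity inequality part (3) and the trivial dimension bound on the image of $\omega$ both become equalities simultaneously. Everything else is a routine combination of Macaulay's theorem applied to the quotient $A$ with the four statements of Theorem~\ref{rigidity}.
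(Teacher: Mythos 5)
Your proposal is correct and follows essentially the same route as the paper: pass to $\field(\Delta)/(\omega)$, use the rigidity inequality (injectivity of $\cdot\omega$ in degree one and the dimension statements for $\field(\Delta)_i$, $i\le 3$) to identify the Hilbert function with $(1,g_1,g_2,a_3)$ where $a_3 \ge g_3 + \dim\ker(\cdot\omega\colon\field(\Delta)_2\to\field(\Delta)_3)$, and then apply Macaulay's theorem, with the equality case obtained by collapsing this chain of inequalities. The appeal to the ``equality case of Macaulay's theorem / Gotzmann persistence'' is unnecessary --- your own inequality chase already forces $\dim_\field(\omega\cdot\field(\Delta)_2)=h_2$ and hence injectivity --- but this does not affect correctness.
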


\begin{proof}
Both conclusions follow by considering $\field(\Delta)/(\omega)$, where multiplication $\cdot \omega: \field(\Delta)_1 \to \field(\Delta)_2$ is injective, and then combining Theorem \ref{Macaulay} with the elementary observation that $\dim_\field (\field(\Delta)/(\omega))_3 = g_3 + \ker \cdot \omega.$   \end{proof}

Theorem \ref{rigidity} has been extended to doubly Cohen-Macaulay spaces.  See Theorem \ref{2-cm} below.

\section{Manifolds and pseudomanifolds} \label{manifold section}

In \cite{N98} Kalai conjectured a far reaching generalization of the $g$-conjecture to manifolds.  Here we explain this conjecture and its relationship to the $g$-conjectures for spheres. We also examine upper and lower bounds for $g$-vectors of manifolds and pseudomanifolds without boundary that are implied by $g$-conjectures.   There are several categories of triangulated manifolds one might consider.  Examples include PL, topological and homology manifolds.  We will state our results for homology manifolds as they are the most general, but essentially identical results hold for any category which is defined in terms of the links of the complex.

  Suppose $|\Delta|$ is a $(d-1)$-dimensional $\field$-homology manifold (without boundary).  In contrast to spheres, there is frequently no chance for $\cdot \omega:\field(\Delta)_i \to \field(\Delta)_{i+1}$ to be an injection.  The {\bf socle} of $\field(\Delta)$ is the ideal $\{s \in \field(\Delta): \omega \cdot s = 0 ~\forall~ \omega \in R_1\}.$  We denote the socle of $\field(\Delta)$ by $\soc(\field(\Delta)).$ The socle of $\field(\Delta)$ is graded and if $\soc(\field(\Delta))_i \neq 0,$ then there is obviously no hope of finding one-forms $\omega$ which give injections from degree $i$ to $i+1.$
  
  \begin{theorem}  \cite{NovSw} \label{socle dim}
  If $\Delta$ is an $\field$-homology manifold (with or without boundary), then $$\dim_\field \soc(\field(\Delta))_i \ge {d \choose i} \beta_{i-1}(\Delta).$$
  \end{theorem}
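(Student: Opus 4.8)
The plan is to reduce the inequality to a computation with the local cohomology modules $H^i_\m(\field[\Delta])$ and then to manufacture, essentially by hand, enough elements of the socle of $\field(\Delta)$. I would start from two standard facts about an $\field$-homology manifold $\Delta$. First, since the link of every nonempty face is an $\field$-homology sphere, hence Cohen--Macaulay, the ring $\field[\Delta]$ is Buchsbaum (Schenzel's criterion), so each $H^i_\m(\field[\Delta])$ has finite length. Second, and more precisely, by Hochster's formula in the version phrased through links, together with the concentration of each proper link's reduced homology in its top degree, $H^i_\m(\field[\Delta])$ is concentrated in internal degree $0$ for $0\le i\le d-1$, with
$$\dim_\field H^i_\m(\field[\Delta])_0=\dim_\field\tilde{H}_{i-1}(|\Delta|;\field)=\beta_{i-1}(\Delta);$$
in particular it is annihilated by $\m$. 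After these reductions it suffices to prove $\dim_\field\soc(\field(\Delta))_i\ge\binom{d}{i}\dim_\field H^i_\m(\field[\Delta])_0$ for every $i$.

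The top degree $i=d$ I would dispatch first and separately. Schenzel's formula together with the identity $h_d=(-1)^{d-1}\tilde{\chi}(|\Delta|)$ and a short alternating-sum manipulation gives $\dim_\field\field(\Delta)_d=\beta_{d-1}(\Delta)$; since $\field(\Delta)$ vanishes in degrees exceeding $d$, all of $\field(\Delta)_d$ lies in the socle, so the bound holds there with equality.

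For $1\le i\le d-1$ --- the core of the argument --- I would fix a l.s.o.p. $\Theta=(\theta_1,\dots,\theta_d)$ and work with the graded Koszul cochain complex $K^\bullet=K^\bullet(\Theta;\field[\Delta])$, arranged so that the differential is multiplication by the $\theta_k$: its cohomology computes $\field(\Delta)=\field[\Delta]/\Theta$ at the top ($j=d$) and, by Koszul self-duality, the Koszul homology $H_{d-j}(\Theta;\field[\Delta])$ for $j<d$. In internal degree $0$, a class of $H^i(K^\bullet)$ is a cocycle $(a_T)_{|T|=i}$ with every $a_T\in\field[\Delta]_i$, taken modulo Koszul coboundaries; since $\field[\Delta]$ is Buchsbaum this cohomology is annihilated by $\m$ for $i<d$, and for a homology manifold the comparison map identifies $H^i(K^\bullet)_0$ with $H^i_\m(\field[\Delta])_0\cong\field^{\beta_{i-1}}$. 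For each $i$-subset $T\subseteq\{1,\dots,d\}$, sending the class of $(a_{T'})_{T'}$ to $\bar a_T\in\field(\Delta)_i$ is well defined (coboundaries have $T$-component in $\Theta\field[\Delta]$), giving a linear map $\psi_T\colon H^i(K^\bullet)_0\to\field(\Delta)_i$; and if a class is killed by a linear form $x$ then $(xa_{T'})_{T'}$ is a coboundary, so $x\cdot\psi_T$ vanishes and $\psi_T$ actually lands in $\soc(\field(\Delta))_i$. Assembling the $\binom{d}{i}$ maps $\psi_T$ into a single map $\bigoplus_{|T|=i}H^i(K^\bullet)_0\to\soc(\field(\Delta))_i$ produces the desired estimate, provided that this combined map is injective.

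Producing these socle elements is the soft part; the crux --- and where I expect the genuine difficulty --- is the injectivity of the combined map, i.e.\ the linear independence in $\field(\Delta)_i$ of the $\binom{d}{i}$ families of socle classes indexed by the $i$-subsets $T$. This is exactly the point at which $\field$-homology manifolds behave better than general Buchsbaum complexes: it is the uniform concentration of the lower local cohomology in internal degree $0$ that keeps the different $T$-contributions apart, and I would try to establish it by passing to the associated graded module of $\field[\Delta]$ along the $\Theta$-adic filtration, where the Buchsbaum ``independence of the system of parameters'' makes the $\binom{d}{i}$-fold replication transparent. An alternative, and perhaps cleaner, bookkeeping is to Matlis-dualize throughout: the statement becomes a lower bound on the number of minimal generators of the canonical module $\omega_{\field(\Delta)}$ in each degree, which the Schenzel--St\"uckrad--Vogel structure theory of Buchsbaum modules evaluates in terms of $\bigoplus_i\binom{d}{i}$ suitably shifted copies of the $H^i_\m(\field[\Delta])$. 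Some genericity of $\Theta$ is convenient for this step, although the final inequality holds for every l.s.o.p.
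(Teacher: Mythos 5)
The soft half of your argument is fine: the reduction via Gr\"abe/Hochster to the statement that $H^i_\m(\field[\Delta])$ is concentrated in internal degree $0$ with dimension $\beta_{i-1}$ for $i<d$, the top-degree case $i=d$, and the construction of the maps $\psi_T$ (well-definedness because the $T$-component of a Koszul coboundary lies in $\Theta\field[\Delta]$, and the image lying in the socle because $\m$ kills the relevant Koszul classes). But the inequality you are asked to prove \emph{is} the multiplicity $\binom{d}{i}$, and that is exactly the step you leave open: the injectivity of $\bigoplus_{|T|=i}\psi_T$. As written this is a genuine gap, not a routine verification. Note that even a single $\psi_T$ need not be injective --- a nonzero class in $H^i(K^\bullet)_0$ can perfectly well have its $T$-component in $\Theta\field[\Delta]$ while other components survive --- so your construction by itself does not even yield $\dim_\field\soc(\field(\Delta))_i\ge\beta_{i-1}$, let alone the factor $\binom{d}{i}$. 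The two fixes you gesture at (the associated graded module along the $\Theta$-adic filtration; Matlis duality plus Buchsbaum structure theory) are not arguments yet: neither explains why the $\binom{d}{i}$ different $T$-families stay independent after reduction mod $\Theta$, and the identification $H^i(K^\bullet)_0\cong H^i_\m(\field[\Delta])_0$ you invoke is itself only a surjection statement in general Buchsbaum theory, so the bookkeeping would have to be redone even if independence were granted.

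For comparison, the survey gives no proof and simply cites Novik--Swartz \cite{NovSw}, where the binomial coefficient is produced quite differently: one proves a purely module-theoretic statement by induction on the length of the l.s.o.p. Since $\field[\Delta]$ is Buchsbaum, $\m H^j_\m(M)=0$ for $j<d$, so the long exact sequence in local cohomology attached to $0\to M(-1)\xrightarrow{\;\theta\;}M\to M/\theta M\to 0$ splits and gives $H^j_\m(M/\theta M)\cong H^j_\m(M)\oplus H^{j+1}_\m(M)(-1)$ in the relevant range; iterating over $\theta_1,\dots,\theta_d$ and applying Pascal's rule accumulates $\binom{d}{i}$ shifted copies of $H^i_\m(\field[\Delta])$ inside $H^0_\m$ of the successive quotients, and only at the very last step does one observe that these copies sit in the socle of $\field(\Delta)=M/\Theta M$; the independence you are missing is carried automatically by the direct-sum decompositions at each stage. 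If you want to salvage your Koszul-cochain construction, you would in effect have to reprove this splitting degree by degree, which is the same induction in different clothing; so either adopt the inductive argument outright or supply a genuine proof of the injectivity of $\bigoplus_T\psi_T$ --- at present the theorem's actual content is assumed rather than proved.
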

  
  To overcome this difficulty Kalai introduced $h''$ vectors which take into account the socle.  Let $h'_i(\Delta) =  h_i(\Delta) + \binom{d}{i} \sum^{i-2}_{j=0} (-1)^{j-i} \beta_j.$  This is the right-hand side of Schenzel's formula.  Now define $h''_i(\Delta) = h'_i(\Delta) - {d \choose i} \beta_{i-1}$ for $0 \le i<d$ and $h''_d=h'_d.$  
  
  \begin{theorem} \cite{N98}
  If $\Delta$ is an orientable $(d-1)$-dimensional $\field$-homology manifold (without boundary), then $h''_i = h''_{d-i}.$
  \end{theorem}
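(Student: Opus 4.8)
The plan is to prove $h''_i = h''_{d-i}$ by a direct computation combining three facts already on the table: Schenzel's formula, which identifies $\dim_\field \field(\Delta)_i$ with $h'_i$ and hence determines $h''_i$ in terms of $h_i$ and the reduced Betti numbers; Klee's equations (Theorem~\ref{Klee}), which apply because every $\field$-homology manifold is semi-Eulerian and which govern $h_i - h_{d-i}$; and Poincar\'e duality over $\field$. Since $\Delta$ is connected, $\beta_{-1} = \beta_0 = 0$, and since it is an orientable closed $(d-1)$-dimensional $\field$-homology manifold, $\beta_{d-1} = 1$ and $\beta_j = \beta_{d-1-j}$ for $1 \le j \le d-2$.

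First I would dispose of the end cases. We have $h''_0 = h'_0 - \binom{d}{0}\beta_{-1} = h_0 = 1$, and $h''_d = h'_d$; substituting $h_d = (-1)^{d-1}\tilde{\chi}(|\Delta|)$ into Schenzel's right-hand side and cancelling the overlapping terms of the alternating Betti sum collapses it to $h'_d = \beta_{d-1} = 1$, so $h''_0 = h''_d$. In the remaining range $1 \le i \le d-1$ (so that $d-i$ lies in the same range), folding the term $-\binom{d}{i}\beta_{i-1}$ into Schenzel's sum via $-1 = (-1)^{i-(i-1)}$ yields the closed form $h''_i = h_i + \binom{d}{i}\sum_{j=0}^{i-1}(-1)^{i-j}\beta_j$. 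Subtracting the corresponding formula for $h''_{d-i}$ and using $\binom{d}{i} = \binom{d}{d-i}$, the identity reduces to showing
$$ h_i - h_{d-i} + \binom{d}{i}\Bigl(\sum_{j=0}^{i-1}(-1)^{i-j}\beta_j - \sum_{j=0}^{d-i-1}(-1)^{d-i-j}\beta_j\Bigr) = 0. $$

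To handle the Betti-number bracket I would discard the $j=0$ terms (both zero), reindex the second sum by $k = d-1-j$ --- note that $1 \le j \le d-i-1$ keeps $j$ and $k$ inside $[1,d-2]$, where Poincar\'e duality gives $\beta_j = \beta_k$ --- and observe that the exponent $(-1)^{d-i-j}$ becomes $-(-1)^{i-k}$. The two sums then merge into $\sum_{j=1}^{d-2}(-1)^{i-j}\beta_j$, and completing this to the full alternating sum $\sum_{j=-1}^{d-1}(-1)^j\beta_j = \tilde{\chi}(|\Delta|)$ using $\beta_{-1} = \beta_0 = 0$ and $\beta_{d-1} = 1$ shows the bracket equals $(-1)^i\bigl(\tilde{\chi}(|\Delta|) - (-1)^{d-1}\bigr)$. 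On the other side, Klee's equation gives $h_{d-i} - h_i = (-1)^i\binom{d}{i}(\chi(\Delta) - \chi(S^{d-1})) = (-1)^i\binom{d}{i}\bigl(\tilde{\chi}(|\Delta|) - (-1)^{d-1}\bigr)$, since reduced and unreduced Euler characteristics differ by the same constant. Adding the two contributions gives an exact cancellation, proving the claim.

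The step needing the most care is the bookkeeping at the interface of reduced and unreduced homology: $\beta_j = \beta_{d-1-j}$ holds for the reduced Betti numbers only in the interior range $1 \le j \le d-2$, and at the two ends one must instead invoke connectivity ($\beta_0 = 0$) and orientability ($\beta_{d-1} = 1$). Getting the ranges right is precisely what produces the shift by $(-1)^{d-1}$ in the alternating Betti sum, which is exactly what is needed to match the $\chi(\Delta) - \chi(S^{d-1})$ term in Klee's equation; a sign slip here would make the two contributions fail to cancel. A more conceptual alternative, worth noting, is to show that the quotient of $\field(\Delta)$ by its socle in degrees below $d$ is a Poincar\'e duality algebra with Hilbert function $h''$ --- using Theorem~\ref{socle dim} together with Gorenstein duality for the canonical module of the Buchsbaum ring $\field[\Delta]$ --- from which the symmetry is immediate; but the computation above is self-contained given Klee's theorem and Schenzel's formula.
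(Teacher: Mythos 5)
Your proof is correct, and since the paper states this result only with a citation to \cite{N98} (no proof is given), your derivation --- rewriting $h''_i = h_i + \binom{d}{i}\sum_{j=0}^{i-1}(-1)^{i-j}\beta_j$ from Schenzel's formula, then cancelling against Klee's semi-Eulerian relations via Poincar\'e duality, with the correct bookkeeping of $\beta_0=0$, $\beta_{d-1}=1$ producing the $\chi(S^{d-1})$ shift --- is exactly the standard argument behind that citation; the sign and index checks all go through. The conceptual alternative you sketch at the end is essentially the later result of \cite{NovSw2} (that $\field(\Delta)$ modulo its socle below degree $d$ is Gorenstein with Hilbert function $h''$), which the paper invokes in the discussion after Theorem \ref{Kalai_conj}, so both routes are consistent with the surrounding text.
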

  
  \begin{conjecture} (Kalai's manifold $g$-conjecture) \cite{N98}
  If $\Delta$ is an orientable $(d-1)$-dimensional $\field$-homology manifold (without boundary), then $(h''_0, h''_1 - h''_0, \dots, h''_{\lfloor d/2 \rfloor} - h''_{\lfloor d/2 \rfloor-1})$ is an M-vector.
  \end{conjecture}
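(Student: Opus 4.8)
The plan is to imitate Stanley's proof of the $g$-theorem \cite{St80}, with the face ring of a simplicial polytope boundary replaced by a suitable Artinian Gorenstein quotient of $\field[\Delta]$ whose Hilbert function is the $h''$-vector. The first step is to exhibit that quotient. Fix a generic l.s.o.p.\ $\Theta$. Theorem~\ref{socle dim} gives $\dim_\field\soc(\field(\Delta))_i\ge\binom{d}{i}\beta_{i-1}$, and one uses orientability, together with the refinements of Novik and Swartz \cite{NovSw}, to upgrade this to an equality in every degree $i<d$ for generic $\Theta$; moreover $\field(\Delta)_d$ is one-dimensional by Schenzel's formula \cite{Sch} and orientability. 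Hence, quotienting $\field(\Delta)$ by the part of its socle in degrees $<d$ produces a standard graded Artinian algebra $\bar A$ with $\dim_\field\bar A_i=h''_i$ for all $i$, and the work of Novik and Swartz shows $\bar A$ is in fact a Poincar\'e duality algebra with socle in degree $d$ (using the fundamental class of the orientable manifold $|\Delta|$), consistent with the symmetry $h''_i=h''_{d-i}$. The point to keep in mind is that $\bar A$ is a homogeneous quotient of the polynomial ring $R$.

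Granting for the moment that $\bar A$ has a \emph{weak Lefschetz element}, the theorem follows. As with $\field$-homology spheres, since $\bar A$ is a standard graded Artinian Gorenstein algebra it suffices to produce a single l.s.o.p.\ and a single $\omega\in R_1$ for which $\cdot\omega:\bar A_{\lfloor d/2\rfloor}\to\bar A_{\lfloor d/2\rfloor+1}$ is surjective: surjectivity of a multiplication map propagates to all higher degrees, and Poincar\'e duality then turns it into injectivity of $\cdot\omega:\bar A_i\to\bar A_{i+1}$ for every $i<d/2$, for generic $\omega$. With such an $\omega$ in hand, for $0\le j\le\lfloor d/2\rfloor$ one has $\dim_\field(\bar A/(\omega))_j=\dim_\field\bar A_j-\dim_\field(\omega\bar A)_j=h''_j-h''_{j-1}$ (with $h''_{-1}=0$); since $\bar A/(\omega)$ is itself a homogeneous quotient of a polynomial ring, Macaulay's theorem (Theorem~\ref{Macaulay}) forces its Hilbert function --- and therefore the initial segment $(h''_0,\,h''_1-h''_0,\,\dots,\,h''_{\lfloor d/2\rfloor}-h''_{\lfloor d/2\rfloor-1})$ --- to be an M-vector, which is precisely the assertion.

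The hard part, unsurprisingly, is the one already open for spheres: producing the weak Lefschetz element for $\bar A$. Here $\Delta$ is not the boundary of a polytope, so neither Stanley's toric variety nor McMullen's convexity is available, and since $\Delta$ is not built from lower-dimensional spheres in any transparent algebraic way one cannot simply invoke the sphere case either. I would attack this on two fronts. First, one can try to reduce to the $g$-conjecture for $\field$-homology spheres: for a vertex $v$ the link $\lk v$ is an $\field$-homology $(d-2)$-sphere and the antistar $\Delta\setminus v$ is an $\field$-homology manifold with boundary, and the Mayer--Vietoris-type splitting $\Delta=(\Delta\setminus v)\cup_{\lk v}(v*\lk v)$ ought to relate a Lefschetz property of $\bar A$ to Lefschetz properties of $\field[\lk v]$ and of the boundary sphere, so that the (still open) sphere case would yield the manifold case. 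Second, for PL-manifolds one can try an induction via Pachner's theorem: starting from a convenient triangulation --- say a stacked manifold in the sense of Bagchi--Datta \cite{BD2} and Murai--Nevo \cite{MN}, whose associated $\bar A$ is built from boundaries of simplices much as for strongly edge decomposable spheres \cite{BN}, \cite{Mur} --- one would control the effect of a single bistellar move on $\soc(\field(\Delta))$ and on the middle multiplication map. The obstruction is genuine: as the discussion of bistellar moves in Section~\ref{PL-spheres} makes clear, it is not even known that bistellar moves preserve the relevant Lefschetz properties for spheres.

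Finally, for the weaker statement that the conjectured $g$-vector is merely nonnegative, one should also pursue the route, indicated after Corollary~\ref{beta1}, that bypasses the face ring: together with the rigidity inequality (Theorem~\ref{rigidity}) it already settles the low-degree entries, and the hope is to push such methods to give $h''_i-h''_{i-1}\ge0$ for all $i\le\lfloor d/2\rfloor$, independently of whether one succeeds in establishing the weak Lefschetz property for $\bar A$.
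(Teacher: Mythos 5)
The statement you are asked about is a \emph{conjecture}: the paper does not prove it, and it remains open. Your write-up does not prove it either --- and, to your credit, you say so. What you actually establish is a conditional statement: \emph{if} the Artinian Gorenstein quotient $\bar A=\field(\Delta)/J$ (where $J$ is the socle in degrees $<d$) admits a weak Lefschetz element, then the $h''$-differences form an M-vector. That reduction is sound and is essentially the known machinery: the facts that $\dim_\field(\field(\Delta)/J)_i=h''_i$ and that $\field(\Delta)/J$ is Gorenstein for orientable $\field$-homology manifolds are exactly the Novik--Swartz results \cite{NovSw2} quoted after Theorem~\ref{Kalai_conj}, and your passage from a single middle-degree surjection to injectivity below the middle and then to Macaulay's theorem is the standard Gorenstein argument. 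But the step you defer --- producing the weak Lefschetz element --- is not a technical lemma you can postpone; it is the entire open content of the conjecture, so the proposal is a program rather than a proof, and neither of your two ``fronts'' (a Mayer--Vietoris-type reduction to links, or induction over bistellar moves from stacked manifolds) is carried out; indeed Sections~\ref{PL-spheres} and \ref{balls} of the paper explain why both are currently blocked even for spheres.

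For comparison, the strongest thing the paper offers in this direction is Theorem~\ref{Kalai_conj} (from \cite{NovSw2}): Kalai's conjecture holds for $\Delta$ provided all but at most $d+1$ of the vertex links have weak Lefschetz elements. That reduction is sharper than the one you set up, because it asks for the (already conjectural) sphere property only of the links and transfers it to $\field(\Delta)$ via the cokernel lemma (Lemma~\ref{cokernel}) together with Gorenstein duality of $\field(\Delta)/J$, rather than postulating a Lefschetz element for $\bar A$ itself. Your first ``front'' gestures at this link-based reduction but replaces the cokernel-lemma mechanism with an unexecuted Mayer--Vietoris heuristic, so even the conditional statement you would want (``sphere $g$-conjecture implies manifold $g$-conjecture'') is not actually derived in your text; it is, however, exactly what Theorem~\ref{Kalai_conj} provides. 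In short: your conditional skeleton matches the known approach, but the statement itself remains unproven, and the missing ingredient is precisely the open weak Lefschetz problem.
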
 
  
  For homology spheres this is just the $g$-conjecture.  Amazingly, this conjecture is no stronger than the conjectured existence of weak Lefschetz elements for spheres.  
  
 \begin{theorem} \label{Kalai_conj} \cite{NovSw2}
   Let $\Delta$ be a $(d-1)$-dimensional $\field$-homology manifold.  Suppose that for all but possibly $d+1$ vertices  $v,~\field[\lk v ]$ has  weak Lefschetz elements.  Then Kalai's manifold conjecture holds for $\Delta.$
 \end{theorem}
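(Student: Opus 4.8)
\smallskip\noindent\emph{Proof proposal.} We may assume $\Delta$ is $\field$-orientable and has no boundary (otherwise Kalai's conjecture asserts nothing about $\Delta$), and that $d\ge 4$, the cases $d\le 3$ being contained in the rigidity inequality (Theorem~\ref{rigidity}). Fix a generic l.s.o.p.\ $\Theta$ and a generic one-form $\omega$ for $\field[\Delta]$, set $k=\lceil d/2\rceil+1$, and let $W$ be the set --- of size at most $d+1$ --- of vertices $v$ for which $\field[\lk v]$ lacks weak Lefschetz elements. The plan proceeds in three stages.

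\emph{Stage 1: reduction to a single surjectivity.} I would first invoke the socle analysis surrounding Theorem~\ref{socle dim}: for a connected $\field$-orientable homology manifold, Novik and Swartz produce a canonical \emph{Gorenstein} Artinian quotient $\widehat A=\field(\Delta)/N$ of $\field(\Delta)$, where $N\subseteq\soc\field(\Delta)$ satisfies $\dim_\field N_i=\binom di\beta_{i-1}$ for $i<d$, the socle degree of $\widehat A$ is $d$, and, by Schenzel's formula, its Hilbert function is exactly $h''(\Delta)$ (see \cite{NovSw}). Kalai's conjecture for $\Delta$ then says precisely that the $g$-vector of $\widehat A$ is an M-vector, and by Macaulay's Theorem~\ref{Macaulay} it is enough to show $\cdot\omega\colon\widehat A_{i-1}\to\widehat A_i$ is injective for all $i\le\lfloor d/2\rfloor$. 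Exploiting the Poincar\'e duality pairing on $\widehat A$ (under which $\cdot\omega\colon\widehat A_{s}\to\widehat A_{s+1}$ is adjoint to $\cdot\omega\colon\widehat A_{d-s-1}\to\widehat A_{d-s}$), the upward propagation of surjectivity, and the identity $d-k=\lfloor d/2\rfloor-1$, this whole family of injectivities follows from the single assertion that $\cdot\omega\colon\widehat A_{k-1}\to\widehat A_k$ is surjective. Since $\widehat A_k$ is a quotient of $\field(\Delta)_k$, I reduce further to showing $\bar m\in\omega\,\field(\Delta)_{k-1}$ for every degree-$k$ monomial $m$ whose support is a face of $\Delta$.

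\emph{Stage 2: the good vertices, via a local-to-global surjection.} For each vertex $w$, multiplication by $x_w$ identifies the ideal $x_w\field[\Delta]$ with $\field[w*\lk w](-1)=\field[\lk w][x_w](-1)$ as $R$-modules; reducing mod $\Theta$ (generically an l.s.o.p.\ for every $\field[w*\lk w]$) yields $\field[w*\lk w]/\Theta\cong\field(\lk w)$, on which $\omega$ acts as a generic one-form $\bar\omega_w$. As $\field[\Delta]$ is standard graded, $\field(\Delta)_k=\sum_w\overline{x_w\field[\Delta]_{k-1}}$, and the $w$-th summand is an $\omega$-equivariant quotient of $\field(\lk w)_{k-1}$. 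When $w\notin W$, weak Lefschetz for the $(d-2)$-sphere $\lk w$ forces $\field(\lk w)/(\bar\omega_w)$ to have Hilbert function the $g$-vector of $\lk w$ and hence to vanish in degrees above $\lfloor(d-1)/2\rfloor$; since $k-1=\lceil d/2\rceil>\lfloor(d-1)/2\rfloor$ we get $\field(\lk w)_{k-1}=\bar\omega_w\,\field(\lk w)_{k-2}$, whence $\overline{x_w\field[\Delta]_{k-1}}\subseteq\omega\,\field(\Delta)_{k-1}$. Thus $\bar m\in\omega\,\field(\Delta)_{k-1}$ for every degree-$k$ monomial $m$ with a vertex of $\supp m$ outside $W$; only monomials with $\supp m\subseteq W$ remain.

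\emph{Stage 3: absorbing the exceptional vertices --- the crux.} For $m$ with $\supp m\subseteq W$, I would work in the subalgebra $\field[\Delta|_W]\hookrightarrow\field[\Delta]$ on the $|W|\le d+1$ variables of $W$. Restricting $\Theta$ to these variables and writing each $\theta_i|_W$ as $\theta_i$ minus its complement (which is supported outside $W$, hence harmless by Stage 2) shows $\bar m\equiv\overline{\tilde r}\pmod{\omega\,\field(\Delta)_{k-1}}$, where $\tilde r$ represents the class of $m$ in $\field[\Delta|_W]/(\Theta|_W)$. If $|W|\le d$ this class vanishes in positive degree and we are done. If $|W|=d+1$, then $\Delta|_W\ne\partial\sigma^d$: otherwise $\partial\sigma^d\subseteq\Delta$, forcing $\Delta=\partial\sigma^d$ by strong connectivity of the normal pseudomanifold $\Delta$, so that every vertex link equals $\partial\sigma^{d-1}$ --- which has weak Lefschetz elements --- contradicting $W\ne\emptyset$. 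Hence some $d$-subset of $W$ is a nonface, $\field[\Delta|_W]/(\Theta|_W)\cong\field[t]/(t^e)$ for some $e\le d$ with $t=\sum_{v\in W}\lambda_v x_v$ spanning the one ``missing'' direction, and $\tilde r$ is a scalar times $t^k$. Finally, expressing the $W$-part of $\omega$ through $\Theta|_W$ and $t$ gives $\bar t=\mu^{-1}(\bar\omega+\bar L)$ in $\field(\Delta)$ with $\mu\ne0$ by genericity and $L$ a one-form supported outside $W$; expanding $\bar t^k=\mu^{-k}(\bar\omega+\bar L)^k$ and applying Stage 2 to the single purely-$L$ term shows $\bar t^k\in\omega\,\field(\Delta)_{k-1}$, hence $\bar m\in\omega\,\field(\Delta)_{k-1}$. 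This establishes surjectivity of $\cdot\omega\colon\field(\Delta)_{k-1}\to\field(\Delta)_k$, and Stage 1 concludes the argument. I expect Stage 3 --- the bookkeeping with the exceptional stars, and in particular the point that ``$d+1$'' is exactly the budget compatible with $\Delta\ne\partial\sigma^d$ --- to be the only part needing genuine care; Stages 1 and 2 are formal once the Gorenstein quotient of \cite{NovSw} and the closed-star module identification are at hand.
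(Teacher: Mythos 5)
Your proposal is correct and follows essentially the same route as the paper: Stage 1 is exactly the Novik--Swartz facts that the quotient of $\field(\Delta)$ by its socle in degrees below $d$ is Gorenstein with Hilbert function $h''$, so everything reduces by duality to one surjectivity above the middle, while Stages 2--3 are a hand-unpacked, monomial-level version of the cokernel lemma (Lemma \ref{cokernel}), whose bound vanishes precisely because $|W|\le d+1$ so that $\Theta|_W$ together with $\omega$ generically spans all linear forms in the $W$-variables modulo terms supported on good vertices. The only (harmless) redundancy is the $\Delta|_W\neq\partial\sigma^d$ and $\field[t]/(t^e)$ discussion in Stage 3, which is not actually needed once you have $\bar t=\mu^{-1}(\bar\omega+\bar L)$ and Stage 2.
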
 
  
  The proof of this theorem is based on the following facts.  One, if $J=\oplus^{d-1}_{i=0} \soc(\field(\Delta))_i$ is the socle of $\field(\Delta)$ except for degree $d,$ then $\dim_\field (\field(\Delta)/J)_i= h''_i.$  Two, $\field(\Delta)/J$  is a Gorenstein ring \cite{NovSw2}.  This means that finding $\omega$ such that $\cdot \omega: \field(\Delta)_i \to \field(\Delta)_{i+1}$ is injective is the same as finding $\omega$ such that multiplication by $\omega$ from $\field(\Delta)_{d-i-1} \to \field(\Delta)_{d-i}$ is surjective.  Three, the following very general  lemma which bounds the size of the cokernel of multiplication by generic one-forms. 
  
  \begin{lemma} (cokernel lemma) \label{cokernel}
  Let $\Delta$ be a pure $(d-1)$-dimensional complex.  Let $V$ be the subset of vertices $v$ such that there do not  exist generic one-forms $\omega_v$ and l.s.o.p $\Theta_v$ such that multiplication $\cdot \omega_v: \field(\lk v)_{i-1} \to \field(\lk v )_i$ is surjective.  Then for generic $\omega$ the dimension of the cokernel of  multiplication $\cdot \omega: \field(\Delta)_i \to \field(\Delta)_{i+1} $ is at most the number of degree $i$ monomials in a polynomial ring with $|V|-d-1$ variables. 
   \end{lemma}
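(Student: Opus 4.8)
The plan is to identify the cokernel with a single graded component of an explicit Artinian ring, and then to show that the variables attached to vertices outside $V$ are forced to die in that degree, so that only the $V$-variables -- modulo $d+1$ generic linear forms -- can survive. First, with $\Theta=\{\theta_1,\dots,\theta_d\}$ a generic l.s.o.p.\ and $\omega$ a generic one-form, the cokernel of $\cdot\omega\colon\field(\Delta)_i\to\field(\Delta)_{i+1}$ equals $\big(\field(\Delta)/\omega\field(\Delta)\big)_{i+1}=\big(\field[\Delta]/\Xi\big)_{i+1}$, where $\Xi$ is the ideal generated by the $d+1$ generic linear forms $\theta_1,\dots,\theta_d,\omega$. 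So the task is to bound $\dim_\field(\field[\Delta]/\Xi)_{i+1}$.

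For a vertex $v$, multiplication by $x_v$ factors through the star: one has $x_v\field[\Delta]\cong\field[\Star v](-1)$ as graded $\field[\Delta]$-modules, and $\field[\Star v]\cong\field[\Delta]/\mathrm{ann}_{\field[\Delta]}(x_v)$. Passing to $\field[\Delta]/\Xi$ yields a surjection $(\field[\Star v]/\Xi)(-1)\twoheadrightarrow x_v(\field[\Delta]/\Xi)$, hence $\dim_\field\big(x_v(\field[\Delta]/\Xi)\big)_{j+1}\le\dim_\field(\field[\Star v]/\Xi)_{j}$. Because $\Star v=v\ast\lk v$ is a cone, eliminating $x_v$ identifies a generic Artinian reduction of $\field[\Star v]$ with $\field(\lk v)$ modulo one extra generic one-form, which plays the role of $\omega_v$; so $(\field[\Star v]/\Xi)_{j}\cong\big(\field(\lk v)/\omega_v\field(\lk v)\big)_{j}=\mathrm{coker}\big(\cdot\omega_v\colon\field(\lk v)_{j-1}\to\field(\lk v)_{j}\big)$. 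The genericity bookkeeping here is the step I expect to be the main obstacle: one must check that a generic l.s.o.p.\ for $\field[\Delta]$ together with a generic $\omega$, after restriction to $\field[\Star v]$ and elimination of $x_v$, is still a l.s.o.p.-plus-generic-form on $\field[\lk v]$. This holds because ``being an l.s.o.p.'' and ``$\cdot\omega_v$ surjective'' are each nonempty Zariski-open conditions, and a linear surjection carries generic tuples of forms to generic tuples; the degenerate cases (cone points $v$, where $\cost v$ or $\Star v$ drop dimension) need a separate check.

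Next, if $\cdot\omega_v\colon\field(\lk v)_{i-1}\to\field(\lk v)_{i}$ is surjective for generic data, then, since $\field(\lk v)$ is standard graded and commutative, $\field(\lk v)_{k+1}=\field(\lk v)_1\cdot\field(\lk v)_{k}=\omega_v\field(\lk v)_{k}$ for all $k\ge i$; that is, $\cdot\omega_v$ is surjective in every degree $\ge i$. Hence for $v\notin V$ the ring $\field(\lk v)/\omega_v\field(\lk v)$ vanishes in all degrees $\ge i$, and combining with the previous paragraph gives $x_v\cdot(\field[\Delta]/\Xi)_{j}=0$ for every $j\ge i$ and every $v\notin V$.

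Finally, let $\Delta[V]$ be the full subcomplex of $\Delta$ on $V$, so that $\field[\Delta[V]]$ is the subring of $\field[\Delta]$ generated by $\{x_v\}_{v\in V}$. By the vanishing just proved, any degree-$(i+1)$ monomial using a variable $x_w$ with $w\notin V$ is zero in $\field[\Delta]/\Xi$, so the natural map $\field[\Delta[V]]_{i+1}\to(\field[\Delta]/\Xi)_{i+1}$ is onto. Writing $\theta_k=\ell_k+r_k$ ($1\le k\le d$, and $\omega=\ell_{d+1}+r_{d+1}$) with $\ell_k$ supported on $V$ and $r_k$ supported on the complement, the $r_k$ are combinations of variables attached to vertices not in $V$, hence annihilate $(\field[\Delta]/\Xi)_{\ge i}$; since $\theta_k$ and $\omega$ annihilate everything, so do the $V$-parts $\ell_1,\dots,\ell_{d+1}$ on $(\field[\Delta]/\Xi)_{\ge i}$. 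Therefore the surjection above factors through $\big(\field[\Delta[V]]/(\ell_1,\dots,\ell_{d+1})\big)_{i+1}$. As $\field[\Delta[V]]$ is a quotient of the polynomial ring on $\{x_v\}_{v\in V}$ and the $\ell_k$ are generic linear forms there (genericity passing from the $\theta_k,\omega$ to their $V$-parts under projection), $\field[\Delta[V]]/(\ell_1,\dots,\ell_{d+1})$ is a quotient of a polynomial ring in $\max(0,|V|-d-1)$ variables; reading off its dimension in the relevant degree yields the claimed bound. In particular, when $|V|\le d+1$ the bound is $0$ in positive degree, forcing surjectivity there.
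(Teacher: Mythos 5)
Your proof is correct and follows essentially the same route as the paper: your identification $x_v\field[\Delta]\cong\field[\Star v](-1)$ together with the cone reduction of $\field[\Star v]$ to $\field(\lk v)$ modulo one extra form is exactly the surjection $\field(\lk v)\to (x_v)\subseteq\field(\Delta)$ that the paper uses, and your direct annihilation argument is just an unpacking of the paper's short exact sequence $0\to I_V\to\field(\Delta)\to\field(\Delta_V)\to 0$ plus the snake lemma, both proofs finishing by counting monomials in $\field[\Delta_V]$ modulo $d+1$ generic linear forms. (Your bookkeeping yields the count of degree-$(i+1)$ monomials in $|V|-d-1$ variables, which is in fact the bound the paper's applications use, e.g.\ $\binom{N-3}{3}$ in the normal three-dimensional pseudomanifold theorem, so the ``degree $i$'' in the statement should be read accordingly.)
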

  
\begin{proof}
Let $V$ be the subset of vertices $v$ for which there do not exist generic one-forms $\omega$ such that multiplication $\cdot \omega: \field(\lk v)_{i-1} \to \field(\lk v)_i$ is surjective and let $\Delta_V$ be the induced subcomplex on this set of vertices.  For generic $\omega$ consider the commutative diagram
\begin{equation} \label{surjective commutative diagram}
\begin{array}{ccccccccc}
0&\to&I_{V_i}&\to&\field(\Delta)_i&\to&\field(\Delta_V)_i&\to&0\\
 & &\cdot \omega \uparrow& &\cdot \omega \uparrow& &\cdot \omega \uparrow& & \\
 0&\to&I_{V_{i-1}}&\to&\field(\Delta)_{i-1}&\to&\field(\Delta_V)_{i-1}&\to&0.
\end{array}
\end{equation}
The ideal $I_V$ is generated by the monomials $x_j,$ where there do exist generic one-forms whose multiplication induces surjectivity from degree $i-1$ to degree $i.$  Using the same reasoning as in the proof of \cite[Theorem 4.26]{Sw} there are natural surjections from $\field(\lk {v_j})$ to the ideal $(x_j) \subseteq \field(\Delta).$  Hence the l.h.s. arrow is a surjection and the snake lemma finishes the proof. 
\end{proof}

Kalai's manifold $g$-conjecture is a remarkable generalization of the $g$-conjecture.  However, in practice, sharp upper and lower bounds have been obtained by using the cokernel lemma to show  $\cdot \omega: \field(\Delta)_{d-i} \to \field(\Delta)_{d-i+1}$ is surjective when $i \ge d/2.$  This was the idea behind the following.

\begin{theorem} \cite{Sw}, \cite{NovSw}  
Let $\Delta$ be an  $\field$-homology manifold (with or without boundary) and $d \ge 4.$ Then for generic $\Theta$ and $\omega$ 
\begin{equation} \label{manifold rigidity}
\cdot \omega: \field(\Delta)_{d-2} \to \field(\Delta)_{d-1}
\end{equation}
is a surjection.  In particular, $h''_{d-2} \ge h'_{d-1}.$  Furthermore, if $\partial \Delta = \emptyset,\ h''_{d-2} = h'_{d-1}$ and $d \ge 5$,  then all the links of $\Delta$ are stacked spheres and $|\Delta|$ is either $S^{d-1}$ or a connected sum of $S^{d-2}$-bundles over $S^1.$  
\end{theorem}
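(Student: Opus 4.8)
The plan is to obtain the surjectivity from the cokernel lemma (Lemma~\ref{cokernel}), read the inequality $h''_{d-2}\ge h'_{d-1}$ off of that surjectivity together with the socle bound of Theorem~\ref{socle dim}, and then push the equality case down to the vertex links of $\Delta$, where the lower bound theorem and the structure theory of stacked manifolds finish the job. Throughout, $\Theta$ and $\omega$ are generic, and by Schenzel's formula $\dim_\field\field(\Delta)_i=h'_i$.

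For the surjectivity, apply Lemma~\ref{cokernel} with $i=d-2$: the cokernel of $\cdot\omega:\field(\Delta)_{d-2}\to\field(\Delta)_{d-1}$ is bounded by the number of degree $d-2$ monomials in a polynomial ring with $|V|-d-1$ variables, where $V$ is the set of vertices $v$ admitting no generic $\omega_v$ with $\cdot\omega_v:\field(\lk v)_{d-3}\to\field(\lk v)_{d-2}$ surjective; since $d-2\ge 2$ it is enough that $|V|\le d+1$, and I claim in fact $V=\emptyset$. Fix a vertex $v$. Because $|\Delta|$ is an $\field$-homology manifold, $\lk v$ is a $(d-2)$-dimensional $\field$-homology sphere or ball. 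In the sphere case $\field[\lk v]$ is Gorenstein of Krull dimension $d-1$, so $\field(\lk v)$ is Artinian Gorenstein with one-dimensional socle in degree $d-1$; by Gorenstein duality, surjectivity of $\cdot\omega_v$ in degrees $d-3\to d-2$ is equivalent to injectivity of $\cdot\omega_v$ in degrees $1\to 2$. For $d\ge 5$ the link is a normal pseudomanifold of dimension $d-2\ge 3$, so this is exactly part~(4) of the rigidity inequality (Theorem~\ref{rigidity}); for $d=4$ the link is a $2$-sphere, hence combinatorially the boundary of a simplicial $3$-polytope, whose face ring carries a Lefschetz element, and that in particular gives the needed injectivity. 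If $\lk v$ is a ball, $\field[\lk v]$ is only Cohen--Macaulay and the Gorenstein shortcut is unavailable; here one argues separately, either by coning off the $(d-3)$-homology sphere $\partial\lk v$ to produce a $(d-2)$-sphere and transferring the surjectivity back, or---more robustly---by working with the canonical module of $\field[\lk v]$, i.e.\ the relative face module of the pair $(\lk v,\partial\lk v)$. Either way $V=\emptyset$, so (\ref{manifold rigidity}) is a surjection.

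For the inequality, observe that $\cdot\omega$ annihilates $\soc(\field(\Delta))_{d-2}$, so (\ref{manifold rigidity}) factors through $\field(\Delta)_{d-2}/\soc(\field(\Delta))_{d-2}$. By Theorem~\ref{socle dim}, $\dim_\field\soc(\field(\Delta))_{d-2}\ge\binom{d}{d-2}\beta_{d-3}$, so this quotient has dimension at most $h'_{d-2}-\binom{d}{d-2}\beta_{d-3}=h''_{d-2}$; since it surjects onto $\field(\Delta)_{d-1}$, of dimension $h'_{d-1}$, we get $h''_{d-2}\ge h'_{d-1}$. Now assume $\partial\Delta=\emptyset$, $d\ge 5$, and $h''_{d-2}=h'_{d-1}$. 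Then both inequalities just used are equalities: the bound of Theorem~\ref{socle dim} is sharp in degree $d-2$, and $\cdot\omega:\field(\Delta)_{d-2}/\soc(\field(\Delta))_{d-2}\to\field(\Delta)_{d-1}$ is an isomorphism. Tracing this tightness back through the surjections $\field(\lk v)\to(x_v)\subseteq\field(\Delta)$ that underlie Lemma~\ref{cokernel} forces, for every vertex $v$, that $\cdot\omega_v:\field(\lk v)_{d-3}\to\field(\lk v)_{d-2}$ is itself an isomorphism; dualizing as above, $\cdot\omega_v:\field(\lk v)_1\to\field(\lk v)_2$ is an isomorphism, i.e.\ $g_2(\lk v)=0$. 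Since $\lk v$ is a $(d-2)$-dimensional $\field$-homology sphere with $d-2\ge 3$, the equality case of the lower bound theorem (Kalai's rigidity argument, building on Barnette and Walkup) forces $\lk v$ to be a stacked sphere. Finally, a closed $(d-1)$-dimensional $\field$-homology manifold all of whose vertex links are stacked spheres is precisely a stacked manifold in the sense of Bagchi--Datta and Murai--Nevo, and the classification of these identifies $|\Delta|$ with $S^{d-1}$ or a connected sum of $S^{d-2}$-bundles over $S^1$.

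The step I expect to be the main obstacle is the equality-case propagation to the links: establishing that $h''_{d-2}=h'_{d-1}$ forces $\cdot\omega_v$ to be an isomorphism $\field(\lk v)_{d-3}\to\field(\lk v)_{d-2}$ for \emph{every} $v$ needs a careful degree-by-degree analysis of the short exact sequences in Lemma~\ref{cokernel} and of the kernels of the maps $\field(\lk v)\to(x_v)$, rather than the one-line bound that suffices elsewhere. A close second is the Cohen--Macaulay (boundary) case of the surjectivity, where Gorenstein duality is unavailable and one must set up the relative canonical-module machinery for $(\lk v,\partial\lk v)$; and although the closing classification of stacked manifolds is available, it rests on combinatorial topology that is not visible to the face ring.
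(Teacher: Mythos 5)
Your treatment of the surjectivity and of the inequality is essentially sound, and in fact amounts to reconstructing the results the paper simply cites: applying Lemma~\ref{cokernel} with $i=d-2$, reducing the sphere-link case by Gorenstein duality to injectivity in degrees $1\to 2$ (Theorem~\ref{rigidity}, with the $2$-sphere/polytope argument when $d=4$), and handling ball links by coning off $\partial(\lk v)$ and transferring surjectivity down (Stanley's monotonicity trick) recovers \cite[Corollary 4.29]{Sw}; the socle bound of Theorem~\ref{socle dim} then gives $h''_{d-2}\ge h'_{d-1}$ exactly as the paper does. Note that the paper's own proof is almost entirely a citation: surjectivity is \cite[Corollary 4.29]{Sw}, the inequality is immediate from Theorem~\ref{socle dim}, and the equality case is \cite[Theorem 5.2]{NovSw} together with an argument extending that theorem to nonorientable $\Delta$.

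The genuine gap is the equality case, precisely the step you flag as the main obstacle. The claim that $h''_{d-2}=h'_{d-1}$ forces $\cdot\omega_v:\field(\lk v)_{d-3}\to\field(\lk v)_{d-2}$ to be an isomorphism for \emph{every} vertex cannot be obtained by ``tracing tightness back'' through Lemma~\ref{cokernel}: in your application $V=\emptyset$, so the lemma only asserts that the global map is surjective; it bounds a cokernel that is zero whether or not equality holds, and it carries no per-vertex information about kernels or about which links account for the size of $\soc(\field(\Delta))_{d-2}$. The real content here is \cite[Theorem 5.2]{NovSw}, whose proof relies on the finer fact (\cite[Proposition 4.24]{Sw}) that for orientable $\Delta$ there are isomorphisms, degree by degree, from $\field(\lk j)$ onto the ideals $(x_j)\subseteq\field(\Delta)$; it is this localization at each vertex that yields $g_2(\lk v)=0$, and it is also exactly where orientability enters --- the paper's proof of the present theorem consists largely of explaining why that step still works in the relevant degrees when $\Delta$ is nonorientable, a point your outline does not address at all. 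Once every link is known to be stacked, invoking the Kalai--Walkup classification to identify $|\Delta|$ with $S^{d-1}$ or a connected sum of $S^{d-2}$-bundles over $S^1$ is fine as a citation. So: surjectivity and the inequality are correct in your proposal (by a route equivalent to the cited one), but the equality statement --- the only part with substantial new content --- is asserted rather than proved, and the mechanism you propose for it does not suffice.
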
 

\noindent For the definition of stacked sphere see Section \ref{small g2}.

\begin{proof}
The surjectivity in (\ref{manifold rigidity}) is \cite[Corollary 4.29]{Sw}.  The inequality follows immediately from Theorem \ref{socle dim}.  The last statement  when $|\Delta|$ is orientable is \cite[Theorem 5.2]{NovSw}.  The only place orientability is used  in that proof is an appeal to \cite[Proposition 4.24]{Sw} to show that there are degree one isomorphisms from $\field(\lk j)$ to  the ideal generated by  $x_j$ in $\field(\Delta).$  While this is no longer true when $|\Delta|$ is not orientable since $\field(\lk j)_{d-1} \simeq \field$ and $(x_j)_d = 0,$ the argument in the proof of \cite[Proposition 4.24]{Sw} is still valid for lower degrees.  So the proof of \cite[Theorem 5,2]{NovSw}  also works for nonorientable $\Delta$.  
\end{proof}

\begin{corollary}  \label{beta1}
Suppose $\Delta$ is an $\field$-homology manifold  and $d \ge 4.$  
\begin{enumerate}
  \item \cite{NovSw} \label{orientable beta1}
  If $|\Delta|$ is orientable, then $g_2 \ge {d+1 \choose 2} \beta_1.$ If $d \ge 5$ and $g_2 = {d+1 \choose 2} \beta_1,$ then every vertex link is a stacked sphere and $|\Delta|$ is either $S^{d-1}$ or a  connected sum of copies of the orientable $S^{d-2}$-bundle over $S^1.$  
  \item If $|\Delta|$ is not orientable, then $g_2 \ge {d+1 \choose 2} (\beta_{d-2} + 1).$  If $d \ge 5$ and $g_2 = {d+1 \choose 2} (\beta_{d-2}+1),$ then every vertex link is a stacked sphere and $\Delta$ is a connected sum of copies of the nonorientable $S^{d-2}$-bundle over $S^1.$  
\end{enumerate}
\end{corollary}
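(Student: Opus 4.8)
The plan is to deduce both inequalities directly from the preceding theorem, which gives $h''_{d-2}\ge h'_{d-1}$ for every $\field$-homology manifold with $d\ge4$, and then to feed the equality case $h''_{d-2}=h'_{d-1}$ back into that theorem's structural conclusion; here (with $\partial\Delta=\emptyset$) $|\Delta|$ is taken closed, as in the rest of Section~\ref{manifold section}.

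For the orientable case I would argue by a short $h''$-bookkeeping. As $\Delta$ is connected, $\beta_0=0$, so Schenzel's formula together with the definition of $h''$ gives $h''_1=h_1$ and $h''_2=h_2-\binom d2\beta_1$, hence
$$g_2=h_2-h_1=(h''_2-h''_1)+\binom d2\beta_1 .$$
By Kalai's symmetry $h''_i=h''_{d-i}$ for closed orientable $\field$-homology manifolds, $h''_2-h''_1=h''_{d-2}-h''_{d-1}$, and since $h'_{d-1}=h''_{d-1}+\binom d{d-1}\beta_{d-2}$ the preceding theorem yields $h''_{d-2}-h''_{d-1}\ge h'_{d-1}-h''_{d-1}=d\,\beta_{d-2}$. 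Poincar\'e duality ($\beta_1=\beta_{d-2}$) then gives $g_2\ge d\beta_1+\binom d2\beta_1=\binom{d+1}2\beta_1$, with equality forcing $h''_{d-2}=h'_{d-1}$.

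In the nonorientable case Kalai's symmetry is unavailable, so I would expand $h''_{d-2}-h'_{d-1}$ directly. Combining Schenzel's formula for $h'_{d-2}$ and $h'_{d-1}$, the definition of $h''$, and Klee's relations (Theorem~\ref{Klee}) $h_{d-1}-h_1=-d\bigl(\chi(\Delta)-\chi(S^{d-1})\bigr)$ and $h_{d-2}-h_2=\binom d2\bigl(\chi(\Delta)-\chi(S^{d-1})\bigr)$, a routine computation yields, in all cases,
$$h''_{d-2}-h'_{d-1}=g_2+\binom{d+1}2\Bigl[(1+(-1)^d)\!\!\sum_{1\le j\le d-3}\!\!(-1)^j\beta_j+(-1)^d(\beta_{d-2}-\beta_{d-1}+1)\Bigr].$$
When $|\Delta|$ is nonorientable, $\beta_{d-1}=\dim_\field\field(\Delta)_d=0$; for $d$ odd the $\sum$-term is killed by its coefficient, and for $d$ even it is pinned down by $\chi(\Delta)=0$ (which, for $d$ even, is itself a case of Theorem~\ref{Klee}). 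In either parity the bracket collapses to $-(\beta_{d-2}+1)$, so $h''_{d-2}-h'_{d-1}=g_2-\binom{d+1}2(\beta_{d-2}+1)$, and the preceding theorem gives the claimed inequality, again with equality precisely when $h''_{d-2}=h'_{d-1}$. (Putting $\beta_{d-1}=1$ in the same identity reproves the orientable bound.)

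For the equality assertions ($d\ge5$), equality forces $h''_{d-2}=h'_{d-1}$, so the preceding theorem applies: every vertex link of $\Delta$ is a stacked sphere and $|\Delta|$ is $S^{d-1}$ or a connected sum of $S^{d-2}$-bundles over $S^1$. The orientability hypothesis rules out $S^{d-1}$ (respectively, every nonorientable summand) in the nonorientable (respectively, orientable) case, and matching $\beta_{d-2}$ against the relevant quantity---using additivity of $g_2$ under connected sum and the fact that a stacked triangulation of each summand meets its own lower bound---identifies the number and type of summands, via the structure theory of stacked manifolds of Bagchi and Datta \cite{BD2} and Murai and Nevo \cite{MN}. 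I expect this last step, isolating the summands in the nonorientable equality case, to be the genuinely delicate point; the inequalities themselves are bookkeeping layered on the preceding theorem.
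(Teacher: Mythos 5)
Your proposal is correct and follows essentially the same route as the paper: both inequalities are obtained as bookkeeping consequences (Schenzel's formula, Klee's equations, the Euler characteristic as the alternating sum of Betti numbers, Poincar\'e duality/Kalai's symmetry in the orientable case, and $\beta_{d-1}=0$ in the nonorientable case) of the preceding theorem $h''_{d-2}\ge h'_{d-1}$, whose equality case then supplies the stacked links and the connected-sum conclusion --- exactly the argument sketched in the paper's remark, where the orientable inequality is simply cited from \cite{NovSw}. The only point worth noting is that the final step you flag as ``genuinely delicate'' is in fact immediate: once the preceding theorem gives that $|\Delta|$ is $S^{d-1}$ or a connected sum of $S^{d-2}$-bundles over $S^1$, (non)orientability alone determines the summand type (a nonorientable such connected sum is homeomorphic to a connected sum of copies of the nonorientable bundle), so no additional stacked-manifold structure theory or $\beta_{d-2}$-matching is needed.
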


\begin{remark}
The first item  is \cite[Theorem 5.2]{NovSw}.  The nonorientable inequality was known to the authors of \cite{NovSw} who eventually published a more general form for pseudomanifolds \cite[Theorem 4.9]{NovSw4}.  In this setting it follows easily from the previous theorem, the definitions of $h'$ and $h''$, Klee's equations, the reduced Euler characteristic as the alternating sum of the reduced Betti nmbers and the fact that $\beta_{d-1}=0$ when $\Delta$ is not orientable.   When $d=4$ the two statements are the same since $\beta_1 = \beta_2 +1$ for nonorientable homology three-manifolds. 
\end{remark}

The inequality in Corollary \ref{beta1}, (\ref{orientable beta1}.)  was originally conjectured by Kalai \cite[Conjecture 14.1]{Ka87}. The three-dimensional case was recently proven by Bagchi \cite{Bag} using methods that are rooted in PL-Morse theory.  While there is no apparent route to the upper bounds implicit in the $g$-conjecture, there is also no apparent reason to believe that this alternative approach could not produce a proof of $g_i \ge 0$ for $\field$-homology spheres.  Of course, by Corollary \ref{upper bound}, the upper bound on $g_3$ comes for free.   As evidence of the potential of the Morse-theoretic technique we point out that Bagchi also extended the equality case of Corollary \ref{beta1} to $d=4$ \cite{Bag}.  

What other upper and lower bounds are implied when the links of the vertices of an $\field$-homology manifold satisfy the $g$-conjecture?  To answer this Murai and Nevo introduced a variant of the $g$-vector.       For $0 \le i \le \lfloor d/2 \rfloor$ define $\overline{g}_i$ to be $h''_{d-i} - h'_{d-i+1}$, and $\hat{g}_i =   g_i +(-1)^{i+1} {d+1 \choose i} \sum^i_{j=1} (-1)^j \beta_{j-1}.$

   \begin{proposition}  \label{gtilde formula} Let $\Delta$ be an $\field$-homology manifold and $0 \le i \le d/2.$  
     \begin{equation} \label{tilde any} \overline{g}_i =  g_i + (-1)^{i+1} {d+1 \choose i}\left[ 1+ \sum^i_{j=1} (-1)^j \beta_{d-j} \right].
     \end{equation}
     \item  \cite{MN}
   If $|\Delta|$ is  $\field$-orientable, then 
   \begin{equation}\label{gtilde orientable}
   \hat{g}_i = \overline{g}_i.
   \end{equation}
   \end{proposition}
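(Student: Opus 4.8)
The plan is to prove (\ref{tilde any}) by directly unwinding the definitions, with Klee's equation doing the main work, and then to deduce (\ref{gtilde orientable}) from it via Poincar\'e duality. We may assume $1 \le i \le d/2$. Since then $d/2 \le d-i \le d-1 < d$, the defining relation for $h''$ gives $h''_{d-i} = h'_{d-i} - \binom{d}{i}\beta_{d-i-1}$, so $\overline{g}_i = \big(h'_{d-i} - h'_{d-i+1}\big) - \binom{d}{i}\beta_{d-i-1}$. I would next expand each $h'_k$ by the right-hand side of Schenzel's formula, collecting the reduced Betti contributions into partial Euler sums $P_m := \sum_{j=0}^{m}(-1)^j\beta_j$ and combining the two binomial coefficients that arise with Pascal's identity $\binom{d}{k} + \binom{d}{k+1} = \binom{d+1}{k+1}$. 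This produces $\overline{g}_i = (h_{d-i} - h_{d-i+1}) + (-1)^{d-i}\binom{d+1}{i}P_{d-i-2} - \binom{d+1}{i}\beta_{d-i-1}$.

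Now I would invoke Theorem \ref{Klee} in the form $h_{d-k} - h_k = (-1)^k\binom{d}{k}\big(\tilde{\chi}(\Delta) - (-1)^{d-1}\big)$ (using $\chi(\Delta) - \chi(S^{d-1}) = \tilde{\chi}(\Delta) - (-1)^{d-1}$), applied at $k = i$ and $k = i-1$, and subtract to get $h_{d-i} - h_{d-i+1} = g_i + (-1)^i\binom{d+1}{i}\big(\tilde{\chi}(\Delta) - (-1)^{d-1}\big)$. Substituting, $\overline{g}_i - g_i$ becomes $\binom{d+1}{i}$ times a quantity built only from $\tilde{\chi}(\Delta)$ and the reduced Betti numbers. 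The last step is a parity-sensitive but routine manipulation. When $d$ is odd the full-Euler-characteristic terms combine with $P_{d-i-2}$ to leave only the top Betti numbers, because $\tilde{\chi}(\Delta) - P_{d-i-2}$ is exactly the tail $\sum_{j=d-i-1}^{d-1}(-1)^j\beta_j$; when $d$ is even one first notes that Theorem \ref{Klee} at $k=0$ together with $h_d = (-1)^{d-1}\tilde{\chi}(\Delta)$ forces $\tilde{\chi}(\Delta) = -1$, which makes the Klee correction vanish outright. In either case, re-indexing the surviving sum by $j \mapsto d-j$ and cancelling the lone $\beta_{d-i-1}$ term against the endpoint of that sum yields precisely $(-1)^{i+1}\binom{d+1}{i}\big[1 + \sum_{j=1}^{i}(-1)^j\beta_{d-j}\big]$, which is (\ref{tilde any}).

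For (\ref{gtilde orientable}) it then suffices, comparing (\ref{tilde any}) with the definition of $\hat{g}_i$, to verify that $\sum_{j=1}^{i}(-1)^j\beta_{j-1} = 1 + \sum_{j=1}^{i}(-1)^j\beta_{d-j}$ when $|\Delta|$ is $\field$-orientable. Here I would use Poincar\'e duality over $\field$ in its reduced form: $\beta_k = \beta_{d-1-k}$ for $1 \le k \le d-2$, together with $\beta_0 = 0$ (connectedness) and $\beta_{d-1} = 1$ (the fundamental class). On the right the $j=1$ summand is $-\beta_{d-1} = -1$, which cancels the leading $+1$; on the left the $j=1$ summand is $-\beta_0 = 0$; and for $2 \le j \le i \le d/2$ one has $\beta_{d-j} = \beta_{j-1}$ by duality, so the two truncated sums agree term by term.

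I expect the only genuine subtlety to be the careful use of \emph{reduced} homology throughout. It is exactly the asymmetry $\beta_{d-1} = 1 \ne 0 = \beta_0$ for orientable $\field$-homology manifolds that accounts for the stray ``$+1$'' in (\ref{tilde any}) and for the way the orientability hypothesis enters (\ref{gtilde orientable}); keeping the signs straight through the re-indexing $j \mapsto d-j$, and not forgetting the forced value $\tilde{\chi}(\Delta) = -1$ in even dimensions, is where the bookkeeping has to be done with care. Everything else is routine binomial manipulation.
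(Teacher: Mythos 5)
Your computation is correct and follows exactly the route the paper indicates (and leaves as a sketch): expand $h''_{d-i}$ and $h'_{d-i\pm0,+1}$ via the definitions and Schenzel's right-hand side, apply Klee's equations, use that the reduced Euler characteristic is the alternating sum of the reduced Betti numbers (with $\tilde{\chi}=-1$ forced when $d$ is even), and then deduce (\ref{gtilde orientable}) from (\ref{tilde any}) by Poincar\'e duality, which is precisely the equivalence the paper attributes to \cite{MN}. Your restriction to $1\le i\le d/2$ is harmless, since the $i=0$ case is a degenerate boundary convention not addressed by the paper's argument either.
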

   
   \begin{proof}
   The first formula follows from a straightforward  computation using the definition of $h'', h'$, Klee's equations, and the fact that the reduced Euler characteristic is the alternating sum of the reduced Betti numbers.  The second formula  is \cite[Corollary 5.6]{MN} and is equivalent to the first via Poincar\'e duality  
   \end{proof}

\begin{theorem} \label{MN gtilde} \cite[Theorem 5.4]{MN}
If $\Delta$ is an $\field$-homology manifold and the links of the vertices of $\Delta$ have weak Lefschetz elements, then $\overline{g}_i \ge 0.$   Furthermore, if $|\Delta|$ is $\field$-orientable, then $(\hat{g}_0, \dots, \hat{g}_{\lfloor d/2 \rfloor})$ is an M-vector.
\end{theorem}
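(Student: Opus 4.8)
The plan is to run the cokernel lemma (Lemma~\ref{cokernel}) as the engine: it turns weak Lefschetz for the vertex links into surjectivity of multiplication by a generic linear form in the upper half of $\field(\Delta)$, and from there the inequality is immediate by dimension counting, while the M-vector statement comes from Matlis duality in a Gorenstein quotient. First I would feed the hypothesis into the lemma. Every vertex link $\lk v$ is a $(d-2)$-dimensional $\field$-homology sphere, so $\field[\lk v]$ is Gorenstein with socle in degree $d-1$; if it has weak Lefschetz elements then for generic l.s.o.p.\ and $\omega_v$ the map $\cdot\,\omega_v\colon\field(\lk v)_{k-1}\to\field(\lk v)_k$ is surjective whenever $k\ge\lceil d/2\rceil$. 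By Lemma~\ref{cokernel}, for every such $k$ the vertex set $V$ is empty and the cokernel of $\cdot\,\omega\colon\field(\Delta)_k\to\field(\Delta)_{k+1}$ vanishes; equivalently, for generic $\Theta$ and $\omega$,
$$\cdot\,\omega\colon\field(\Delta)_{d-i}\longrightarrow\field(\Delta)_{d-i+1}\ \text{ is surjective for all }0\le i\le\lfloor d/2\rfloor .$$
For $i=2$ this is the surjectivity~(\ref{manifold rigidity}) already in hand; the cases $i<2$ are the new input.

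The inequality $\overline g_i\ge 0$ follows at once. For $i=0$ it just says $h'_d=\dim_\field\field(\Delta)_d\ge 0$. For $1\le i\le\lfloor d/2\rfloor$, $\omega$ annihilates the socle and, for an $\field$-homology manifold, $\soc(\field(\Delta))_{d-i}=J_{d-i}$ has dimension exactly $\binom{d}{d-i}\beta_{d-i-1}$ — the lower bound is Theorem~\ref{socle dim}, and the matching upper bound is encoded in $\dim_\field(\field(\Delta)/J)_k=h''_k$. Hence the surjection above factors through $(\field(\Delta)/J)_{d-i}$, giving $(\field(\Delta)/J)_{d-i}\twoheadrightarrow\field(\Delta)_{d-i+1}$, so, using $\dim_\field\field(\Delta)_{d-i+1}=h'_{d-i+1}$ (Schenzel) and $\dim_\field(\field(\Delta)/J)_{d-i}=h''_{d-i}$, we get $h''_{d-i}\ge h'_{d-i+1}$, i.e.\ $\overline g_i\ge 0$.

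For the M-vector statement, assume $|\Delta|$ is $\field$-orientable, so $B:=\field(\Delta)/J$ is Gorenstein of socle degree $d$ with $\dim_\field B_k=h''_k$. Multiplication by $\omega$ descends to a surjection of graded $B$-modules $B\xrightarrow{\cdot\,\omega}(\omega)[1]$ onto the shifted principal ideal $(\omega)\subseteq\field(\Delta)$; let $K\subseteq B$ be its kernel, an ideal of $B$. By the first step this surjection hits all of $\field(\Delta)_{k+1}$ for $k\ge\lceil d/2\rceil$, so $\dim_\field K_k=h''_k-h'_{k+1}$ there, which is exactly $\dim_\field K_{d-i}=\overline g_i$ for $0\le i\le\lfloor d/2\rfloor$. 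Now invoke Matlis duality inside the Gorenstein ring $B$: the dual $K^{\vee}$ of the ideal $K$ is the quotient ring $B/\mathrm{ann}_B(K)$, hence a homogeneous quotient of a polynomial ring, and (up to the standard reflection) $\dim_\field(K^{\vee})_l=\dim_\field K_{d-l}$, which equals $\overline g_l=\hat g_l$ for $0\le l\le\lfloor d/2\rfloor$ by Proposition~\ref{gtilde formula}. Macaulay's theorem (Theorem~\ref{Macaulay}) applied to $K^{\vee}$, together with the fact that a truncation of an M-vector is an M-vector, then shows $(\hat g_0,\dots,\hat g_{\lfloor d/2\rfloor})$ is an M-vector.

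I expect the real obstacle to be the Gorenstein property of $\field(\Delta)/J$ in the orientable case: it is the structural fact that simultaneously pins down the socle dimensions and makes Matlis duality on $K$ available, and it lies well beyond the rigidity-type inputs used in the first two steps. A secondary point needing care is the bookkeeping identifying $\dim_\field K_{d-i}$ with $\hat g_i$ via Proposition~\ref{gtilde formula}, which rests on Poincar\'e duality among the Betti numbers and hence on orientability — this is precisely why the unconditional inequality $\overline g_i\ge 0$ does not by itself yield the M-vector conclusion.
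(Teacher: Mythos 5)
Your proposal is correct and follows essentially the intended route for this result (which the paper itself does not reprove but cites from \cite{MN}, and whose ingredients it assembles around Theorem~\ref{Kalai_conj}): weak Lefschetz elements for the links feed the cokernel lemma to give surjectivity of $\cdot\,\omega\colon\field(\Delta)_{d-i}\to\field(\Delta)_{d-i+1}$ for $i\le\lfloor d/2\rfloor$, Theorem~\ref{socle dim} then gives $\overline{g}_i\ge 0$, and in the orientable case the Novik--Swartz Gorenstein quotient $\field(\Delta)/J$ with Hilbert function $h''$ plus Matlis duality (turning the kernel ideal $K$ into the cyclic module $B/\mathrm{ann}_B(K)$ with reflected Hilbert function) yields the M-vector statement, exactly matching the ring with Hilbert function $(1,\hat{g}_1,\hat{g}_2,\dots)$ alluded to after Theorem~\ref{MN gtilde}. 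One minor remark: in the first part, where orientability is not assumed, you only need the lower bound on the socle dimension from Theorem~\ref{socle dim} (the surjection factors through the quotient by the full socle, whose dimension is at most $h''_{d-i}$), so the exact socle dimension you assert there is an unneeded overstatement and your argument goes through with the weaker input.
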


\begin{corollary}
If $|\Delta|$ is an orientable $\field$-homology  manifold and $d \ge 4,$ then $\hat{g}_3 \le \hat{g}^{<2>}_2.$
\end{corollary}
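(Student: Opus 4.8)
The plan is to imitate the proof of Corollary \ref{upper bound}, but inside a Gorenstein Artinian ring attached to $\Delta$. Note first that, since $\Delta$ is $\field$-orientable and $d\ge4$, the surjectivity in (\ref{manifold rigidity}) already yields $h''_{d-2}\ge h'_{d-1}$, i.e.\ $\overline g_2\ge0$, and $\hat g_2=\overline g_2$ by Proposition \ref{gtilde formula}; so $\hat g_2\ge0$ and $\hat g_2^{<2>}\ge0$ in every case. When $d=4$ or $d=5$ one checks directly, using Klee's equations and Poincar\'e duality, that $\hat g_3\le0$, so the inequality is trivial there. Hence assume $d\ge6$, and work inside $A:=\field[\Delta]/J$, where $J=\bigoplus_{i<d}\soc(\field(\Delta))_i$; recall from the facts listed after Theorem \ref{Kalai_conj} that $\dim_\field A_i=h''_i$ and, since $\Delta$ is orientable, that $A$ is Gorenstein of socle degree $d$.

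First I would push the surjectivity in (\ref{manifold rigidity}) through the quotient map $\field(\Delta)\to A$: for generic $\omega$, $\cdot\bar\omega\colon A_{d-2}\to A_{d-1}$ is onto, so by Gorenstein duality in $A$ the map $\cdot\bar\omega\colon A_1\to A_2$ is injective, whence $\dim(A/(\bar\omega))_2=h''_2-h''_1$. This overshoots $\hat g_2$: a computation with the definitions of $h',h''$, Klee's equations, Schenzel's formula and Poincar\'e duality gives $h''_2-h''_1=\hat g_2+d\beta_1$, while the equality version of Theorem \ref{socle dim} for homology manifolds, together with Poincar\'e duality, gives $\dim\soc(\field(\Delta))_{d-1}=d\beta_1$. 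So the next step is to exhibit a $d\beta_1$-dimensional subspace of $\soc(A/(\bar\omega))_2$, pass to the quotient $B$ of $A/(\bar\omega)$ by its socle in all degrees below the top, and verify that $\dim B_i=\hat g_i$ for $i\le\lfloor d/2\rfloor$. This identification is the engine behind Theorem \ref{MN gtilde}; the point for us is that through degree three it requires only rigidity-level control of the vertex links — each $\lk v$ is a homology $(d-2)$-sphere, so Theorem \ref{rigidity} and Corollary \ref{upper bound} apply to it — rather than the full weak Lefschetz property assumed in Theorem \ref{MN gtilde}. Concretely, Corollary \ref{upper bound} for the links, fed through the cokernel lemma (Lemma \ref{cokernel}) in degree three, should bound $\soc(A/(\bar\omega))_3$ and thus keep $\dim B_3\ge\hat g_3$.

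Once $\dim B_i=\hat g_i$ for $i\le3$ is in hand, the conclusion is immediate: $B$ is a homogeneous quotient of a polynomial ring, so its Hilbert function is an M-vector by Theorem \ref{Macaulay}, and therefore $\hat g_3=\dim B_3\le(\dim B_2)^{<2>}=\hat g_2^{<2>}$.

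The main obstacle is the middle step — reconciling the Hilbert function of $A/(\bar\omega)$ with the numbers $\hat g_i$. One must (a) locate the extra $d\beta_1$-dimensional degree-two socle of $A/(\bar\omega)$, whose dimension should match $\dim\soc(\field(\Delta))_{d-1}$ via Theorem \ref{socle dim} and Poincar\'e duality, and (b) check that deleting the low-degree socle of $A/(\bar\omega)$ does not push the degree-three dimension below $\hat g_3$, i.e.\ that the degree-three socle of $A/(\bar\omega)$ is not too large. Orientability is essential throughout — in the Gorenstein property of $A$ and in every appeal to Poincar\'e duality — and the bookkeeping linking socle dimensions to the $\hat g_i$ through Klee's equations and Schenzel's formula is the delicate part.
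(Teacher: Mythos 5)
Your overall skeleton --- build a homogeneous quotient of a polynomial ring whose Hilbert function is at most $\hat{g}_2$ in degree two and at least $\hat{g}_3$ in degree three, then quote Theorem \ref{Macaulay} --- is exactly the paper's strategy, and your bookkeeping is correct ($h''_2-h''_1=\hat{g}_2+d\beta_1$, and Gorenstein duality in $A=\field(\Delta)/J$ does turn the surjectivity of (\ref{manifold rigidity}) into injectivity of $\cdot\bar{\omega}\colon A_1\to A_2$). The problem is that the decisive step is not proved: you say one ``should'' exhibit a $d\beta_1$-dimensional subspace of $\soc(A/(\bar{\omega}))_2$ and ``should'' bound $\soc(A/(\bar{\omega}))_3$, and then list exactly these as the main obstacles. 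That is the content of the corollary, not a proof of it. The paper closes this gap by rerunning Murai and Nevo's proof of Theorem \ref{MN gtilde}: their construction, fed only the rigidity injectivity $\cdot\omega\colon\field(\Delta)_1\to\field(\Delta)_2$, already produces a ring with Hilbert function $(1,\hat{g}_1,\hat{g}_2,\hat{g}_3+a,\dots)$ with $a\ge0$; the extra $d\beta_1$ of degree-two socle (your obstacle (a)) comes from the Novik--Swartz/Murai--Nevo socle machinery, not from anything assembled from the results quoted in this paper. (Also, the equality version of Theorem \ref{socle dim} you invoke is not among the stated results, and your ``trivial'' cases $d=4,5$ need the rigidity inequality $g_2\ge0$, not only Klee's equations and Poincar\'e duality; the uniform argument makes that case split unnecessary anyway.)

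Moreover, the specific tools you propose for degree three would not do the job and are not needed. Quotienting $A/(\bar{\omega})$ by its entire socle below the top degree and verifying $\dim B_i=\hat{g}_i$ for all $i\le\lfloor d/2\rfloor$ is precisely Theorem \ref{MN gtilde}, which genuinely uses the weak Lefschetz property of the links; rigidity gives no injectivity $A_{i-1}\to A_i$ for $i\ge3$, so this cannot be checked here. Corollary \ref{upper bound} yields injectivity of $\cdot\omega\colon\field(\lk v)_2\to\field(\lk v)_3$ only in the extremal case $g_3(\lk v)=g_2(\lk v)^{<2>}$, and Lemma \ref{cokernel} bounds cokernels of multiplication on $\field(\Delta)$, not socles of $A/(\bar{\omega})$, so neither controls $\soc(A/(\bar{\omega}))_3$. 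Fortunately degree three need not be touched at all: socle elements of degree two generate an ideal concentrated in degree two, so killing only those leaves $\dim B_3=\dim(A/(\bar{\omega}))_3\ge h''_3-h''_2=\hat{g}_3+\binom{d}{2}\beta_2\ge\hat{g}_3$ automatically, while $\dim B_2\le\hat{g}_2$ would follow once the $d\beta_1$ independent degree-two socle elements are produced. That single ingredient is the genuine gap --- and it is exactly the piece the paper imports from Murai and Nevo's argument.
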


\begin{proof}
Murai and Nevo's proof of Theorem \ref{MN gtilde} without the assumption of weak Lefschetz elements, but with the knowledge that $\cdot \omega: \field(\Delta)_1 \to \field(\Delta)_2$ is injective, produces a ring whose Hilbert function is $(1, \hat{g}_1, \hat{g}_2, \hat{g}_3 + a, \dots)$ where $a \ge 0.$  
\end{proof}

The upper bounds for $\hat{g}_i,~i \ge3$ in the last Theorem and Corollary are sharp for arbitrary values of $\beta_1.$     Specifically, given an M-vector 
$(1, g_1, g_2, \dots, g_{\lfloor d/2 \rfloor})$ and $\beta \ge 0$, then there is a
manifold $\Delta$ so that $\beta_1(\Delta)=\beta$ and $\hat{g}_i (\Delta) = g_i$ for $i \ge 2.$  Start with a Billera-Lee polytope boundary whose $g$-vector is $(1, g_1, g_2, \dots, g_{\lfloor d/2 \rfloor}).$ Now subdivide facets enough times so that it is possible to identify $\beta$ pairs of facets in an orientable fashion and still have a simplicial complex.  After removing the interiors of the identified facets you will be left with a manifold  with the required properties. 

Whether or not nonorientable $\Delta$ whose links have weak Lefschetz elements satisfy $\hat{g}_i \ge 0$ is not known yet.   For evidence in favor, see the $i$-stacked manifolds in Section \ref{balls}.
 
Under certain circumstances this circle of ideas can be applied to pseudomanifolds.   In a triangulation of a normal three-dimensional pseudomanifold the link of a vertex is a connected compact surface without boundary.    The link types which are not spheres are called singular  and via excision are easily seen to be topological invariants of the pseudomanifold.   The following bound on $g_3$ for normal three-dimensional pseudomanifolds allowed Akhmejanov \cite{Akh} and Novik-Swartz \cite{NovSw4} to determine the complete set of possible $f$-vectors for a number of examples. 

\begin{theorem} 
Suppose $\Delta$ is a normal three-dimensional pseudomanifold with $N$ singular vertices.  Then $g_3 \le {N-3 \choose 3}.$
\end{theorem}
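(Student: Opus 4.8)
The plan is to trap $g_3$ between two facts about a generic Artinian reduction $\field(\Delta)$ of the face ring: a lower bound for the cokernel of a single generic multiplication map coming from the rigidity inequality, and an upper bound for that same cokernel coming from the cokernel lemma, in which only the $N$ singular vertices can contribute. Here $d=4$. Fix a generic l.s.o.p.\ $\Theta$ for $\field[\Delta]$ and a generic one-form $\omega\in R_1$, so that Theorem~\ref{rigidity} applies. By parts~(\ref{rigidity h2}) and~(\ref{rigidity h3}) of that theorem, $\dim_\field\field(\Delta)_2=h_2$ and $\dim_\field\field(\Delta)_3\ge h_3$. Since $\omega\cdot\field(\Delta)_2$ is a quotient of $\field(\Delta)_2$, it has dimension at most $h_2$, and therefore
\[
\dim_\field\operatorname{coker}\!\bigl(\cdot\omega:\field(\Delta)_2\to\field(\Delta)_3\bigr)
 = \dim_\field\field(\Delta)_3-\dim_\field\!\bigl(\omega\field(\Delta)_2\bigr)
 \ge h_3-h_2 = g_3 .
\]
Thus it is enough to bound $\dim_\field\operatorname{coker}(\cdot\omega:\field(\Delta)_2\to\field(\Delta)_3)$ from above by $\binom{N-3}{3}$.

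For the upper bound I would apply the cokernel lemma (Lemma~\ref{cokernel}) to the map $\cdot\omega:\field(\Delta)_2\to\field(\Delta)_3$. The set $V$ occurring there consists of the vertices $v$ for which no generic choice of one-form and l.s.o.p.\ makes $\cdot\omega_v:\field(\lk v)_1\to\field(\lk v)_2$ surjective. If $v$ is not singular, then $\lk v$ is a simplicial $2$-sphere, so by Steinitz's theorem it is the boundary complex of a simplicial $3$-polytope; such complexes carry Lefschetz elements, so for generic data $\cdot\omega_v:\field(\lk v)_1\to\field(\lk v)_2$ is an isomorphism, in particular surjective, and $v\notin V$. (Equivalently, $h_1(\lk v)=h_2(\lk v)$ and the graph of $\lk v$ is generically rigid, which forces that map to be an isomorphism.) Hence $V$ is contained in the set of singular vertices, so $|V|\le N$. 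The cokernel lemma then bounds $\dim_\field\operatorname{coker}(\cdot\omega:\field(\Delta)_2\to\field(\Delta)_3)$ by the number of cubic monomials in a polynomial ring on $|V|-d-1\le N-5$ variables, which is at most $\binom{(N-5)+2}{3}=\binom{N-3}{3}$; when $N\le 5$ this count is $0$, in agreement with the conclusion $g_3\le 0$. Combined with the displayed inequality this gives $g_3\le\binom{N-3}{3}$.

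The only step that genuinely needs care is the invocation of the cokernel lemma: one must follow the degree shift in the natural surjections $\field(\lk v)_{k}\twoheadrightarrow(x_v)_{k+1}\subseteq\field(\Delta)$, so that the surjectivity tested on the links is precisely that of $\cdot\omega_v:\field(\lk v)_1\to\field(\lk v)_2$ and the final estimate is a count of cubic monomials in $|V|-d-1$ variables. Everything else is already available above: that the links of nonsingular vertices are polytopal $2$-spheres, that such spheres have Lefschetz elements over an arbitrary infinite field, and parts~(\ref{rigidity h2}) and~(\ref{rigidity h3}) of the rigidity inequality. No restriction on $\Char\field$ is needed, since both of those inputs hold over any infinite field.
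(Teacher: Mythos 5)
Your proposal is correct and follows essentially the same route as the paper: nonsingular vertex links are polytopal $2$-spheres, hence have surjective generic multiplication $\field(\lk v)_1\to\field(\lk v)_2$, so the cokernel lemma (with the degree shift read off the surjections $\field(\lk v)\twoheadrightarrow (x_v)$, giving a count of degree-three monomials in $|V|-d-1\le N-5$ variables, i.e.\ $\binom{N-3}{3}$) bounds $\dim_\field\field(\Delta)_3-\dim_\field\field(\Delta)_2$, and parts (\ref{rigidity h2}) and (\ref{rigidity h3}) of Theorem \ref{rigidity} convert this into $g_3\le\binom{N-3}{3}$. The only difference is expository: you spell out the sandwich $g_3=h_3-h_2\le\dim_\field\field(\Delta)_3-\dim_\field\field(\Delta)_2\le\dim_\field\operatorname{coker}(\cdot\omega)$, which the paper leaves implicit.
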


\begin{proof}
Suppose $v$ is a nonsingular vertex of the triangulation.  Every two-sphere is a polytope, so the $g$-theorem \cite{St80} implies that for generic $\omega$ and l.s.o.p. $\Theta$ multiplication $\cdot \omega: \field(\lk v)_1 \to \field(\lk v)_2$ is a surjection.  By the cokernel lemma $\dim_\field \field(\Delta)_3 - \dim_\field \field(\Delta)_2 \le {N-3 \choose 3}.$  The theorem now follows from Theorem \ref{rigidity} (\ref{rigidity h2}) and (\ref{rigidity h3}). 
\end{proof}

\section{PL-spheres} \label{PL-spheres}

One of the most intriguing classes of spheres are  PL-spheres.  One reason reason for this is that when studying triangulations of smooth compact manifolds these are precisely the links of the vertices that occur \cite{whitehead}.  There are several approaches one can take toward defining PL-spheres.  One is to define   a PL-sphere as a simplicial complex $\Delta$ which has a common stellar subdivision with the boundary of the simplex.  In that direction Babson and Nevo proved that if $\Delta'$ is obtained from $\Delta$ by a stellar subdivision on a face $\sigma$ and both $\R[\Delta]$ and $\R[\lk \sigma]$ have Lefschetz elements, then $\R[\Delta]$ also has Lefschetz elements \cite{BN}.  If one could prove that inverse stellar subdivisions also have this property then we would know that face rings of PL-spheres have Lefschetz elements.  As an alternative we will take Pachner's characterization of PL-spheres via bistellar moves as our definition.  

Let $A$ and $B$ be disjoint subsets of the vertices of $\Delta.$  Assume that $|A| + |B| = d+1$  and that the vertex induced subcomplex of $\Delta$ on $A \cup B$ is $A \star \partial B,$ the join of the simplex whose vertices are $A$ and the boundary of the simplex whose vertices are $B.$  The {\bf join} of two vertex disjoint simplicial complexes $\Delta'$ and $\Delta''$ is denoted $\Delta' \star \Delta''$ and is equal to $\{\sigma \cup \tau: \sigma \in \Delta', ~\tau \in \Delta''.\}$  A $(|B|-1)$-bistellar move consists of removing $A \star \partial B$ and replacing it with $\partial A \star B.$  A PL-sphere is any complex that can be obtained from the boundary of a simplex by a sequence of bistellar moves \cite{Pac}.  Since $\field[\partial \Delta^d]$ has Lefschetz elements, one obvious approach to the $g$-conjecture for PL-spheres is to show that the existence of weak Lefschetz elements  is preserved by bistellar moves.  The main result of this section is that if this is not true, then it is false in an `interesting' way.  See the paragraph following Theorem \ref{odd PL}.

\begin{theorem} \label{even PL}
Let $\Delta$ be a $2m$-dimensional $\field$-homology sphere and suppose that  $\Delta^\prime$ is obtained from $\Delta$ via a bistellar move with $|A| \neq |B|.$ Then $\field[\Delta^\prime]$  has weak Lefschetz elements if and only if $\field[\Delta]$ has weak Lefschetz elements.  
\end{theorem}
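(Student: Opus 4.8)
The plan is to prove that both sides of the asserted equivalence reduce to one and the same statement about the complex $\Delta_0$ obtained by deleting the open star of the flipped face. Since the $(B,A)$-bistellar move carries $\Delta'$ back to $\Delta$, the statement is symmetric, so I may assume $|A|>|B|$; write $d=2m+1$, so that $|A|+|B|=2m+2$ and $|A|\ge m+2$, $|B|\le m$. From the definition of a bistellar move, $\operatorname{cost}_\Delta A=\operatorname{cost}_{\Delta'} B=:\Delta_0$, the closed star of $A$ in $\Delta$ is $A\star\partial B=:C_1$ (its link being $\partial B$) and the closed star of $B$ in $\Delta'$ is $\partial A\star B=:C_2$ — precisely the removed and added pieces. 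Each $C_i$ is a cone, hence a homology ball, hence Cohen--Macaulay of Krull dimension $d$; $\Delta_0$ is a homology $(d-1)$-ball (a homology sphere with the open star of a face having a sphere link removed), so $\field[\Delta_0]$ is Cohen--Macaulay of Krull dimension $d$; and $\field[\Delta]$, $\field[\Delta']$ are Gorenstein, the latter because $\Delta'$, being two homology balls glued along their common boundary sphere, is again an $\field$-homology sphere.

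The engine will be the standard short exact sequence $0\to(x_A)\to\field[\Delta]\to\field[\Delta_0]\to 0$ of modules over the ambient polynomial ring $R$, together with the isomorphism $(x_A)\cong\field[C_1](-|A|)$ (the ideal generated by $x_A=\prod_{v\in A}x_v$ is, up to this shift, the face ring of the closed star of $A$), and likewise $0\to\field[C_2](-|B|)\to\field[\Delta']\to\field[\Delta_0]\to 0$. I would fix a generic l.s.o.p.\ $\Theta$ (simultaneously generic for the finitely many face rings in sight). Since $\field[\Delta_0]$ is Cohen--Macaulay of dimension $d$, $\mathrm{Tor}^R_1(\field[\Delta_0],R/\Theta)=0$, so tensoring with $R/\Theta$ keeps both sequences exact:
$$0\to\field(C_1)(-|A|)\to\field(\Delta)\to\field(\Delta_0)\to 0,\qquad 0\to\field(C_2)(-|B|)\to\field(\Delta')\to\field(\Delta_0)\to 0.$$
By the Gorenstein duality recalled in the introduction, $\field[\Delta]$ has weak Lefschetz elements if and only if, for generic $\omega$, the map $\cdot\omega\colon\field(\Delta)_m\to\field(\Delta)_{m+1}$ is surjective, and similarly for $\Delta'$. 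Since $C_1=A\star\partial B$ and $C_2=\partial A\star B$ have $h$-vectors $(1,1,\dots,1)$ of lengths $|B|$ and $|A|$ respectively (the $h$-polynomial of a join of a simplex with a simplex boundary), one gets $\dim_\field\field(C_1)_i=1$ exactly for $0\le i\le|B|-1$ and $\dim_\field\field(C_2)_i=1$ exactly for $0\le i\le|A|-1$.

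Now $\field(C_1)(-|A|)$ is supported in degrees $[|A|,d]$ and, since $|A|\ge m+2$, vanishes in degrees $m$ and $m+1$ — this is the one point where the hypothesis $|A|\ne|B|$ enters. Hence the first sequence identifies $\field(\Delta)_m\cong\field(\Delta_0)_m$ and $\field(\Delta)_{m+1}\cong\field(\Delta_0)_{m+1}$ compatibly with multiplication by $\omega$, so
$$\operatorname{coker}\!\big[\cdot\omega\colon\field(\Delta)_m\to\field(\Delta)_{m+1}\big]\ \cong\ \operatorname{coker}\!\big[\cdot\omega\colon\field(\Delta_0)_m\to\field(\Delta_0)_{m+1}\big].$$
For the second sequence, $\field(C_2)(-|B|)$ is one-dimensional in degrees $m$ and $m+1$ (because $|B|\le m$ and $|A|+|B|=d+1$), and $\cdot\omega\colon\field(C_2)_{m-|B|}\to\field(C_2)_{m+1-|B|}$ is an isomorphism for generic $\omega$, since $C_2$ is a cone over the simplex boundary $\partial A$, so $\field(C_2)\cong\field[t]/(t^{|A|})$ for generic data and the map is multiplication by a nonzero scalar multiple of $t$ between two degrees below the socle. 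Feeding this into the snake lemma applied to the corresponding ladder of exact sequences — whose left vertical map is now an isomorphism — yields the same identification for $\Delta'$. Both cokernels are thus isomorphic to $\operatorname{coker}[\cdot\omega\colon\field(\Delta_0)_m\to\field(\Delta_0)_{m+1}]$ for generic $(\Theta,\omega)$, so one vanishes precisely when the other does, which is the claim.

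I expect the only real content to be the degree bookkeeping of the previous paragraph, and this is exactly where the restriction $|A|\ne|B|$ is essential: when $|A|=|B|=m+1$ the module $\field(C_1)(-|A|)$ is one-dimensional in degree $m+1$, contributing a genuine socle element of $\field(\Delta)_{m+1}$ that is invisible to $\Delta_0$, the clean identification fails, and controlling it is tantamount to the full $g$-conjecture for PL-spheres — so the hard case is simply out of reach. A minor technicality to dispose of is that when $|B|=1$ (stacking) or $|A|=1$ (inverse stacking) the vertex sets of $\Delta$ and $\Delta'$ differ by one vertex; there one runs the argument over the two relevant polynomial rings, using that the image of a generic l.s.o.p.\ in $\field[\Delta_0]$ is generic either way, or, more cheaply, observes that $\Delta'$ is then a connected sum of $\Delta$ with $\partial\Delta^d$, for which the conclusion is immediate.
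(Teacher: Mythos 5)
Your argument is correct, and it ends up at the same place as the paper's proof --- everything reduces to degree bookkeeping in the two join pieces, with $|A|\neq|B|$ guaranteeing they are invisible (or harmlessly one-dimensional with $\cdot\omega$ an isomorphism) in degrees $m$ and $m+1$ --- but you get there by the dual decomposition. The paper uses $0\to I\to\field[\Delta]\to\field[A\star\partial B]\to 0$, identifies the kernel $I$ with the canonical module of the complementary ball $D$ via Hochster's theorem, and needs Gr\"abe's relation $g_i(\partial D)=h_i(D)-h_{d-i}(D)$ together with a dimension count to see $I/(I\cap\Theta)\simeq I/\Theta I$; the bridge between $\Delta$ and $\Delta'$ is that $I$ and $I'$ are the same canonical module. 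You instead quotient onto the common complement: $0\to(x_A)\to\field[\Delta]\to\field[\Delta_0]\to 0$ with the elementary identification $(x_A)\simeq\field[\Star A](-|A|)$, and likewise for $\Delta'$, so both spheres surject onto the same ring $\field(\Delta_0)$ and the comparison of cokernels is manifestly symmetric; exactness after passing to the Artinian reductions comes from Tor-vanishing because the quotient $\field[\Delta_0]$ is Cohen--Macaulay. This trades the canonical-module machinery for a completely elementary kernel, which is a genuine simplification, and your verification that $\cdot\omega$ is an isomorphism on $\field(C_2)$ in the relevant degrees is airtight (a standard graded algebra with Hilbert function $(1,\dots,1)$ is $\field[t]/(t^{|A|})$, and generic $\omega$ maps to a nonzero multiple of $t$). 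Two small remarks: the shortcut you offer for the $|B|=1$ case via connected sum with $\partial\Delta^d$ only yields one implication directly from Theorem \ref{connected sum}, so rely on your primary fix of running the argument over the two ambient polynomial rings, which does work (the paper silently ignores this vertex-set issue); and the paper phrases the middle-degree condition as an isomorphism rather than surjectivity, but since $d=2m+1$ is odd these agree by Dehn--Sommerville, so your formulation matches.
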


\begin{proof}
Throughout we assume that $\Theta$ is the ideal of $R$ generated by a generic l.s.o.p. and $\omega$ a generic one-form. 
Let $D$ be the closure of the complement of $A \star \partial B.$  A Mayer-Vietoris argument shows that $D$ is an $\field$-homology ball.  Consider the short exact sequence
\begin{equation} \label{PL ses}
0 \to I \to \field[\Delta] \to \field[A \star \partial B] \to 0.
\end{equation}
As an $R$-module the ideal $I$ is isomorphic to the ideal of $\field[D]$ generated by the interior faces of $D.$ By a theorem of Hochster \cite[II. Theorem 7.3]{St96} $I$ is isomorphic to the canonical module of $\field[D].$  Therefore, $\dim (I/ \Theta I) = h_{d-i}(D).$ If we quotient out by $\Theta$ in (\ref{PL ses}) we obtain the short exact sequence
\begin{equation} \label{PL lsop}
0 \to I/(I \cap \Theta) \to \field(\Delta) \to \field(A \star \partial B) \to 0.
\end{equation}
Thus $\dim I/(I \cap \Theta)_i = h_i(\Delta) -  h_i(A \star \partial B).$ Now,  $\Delta = D \cup (A \star \partial B)$ and $ D \cap (A \star \partial B) = \partial D.$ So $h_i(\Delta) = h_i(D) + h_i(A \star \partial B) - g_i(\partial D).$ Since $D$ is a homology ball, $g_i(\partial D) = h_i(D) - h_{d-i}(D)$ \cite{Grabe} and we conclude that $\dim I/(I \cap \Theta)_i = h_{d-i}(D).$ As there is a natural surjection from $I/ \Theta I \to I/(I \cap \Theta)$ we see that $I/(I \cap \Theta) \simeq I/\Theta I.$  

Now we repeat the argument for $\Delta^\prime,$
\begin{equation} \label{PL ses'}
0 \to I^\prime \to \field[\Delta^\prime] \to \field[\partial A \star  B] \to 0.
\end{equation}
The same reasoning as above show that $I^\prime$ is also isomorphic to the canonical module of $D$ and hence $I^\prime/ (\Theta \cap I^\prime) \simeq I/ (\Theta \cap I).$  In particular, if the  multiplication map
\begin{equation}  \label{lhs}
\cdot \omega: (I/ \Theta \cap I)_m \to (I/ \Theta \cap I)_{m+1}
\end{equation}
is an isomorphism, then
\begin{equation}  \label{lhs'}
\cdot \omega: (I^\prime/ \Theta \cap I^\prime)_m \to (I^\prime/ \Theta \cap I^\prime)_{m+1}
\end{equation}
is also an isomorphism.  

Look at the right-hand side of (\ref{PL lsop}) and the corresponding sequence for $\Delta^\prime.$  As long as $|A| \neq |B|$ the multiplication maps 
\begin{equation}  \label{rhs}
\cdot \omega: \field(A \star \partial B)_m \to \field(A \star \partial B)_{m+1}
\end{equation}
and
\begin{equation} \label{rhs'}
\cdot \omega: \field(\partial A \star B)_m \to \field(\partial A \star B)_{m+1}
\end{equation}
are both isomorphisms for generic choices of l.s.o.p and $\omega.$  Putting all this together, if generic choices of $\omega$ and $\Theta$ give isomorphisms $\cdot \omega: \field(\Delta)_m \to \field(\Delta)_{m+1}$, then (\ref{lhs}) and (\ref{rhs}) are isomorphisms.  So (\ref{lhs'}) and (\ref{rhs'}) are isomorphisms and  $\cdot \omega: \field(\Delta^\prime)_m \to \field(\Delta^\prime)_{m+1}$ is an isomorphism.
\end{proof}

The corresponding statement for odd-dimensional homology spheres is the following.  The proof is in the same spirit as  above with surjectivity between degrees $m$ and $m+1$ replacing isomorphism.  

\begin{theorem}  \label{odd PL}
Let $\Delta$ be a $2m-1$-dimensional $\field$-homology sphere and suppose $\Delta^\prime$ is obtained from $\Delta$ via a bistellar move with $|A| \neq m.$ Then $\field[\Delta^\prime]$  has weak Lefschetz elements if and only if $\field[\Delta]$ has weak Lefschetz elements.
\end{theorem}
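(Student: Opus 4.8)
The plan is to run the argument of Theorem~\ref{even PL} almost verbatim, with ``isomorphism in degrees $m$ and $m+1$'' replaced by ``surjectivity in degrees $m$ and $m+1$''; since here $d=2m$ is even, surjectivity of $\cdot\omega\colon\field(\Delta)_m\to\field(\Delta)_{m+1}$ is exactly what weak Lefschetz-ness of a $(2m-1)$-dimensional homology sphere amounts to, by the Gorenstein property of $\field[\Delta]$. First I would reproduce the homological setup: let $D$ be the closure of the complement of $A\star\partial B$ in $\Delta$, so that $D$ is an $\field$-homology ball left unchanged by the move, and let $I=\ker(\field[\Delta]\to\field[A\star\partial B])$, which by Hochster's theorem is the canonical module of $\field[D]$. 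The same $h$-vector bookkeeping as in Theorem~\ref{even PL}, via Gr\"abe's identity $g_i(\partial D)=h_i(D)-h_{d-i}(D)$, gives $L:=I/(I\cap\Theta)\cong I/\Theta I\cong\omega_{\field(D)}$ with $\dim_\field L_i=h_{d-i}(D)$, and the identical computation for $\Delta'$ produces the corresponding submodule $L'\cong\omega_{\field(D)}$. Thus $L$ and $L'$ are isomorphic as graded modules carrying the same action of the generic one-form $\omega$, and everything is reduced to whether $\cdot\omega\colon L_m\to L_{m+1}$ is surjective.

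The new ingredient, which replaces the role of the hypothesis $|A|\neq|B|$ in the even case, is a direct computation of the ``quotient'' multiplications. Since $A\star\partial B$ is a simplex joined to the boundary of a simplex, $\field(A\star\partial B)\cong\field[y]/(y^{|B|})$, and a generic $\omega$ acts there as multiplication by a nonzero scalar times $y$. Hence $\cdot\omega\colon\field(A\star\partial B)_m\to\field(A\star\partial B)_{m+1}$ is \emph{always} surjective --- in Theorem~\ref{even PL} the analogous map had to be bijective, which is what forced the case $|A|=|B|$ to be excluded, but in odd dimension no hypothesis is needed for mere surjectivity. Its kernel, however, is nonzero exactly when $m+1=|B|$, i.e. when $|A|=m$; so under the hypothesis $|A|\neq m$ this map is in fact an isomorphism. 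Symmetrically, $\field(\partial A\star B)\cong\field[z]/(z^{|A|})$, and $\cdot\omega$ on it in degrees $m,m+1$ is surjective with trivial kernel precisely when $|A|\neq m+1$.

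Given these computations, one implication is immediate. Suppose $\field[\Delta]$ has weak Lefschetz elements, so $\cdot\omega\colon\field(\Delta)_m\to\field(\Delta)_{m+1}$ is surjective. Apply the snake lemma to $0\to L\to\field(\Delta)\to\field(A\star\partial B)\to 0$ in degrees $m\to m+1$: the middle map is surjective, the right-hand map is surjective with trivial kernel --- this is where $|A|\neq m$ enters --- so the connecting map vanishes and $\cdot\omega\colon L_m\to L_{m+1}$ is surjective. Transporting across $L\cong L'$ and feeding this into $0\to L'\to\field(\Delta')\to\field(\partial A\star B)\to 0$, whose right-hand multiplication is again surjective, the cokernel part of the snake sequence forces $\cdot\omega\colon\field(\Delta')_m\to\field(\Delta')_{m+1}$ surjective, i.e. $\field[\Delta']$ has weak Lefschetz elements. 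Only $|A|\neq m$ is used here.

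The converse is the delicate half, and I expect it to be the main obstacle. Rerunning the snake argument for $\Delta'$ needs the kernel of $\cdot\omega$ on $\field(\partial A\star B)$ in degree $m$ to vanish, which holds iff $|A|\neq m+1$ --- exactly the condition one would get by applying the forward implication to the inverse bistellar move, whose parameters interchange $|A|$ and $|B|$. When $|A|=m+1$ (equivalently $|B|=m$) this fails: there $\field(A\star\partial B)$ vanishes in degrees $m$ and $m+1$, so $\field(\Delta)$ agrees with $L$ in those degrees, whereas $\field(\Delta')_m$ is $L'_m$ enlarged by the one-dimensional socle of $\field(\partial A\star B)$. Then weak Lefschetz-ness of $\field[\Delta']$ only forces $\cdot\omega\colon L_m\to L_{m+1}$ to have at most one-dimensional cokernel, and to finish one must show the connecting homomorphism from $\soc\field(\partial A\star B)_m$ into that cokernel is zero --- equivalently, dualizing $L\cong\omega_{\field(D)}$, that $\cdot\omega\colon\field(D)_{m-1}\to\field(D)_m$ is injective on the relevant piece. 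This is not a formal consequence of the setup, so the proof must bring in extra information about $D$, or about how that socle direction sits inside the Gorenstein ring $\field(\Delta')$ (whose own socle lies in degree $d=2m$, not degree $m$). Pinning down this last point is where I expect the real work, and it is presumably what the precise hypothesis of the theorem is calibrated to reflect.
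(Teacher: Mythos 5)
Your plan is exactly the paper's: the proof given for Theorem~\ref{odd PL} is nothing more than ``run the proof of Theorem~\ref{even PL} with surjectivity between degrees $m$ and $m+1$ in place of isomorphism,'' and your execution of that plan --- identifying $L\cong L'$ with the canonical module of $\field[D]$ modulo a generic l.s.o.p., computing $\field(A\star\partial B)\cong\field[y]/(y^{|B|})$ and $\field(\partial A\star B)\cong\field[z]/(z^{|A|})$, and doing the snake-lemma bookkeeping --- is correct as far as it goes. In particular the forward implication, which uses only $|A|\neq m$, is complete and is what the paper intends.

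The difficulty you flag in the converse is not something you should expect to repair with ``extra information about $D$''; it reflects a genuine asymmetry in the literal statement. The diagram chase for the converse needs the degree-$m$ kernel of $\cdot\omega$ on $\field(\partial A\star B)$ to vanish, i.e. $|A|\neq m+1$ (equivalently $|B|\neq m$), which, as you observe, is exactly the hypothesis of the forward implication applied to the inverse move. If the ``if and only if'' really held assuming only $|A|\neq m$, then for any bistellar move between $(2m-1)$-dimensional spheres either the move or its inverse would satisfy the hypothesis, so weak Lefschetz-ness would be preserved by \emph{every} bistellar move; by Pachner's theorem every odd-dimensional PL-sphere would then have weak Lefschetz elements --- precisely the $g$-conjecture the paper treats as open (see the coloring discussion after the theorem, and the remark that odd-dimensional PL-spheres are to be reached from the even-dimensional case via the cokernel lemma, not from Theorem~\ref{odd PL}). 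So the hypothesis must be read symmetrically, as excluding the two ``middle'' moves with $\{|A|,|B|\}=\{m,m+1\}$; under $|A|\neq m$ and $|B|\neq m$ your argument closes both directions and coincides with the paper's. The case $|A|=m+1$ in your last paragraph, where weak Lefschetz-ness of $\field[\Delta']$ only bounds the cokernel of $\cdot\omega\colon L_m\to L_{m+1}$ by one, is not handled by the paper's argument either, and settling it would amount to proving the very statement this section is circling.
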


One way to think about PL-triangulations of $S^{d-1}$ is through a graph whose vertices are the triangulations and edges represent bistellar moves.  Pachner proved that for fixed $d$ this graph is connected \cite{Pac}.  Suppose we color the vertices green for those triangulations whose face rings have weak Lefschetz elements and red otherwise.  The two previous theorems tell us that the colors can only change along edges with represent very specific bistellar moves.

Another consequence of Theorem \ref{even PL} is that it is sufficient to verify that for even-dimensional spheres the bistellar move with $|A|=|B|$ preserves the existence of weak Lefschetz elements in order to establish their existence for all PL-spheres.  The step to odd-dimensional PL-spheres can be accomplished using the cokernel lemma.

\section{Few vertices. Few edges.} \label{small g2}

In this section we demonstrate several techniques that will allow us to show that  if $\Delta$ has few vertices or few edges, then it satisfies all, or at least part, of the $g$-conjecture.  Throughout this section $d \ge 5.$  We start by considering simplicial spheres with few vertices.      Mani proved that any simplicial sphere with $d+3$ vertices is polytopal \cite{Man} and hence satisfies the $g$-theorem. Thus the first nontrivial case is when $\Delta$ is a  simplicial sphere with $d+4$ vertices.  This means that $g_2 \le 6,$ with $g_2=6$ indicating a two-neighborly triangulation.

\begin{theorem} \label{few vertices}
Suppose $\Delta$ is a simplicial sphere with $d+4$ vertices and $g_2 \le 5.$  Then for generic l.s.o.p. $\Theta$ and one-form $\omega$ multiplication $\cdot \omega: \C(\Delta)_{\lceil d/2 \rceil} \to \C(\Delta)_{\lceil d/2 \rceil +1}$ is a surjection.  Therefore the $g$-vector of $\Delta$ is an M-vector.    
\end{theorem}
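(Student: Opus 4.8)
The plan is to push the entire statement onto the vertex links of $\Delta$ via the cokernel lemma (Lemma~\ref{cokernel}). First, $g_2\le 5$ merely records that $\Delta$ is not $2$-neighborly, i.e.\ has a missing edge, since a simplicial sphere with $d+4$ vertices has $g_2=6$ exactly when it is $2$-neighborly. Next, the asserted surjectivity does force ``the $g$-vector is an $M$-vector'': $\field[\Delta]$ is Gorenstein with socle degree $d$, so surjectivity of $\cdot\omega:\C(\Delta)_{\lceil d/2\rceil}\to\C(\Delta)_{\lceil d/2\rceil+1}$ is Poincar\'e-dual to injectivity of $\cdot\omega:\C(\Delta)_{\lfloor d/2\rfloor-1}\to\C(\Delta)_{\lfloor d/2\rfloor}$, and in a Gorenstein ring injectivity of $\cdot\omega$ propagates to all lower degrees (a cokernel one step below would be a socle element); hence $\dim_\field(\C(\Delta)/(\omega))_i=g_i$ for $i\le\lfloor d/2\rfloor$, and the $M$-vector property is Macaulay's theorem applied to the polynomial-ring quotient $\C(\Delta)/(\omega)$. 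So the real task is the surjectivity, and by Lemma~\ref{cokernel} it suffices to find three vertices $v$ for which generic $\Theta_v,\omega_v$ give a surjection $\C(\lk v)_{\lceil d/2\rceil-1}\to\C(\lk v)_{\lceil d/2\rceil}$: the exceptional set then has at most $(d+4)-3=d+1$ vertices and the cokernel bound is empty.

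Call $v$ \emph{universal} if it is joined by an edge to every other vertex of $\Delta$. If $v$ is not universal, then $\lk v$ is a $(d-2)$-sphere on at most $(d-1)+3$ vertices, hence polytopal by Mani~\cite{Man} and so has Lefschetz elements; in particular $\cdot\omega$ is surjective on $\C(\lk v)$ from degree $\lfloor(d-1)/2\rfloor=\lceil d/2\rceil-1$ onward, and $v$ is good. Since $\Delta$ is not $2$-neighborly it has at least two non-universal vertices, and at least three unless $\Delta$ has a unique missing edge $\{a,b\}$. So assume the latter; then $a,b$ are good, and we must produce one more good vertex among the $d+2$ universal ones. The key observation here is that if $\Delta$ is not $2$-neighborly and $v$ is universal, then $\lk v$ is again not $2$-neighborly: were every pair in $V\setminus\{v\}$ an edge of $\lk v$, every triangle $\{x,y,v\}$ would lie in $\Delta$, hence every $\{x,y\}$ would be an edge of $\Delta$, and with $v$ universal $\Delta$ would be $2$-neighborly.

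Thus each such $\lk v$ is a $(d-2)$-sphere on $(d-1)+4$ vertices that is not $2$-neighborly, and I would induct on $d$. The base case $d=5$ is handled directly: the links in question are $3$-spheres $L$, and since $\field[L]$ is Gorenstein of socle degree $4$, the injectivity $\cdot\omega:\C(L)_1\to\C(L)_2$ furnished by the rigidity inequality (Theorem~\ref{rigidity}(4)) is dual to surjectivity $\cdot\omega:\C(L)_2\to\C(L)_3$, i.e.\ a weak Lefschetz element for $L$. For the inductive step, the theorem with parameter $d-1$ applies to $L=\lk v$ and yields a surjection $\C(L)_{\lceil(d-1)/2\rceil}\to\C(L)_{\lceil(d-1)/2\rceil+1}$, hence surjectivity in all degrees $\ge\lceil(d-1)/2\rceil$.

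The main obstacle is a parity mismatch between $\lceil(d-1)/2\rceil$ and $\lceil d/2\rceil$. If $d$ is odd, then $\lceil(d-1)/2\rceil=\lceil d/2\rceil-1$, so the inherited surjection of each universal link sits in exactly the degree the cokernel lemma demands, every vertex of $\Delta$ is good, and the surjectivity (hence the theorem) follows at once. If $d$ is even, then $\lceil(d-1)/2\rceil=\lceil d/2\rceil$, so the induction only controls $\C(L)_{d/2}\to\C(L)_{d/2+1}$ --- one degree too high --- while the degree we need, $\C(L)_{d/2-1}\to\C(L)_{d/2}$, is precisely the weak Lefschetz map of the \emph{even}-dimensional sphere $L$ (a $4$-sphere on $9$ vertices when $d=6$), which is not known in general. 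Two ways around this: (i) even so, the cokernel lemma bounds the cokernel of $\cdot\omega:\C(\Delta)_{d/2}\to\C(\Delta)_{d/2+1}$ by the number of degree-$(d/2)$ monomials in $(d+2)-d-1=1$ variable, namely by $1$, and a one-dimensional defect still yields the $M$-vector conclusion, since the Hilbert function of $\C(\Delta)/(\omega)$ is an $M$-vector, it already equals the $g$-vector below degree $d/2$, and truncation then forces $g_{d/2}\le g_{d/2-1}^{<d/2-1>}$; (ii) to obtain the stated surjectivity on the nose one can split $\Delta=(a\star\lk a)\cup D$ along the closed star of $a$ --- here $\lk a$ is polytopal on only $(d-1)+3$ vertices and $D=\overline{\Delta\setminus\Star a}$ is the Cohen--Macaulay $(d-1)$-ball on $d+3$ vertices induced on $V\setminus\{a\}$ --- and run the short exact sequence of the proof of Theorem~\ref{even PL}, in which Hochster's theorem identifies the kernel with the canonical module of $\field[D]$, reducing surjectivity of $\cdot\omega$ on $\C(\Delta)$ to the known one for $\C(\lk a)$ together with a Lefschetz-type injectivity for $\C(D)$. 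I expect that last injectivity for the ball $D$ to be the main technical hurdle.
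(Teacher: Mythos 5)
Your opening reductions coincide with the paper's: the duality argument showing that the asserted surjection forces the $g$-vector to be an M-vector is fine, and, as in the paper, Mani's theorem \cite{Man} plus the $g$-theorem show that any non-universal vertex has a polytopal link and hence supplies the surjection that Lemma \ref{cokernel} asks for, so only universal vertices can be bad. The genuine gap is in how you then deal with the universal vertices. The paper does not induct on $d$ at all. Instead it argues by contradiction: if the surjection fails, Lemma \ref{cokernel} forces at least $d+2$ bad vertices, all universal; if there are exactly $d+2$, the proof of the cokernel lemma shows the induced subcomplex on them is $(m+1)$-neighborly (writing $d=2m$ or $2m+1$), and this is killed by the van Kampen--Flores theorem \cite{Flo}, \cite{Van}, since such a complex (after coning, in the even case) cannot embed in $S^{d-1}$; while $d+3$ or more universal vertices would make $\Delta$ two-neighborly, i.e. $g_2=6$. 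This embedding obstruction is the idea your proposal is missing.

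Your substitute --- induction on $d$ applied to the links of universal vertices --- fails exactly at the parity mismatch you identify, and neither workaround closes it. Workaround (i) does not prove the surjectivity that Theorem \ref{few vertices} actually asserts; moreover your claim that the Hilbert function of $\C(\Delta)/(\omega)$ ``already equals the $g$-vector below degree $d/2$'' is unjustified as written (it needs the cokernel lemma in all higher degrees together with Gorenstein duality), and even granting the M-vector conclusion at even parameters, that weaker statement is useless as an induction hypothesis: for odd $d$ a universal vertex can be counted as good in Lemma \ref{cokernel} only if its link is genuinely surjective in degree $(d-1)/2$, so the induction collapses at the first even value and nothing beyond your base case $d=5$ is established ($d=7$ needs $d=6$, etc.). Workaround (ii) is an unproved reduction: the Lefschetz-type injectivity for the canonical module of the ball $D$ on $d+3$ vertices is essentially the open problem you started with. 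So as it stands the proposal proves the theorem only for $d=5$; the even-dimensional case, and with it the whole induction, needs the van Kampen--Flores argument (or some replacement for it) that the paper uses.
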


\begin{proof}
Let us say that $\C(\Delta)$ has very weak Lefschetz elements if there exist $\Theta$ and $\omega$ which satisfy the conclusion of the theorem.  The only difference between very weak Lefschetz elements and weak Lefschetz elements is that if $d$ is odd, then multiplication from degree $(d-1)/2$ to $(d+1)/2$ may not be an isomorphism.  The same argument that weak Lefschetz elements guarantee that the $g$-vector is an M-vector works for very weak Lefschetz elements.  

Suppose $\Delta$ has $d+4$ vertices and does not have very weak Lefschetz elements.  Write $d=2m$ or $d=2m+1$ depending its parity.  
Since $\Delta$ has $d+4$ vertices any vertex which does not have all of the other vertices in its link is polytopal and therefore has weak Lefschetz elements.  Call any vertex whose link does not have weak Lefschetz elements bad.   The cokernel lemma implies that $\Delta$ must have at least $d+2$ bad vertices.  In fact, there must be at least $d+3$ bad vertices.  The proof of the cokernel lemma shows that if there are only $d+2$ bad vertices then the induced subcomplex on the bad vertices is $m+1$-neighborly.  When $d$ is odd the classical van Kampen-Flores theorem \cite{Flo}, \cite{Van} forbids the embedding of the $m$-skeleton of the $(2m+2)$-simplex in $\R^{2m}.$ When $d$ is even it is still impossible  to embed an $m+1$-neighborly complex with $d+2$ vertices in $\R^{2m-1}$ as its cone would embed in $\R^{2m}$ again violating  van Kampen-Flores.    Of course, if $d+3$ vertices  have every other vertex in their link, $\Delta$ must be two-neighborly and $g_2 = 6.$
\end{proof}

\begin{remark} \label{KS} Kalai and Sarkaria have another approach to proving PL-spheres have  weak Lefschetz elements based on the van Kampen-Flores obstruction.  They conjecture that if the van Kampen-Flores obstruction to embedding an $m$-dimensional complex $\Delta$ in $\R^{2m}$ is zero, then  $\C(\Delta)/(\omega)_{m+1}$ is zero \cite[Conjecture 27]{Ka02}. This would imply the existence of weak Lefschetz elements for even-dimensional PL-spheres and the cokernel lemma would prove their existence for odd-dimensional PL-spheres.  
\end{remark}

What about when $\Delta$ has few edges?   The fewest number of edges given a fixed number of vertices is governed by the rigidity inequality $g_2 \ge 0.$  The rest of this section is devoted to proving that if $\Delta$ is a rational homology sphere and $d \ge 6,$ then $(1,g_1,g_2,g_3)$ is an M-vector whenever  $g_2 \le 6.$  In particular this holds for $d+4$ vertices.  Along the way we will find that in some lower dimensions we can do a little better.   

In preparation for our journey we will need several preliminary results.  The first concerns facet connected sum.  Let $\Delta_1$ and $\Delta_2$ be two $(d-1)$-dimensional simplicial complexes with disjoint vertex sets.  Given facets   $\sigma_1$ of $\Delta_1$ and $\sigma_2$ of $\Delta_2,$ and a simiplicial isomorphism $\phi:\sigma_1 \to \sigma_2,$ the facet connected sum $\Delta_1 \#_\phi~\Delta_2$ is formed by identifying $\sigma_1$ and $\sigma_2$ using $\phi$ and removing the open face corresponding to the identified facets $\sigma_1$ and $\sigma_2.$  For example, a bistellar $0$-move is the same as taking a facet connected sum with the boundary of the $d$-simplex.  The boundary of the removed facet is a missing facet in $\Delta_1 \#_\phi~\Delta_2.$     A {\bf missing facet} in $\Delta$ is a subset $\sigma$ of vertices such that $|\sigma|=d, \partial \sigma \subseteq \Delta$, but $\sigma \notin \Delta.$  A rational homology sphere can be written as a facet connected sum if and only if it has a missing facet.  For a detailed recent account of how to write $\Delta = \Delta_1 \#_\phi~\Delta_2$ when $\Delta$ has a missing facet, see \cite[Section 3]{BD}.  Facet connected sum behaves quite well with respect to the $g$-vector and weak Lefschetz elements.  A straight-forward calculation shows that for $1 \le i \le d-1, ~g_i(\Delta_1 \# \Delta_2) = g_i(\Delta_1) + g_i(\Delta_2).$  

\begin{theorem}  \cite[Theorem 6.1]{BN}\label{connected sum}
Suppose $\Delta_1$ and $\Delta_2$ are $(d-1)$-dimensional rational homology spheres.  If $\field[\Delta_1]$ and $\field[\Delta_2]$ have Lefschetz (respectively weak Lefschetz) elements, then $\field[\Delta_1 \#~\Delta_2]$ also has  Lefschetz (respectively weak Lefschetz) elements.  
\end{theorem}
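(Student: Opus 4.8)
\emph{Proof proposal.}
Since $\Delta=\Delta_1\#_\phi\Delta_2$, it has a missing facet $\tau$ (the common identified facet), $|\tau|=d$ and $\partial\tau\subseteq\Delta$. Regard the vertex sets so that $V_1\cap V_2=\tau$ and $V=V_1\cup V_2$ is the vertex set of $\Delta$. Let $D_i$ be $\Delta_i$ with the open facet $\tau$ removed; a Mayer--Vietoris argument shows $D_i$ is an $\field$-homology ball with $\partial D_i=\partial\tau$, and $\Delta=D_1\cup D_2$ with $D_1\cap D_2=\partial\tau$. The plan is to exploit two short exact sequences: one decomposing $\field[\Delta]$ in terms of $\field[D_2]$ and the canonical module of $\field[D_1]$, and one relating each $\field[D_i]$ to $\field[\Delta_i]$. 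Genericity then reduces the claim to the (weak) Lefschetz property of $\field[\Delta_1]$ and $\field[\Delta_2]$; recall that both $\field[\Delta_i]$ are Gorenstein with $\dim_\field\field(\Delta_i)_j=h_j(\Delta_i)$.

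Write $m_\tau=\prod_{v\in\tau}x_v$. As in the proof of Theorem~\ref{even PL}, consider
$$0\to I_1\to\field[\Delta]\to\field[D_2]\to 0,$$
where $\field[\Delta]\to\field[D_2]$ is the quotient by $(x_v:v\in V_1\setminus\tau)$ and $I_1=(x_v:v\in V_1\setminus\tau)\field[\Delta]$. Since no face of $\Delta$ meets both $V_1\setminus\tau$ and $V_2\setminus\tau$, the module $I_1$ is spanned by monomials supported on interior faces of $D_1$, and both $m_\tau$ and the variables $x_v$ ($v\in V_2\setminus\tau$) annihilate it; hence $I_1$ is the interior face ideal of $\field[D_1]$, which by Hochster's theorem \cite[II.~Theorem~7.3]{St96} is the canonical module $K_1$ of the homology ball $D_1$. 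Quotient by a generic l.s.o.p.\ $\Theta$ of $R$. Repeating the $h$-vector bookkeeping of Theorem~\ref{even PL} verbatim — using $h_i(\Delta)=h_i(D_1)+h_i(D_2)-g_i(\partial\tau)$ together with Gr\"abe's identity $g_i(\partial D_1)=h_i(D_1)-h_{d-i}(D_1)$ \cite{Grabe} — gives $\dim_\field (I_1/(I_1\cap\Theta))_i=h_{d-i}(D_1)=\dim_\field(K_1/\Theta K_1)_i$, so the natural surjection $K_1/\Theta K_1\twoheadrightarrow I_1/(I_1\cap\Theta)$ is an isomorphism and
$$0\to K_1/\Theta K_1\to\field(\Delta)\to\field(D_2)\to 0$$
is exact. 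For the second sequence, $0\to(m_\tau)\to\field[\Delta_i]\to\field[D_i]\to 0$ with $(m_\tau)\cong R_\tau(-d)$ free of rank one over the polynomial subring $R_\tau\subseteq R$ on the vertices of $\tau$; since $h_j(D_i)=h_j(\Delta_i)$ for $j<d$ and $h_d(D_i)=0$, modding out $\Theta$ yields $(m_\tau)/\Theta(m_\tau)\cong\field$ in degree $d$, hence isomorphisms $\field(\Delta_i)_j\xrightarrow{\sim}\field(D_i)_j$ for $j<d$ compatible with multiplication by $\omega$ whenever the target degree is $<d$. Combining this with the perfect pairing $\field(D_1)_j\times(K_1/\Theta K_1)_{d-j}\to(K_1/\Theta K_1)_d\cong\field$ of the Artinian ring $\field(D_1)$ and its canonical module, multiplication $\cdot\,\omega^k\colon(K_1/\Theta K_1)_j\to(K_1/\Theta K_1)_{j+k}$ is, for $j\ge 1$, dual to $\cdot\,\omega^k\colon\field(D_1)_{d-j-k}\to\field(D_1)_{d-j}$, which matches $\cdot\,\omega^k\colon\field(\Delta_1)_{d-j-k}\to\field(\Delta_1)_{d-j}$ when $d-j<d$.

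Now assemble the conclusion. Since $\field[\Delta]$ is Gorenstein, to prove $\field[\Delta]$ has weak Lefschetz elements it suffices that $\cdot\,\omega\colon\field(\Delta)_c\to\field(\Delta)_{c+1}$ be surjective for $c=\lfloor d/2\rfloor$. A diagram chase on the degree-$c$ and degree-$(c+1)$ slices of $0\to K_1/\Theta K_1\to\field(\Delta)\to\field(D_2)\to 0$ shows this follows once $\cdot\,\omega$ is surjective in degree $c$ on both $K_1/\Theta K_1$ and $\field(D_2)$. Surjectivity on $\field(D_2)$ in degree $c$ is (the transport of) the critical-degree surjectivity defining weak Lefschetz for the sphere $\Delta_2$; surjectivity on $K_1/\Theta K_1$ in degree $c$ is, by the duality above, injectivity of $\cdot\,\omega\colon\field(\Delta_1)_{d-c-1}\to\field(\Delta_1)_{d-c}$, which holds because $d-c-1=\lceil d/2\rceil-1$ lies strictly below the middle, where a Gorenstein ring with the weak Lefschetz property is injective. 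For the strong Lefschetz case, replace $\cdot\,\omega$ by the chain map $\cdot\,\omega^{d-2i}$ between the degree-$i$ and degree-$(d-i)$ slices of the same sequence; for $1\le i\le d/2$ the induced maps on $K_1/\Theta K_1$ and on $\field(D_2)$ are, by the two reductions, the strong Lefschetz isomorphisms of $\field(\Delta_1)$ and $\field(\Delta_2)$ in degrees $<d$, so the five lemma gives an isomorphism on $\field(\Delta)$; the case $i=0$ is just $\omega^d\neq 0$ in $\field(\Delta)$ for generic $\omega$. In either case one good pair $(\Theta,\omega)$ forces generic ones to work.

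The main obstacle is the exactness of the first sequence after reduction modulo $\Theta$: that the surjection $K_1/\Theta K_1\to I_1/(I_1\cap\Theta)$ is an isomorphism, which rests entirely on the $h$-vector identities for the pair $(D_1,\partial\tau)$ and Gr\"abe's theorem, exactly as in Theorem~\ref{even PL}. A secondary point requiring care is arranging a single generic $\Theta$ that simultaneously induces linear systems of parameters for $\field[D_1]$, $\field[D_2]$, $\field[\Delta_1]$, $\field[\Delta_2]$ and a common generic $\omega$; this is routine but the bookkeeping of which rings the forms descend to must be done explicitly. Once these are in place the rest is formal homological algebra.
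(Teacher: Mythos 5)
You should first note that the paper itself does not prove Theorem~\ref{connected sum}; it quotes it from \cite[Theorem 6.1]{BN}. The closest in-paper benchmark is the proof of Theorem~\ref{even PL}, and your argument is essentially that argument transported to the connected-sum decomposition: the short exact sequence whose kernel is the interior-face ideal of the homology ball $D_1$, Hochster's identification of that ideal with the canonical module \cite[II.~Theorem~7.3]{St96}, the dimension count via Gr\"abe \cite{Grabe} (indeed here one can get $\dim (I_1/(I_1\cap\Theta))_i=h_{d-i}(D_1)$ even more quickly from $h_i(\Delta_1\#\Delta_2)=h_i(\Delta_1)+h_i(\Delta_2)$ for $0<i<d$), and then duality of $\field(D_1)$ against its canonical module to convert the hypotheses on $\field(\Delta_1)$ into statements about $K_1/\Theta K_1$. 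For the weak Lefschetz assertion your proof is complete: the reduction to surjectivity in degree $c=\lfloor d/2\rfloor$, the transport of surjectivity to $\field(D_2)$, the dualized injectivity on the canonical-module side, and the snake-lemma assembly are all correct. (One small imprecision: for $d$ odd the degree $d-c-1=(d-1)/2$ is the middle degree, not strictly below it; injectivity still holds there because $h_{(d-1)/2}=h_{(d+1)/2}$ forces the weak Lefschetz map to be bijective.)

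For the strong Lefschetz assertion there is one genuine gap: the case $i=0$, i.e.\ $\omega^{d}\neq 0$ in $\field(\Delta)$, which you assert without argument. Your five-lemma diagram degenerates exactly there: in degrees $0$ and $d$ the outer terms of $0\to K_1/\Theta K_1\to\field(\Delta)\to\field(D_2)\to 0$ have dimensions $(0,1)$ and $(1,0)$ respectively, so the flanking vertical maps are not isomorphisms and the middle map $\cdot\,\omega^{d}\colon\field(\Delta)_0\to\field(\Delta)_d$ is not controlled by your sequence. In characteristic zero (or characteristic $>d$) the claim is automatic: if $\ell^{d}=0$ for every linear form $\ell$, polarization gives $\field(\Delta)_d=(\field(\Delta)_1)^{d}=0$, a contradiction, so your proof is complete there. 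But the paper's blanket hypothesis is an arbitrary infinite field, and in small characteristic a standard graded Gorenstein algebra can satisfy $\ell^{d}=0$ for all linear $\ell$ even though its top degree is nonzero (already $\ell^{2}=0$ for every linear form in $\field[x,y]/(x^{2},y^{2})$ in characteristic two). Hence in general you must transfer $\omega^{d}\neq 0$ from $\field(\Delta_1)$ or $\field(\Delta_2)$, where it holds by hypothesis, to $\field(\Delta)$; this does not follow from your degree-$<d$ isomorphisms $\field(\Delta_i)_j\cong\field(D_i)_j$, because multiplication landing in degree $d$ is precisely what those isomorphisms do not see ($\field(D_i)_d=0$ while $\field(\Delta_i)_d\cong\field$). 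Either restrict the strong Lefschetz half to characteristic zero or supply a separate argument for the top power; everything else, including the genericity bookkeeping you flag (restrictions of generic forms to the vertex sets of $\Delta_1$, $\Delta_2$ are generic), is routine and correctly handled.
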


When $g_2$ equals zero $\Delta$ is a stacked sphere \cite{Ka87}.  A {\bf stacked sphere} is any complex that can be obtained by starting from the boundary of a simplex and performing a sequence of  bistellar-$0$ moves.  Repeated applications of the above theorem shows that $\field[\Delta]$ has Lefschetz elements if $\Delta$ is a stacked sphere.  

Another tool we require is edge contraction.   Let $u$ and $v$ be vertices of $\Delta$ which share an edge $e$.  We say $e$ satisfies the {\bf link condition} if $\lk e = \lk u \cap \lk v.$  When $e$ satisfies the link condition then it is possible to contract $e$ identifying $u$ and $v$ to obtain a new complex $\Delta_e$ which is PL-homeomorphic to $\Delta$.  Furthermore, if $\field[\Delta_e]$ and $\field[\lk e]$  have  Lefschetz elements, then so does $\field[\Delta].$  In characteristic zero this is due to Babson and Nevo \cite{BN}.  Murai extended their result to arbitrary characteristics in \cite{Mur}.

The next ingredient we will use in our study of $\Delta$ with small $g_2$ is Nevo and Novinsky's classification of rational homology spheres with $g_2=1.$  

\begin{theorem} \label{g2=1} \cite{NN}
Let $\Delta$ be a  rational homology sphere with $g_2=1$ and $d \ge 4.$ If $\Delta$ is not a facet connected sum with a stacked sphere, then $\Delta$ is either the join of the boundary of two simplices each of which has dimension at least two, or $\Delta$ is the join of the boundary of the $(d-2)$-simplex and the boundary of a polygon.  
\end{theorem}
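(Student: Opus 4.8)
The plan is to reformulate $g_2=1$ in terms of rigidity, reduce to the case that $\Delta$ has no missing facet, prove that such a $\Delta$ must be a join of two lower-dimensional simplicial spheres, and then read off the two families from a short computation with $h$-polynomials.

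Since $d\ge 4$, $\Delta$ is a normal pseudomanifold of dimension at least three, so by the rigidity inequality (Theorem~\ref{rigidity}) one has $\dim_\field\field(\Delta)_2=h_2(\Delta)$ and $\cdot\omega\colon\field(\Delta)_1\to\field(\Delta)_2$ injective for generic $\Theta,\omega$; hence $g_2(\Delta)=1$ is equivalent to $\dim_\field(\field(\Delta)/(\omega))_2=1$, equivalently to the $1$-skeleton of $\Delta$ being generically $d$-rigid and carrying a one-dimensional space of self-stresses in a generic realization in $\R^d$. First I would reduce to the \emph{prime} case (no missing facet): if $\Delta$ has a missing facet then $\Delta=\Delta_1\#_\phi\Delta_2$ with $g_2(\Delta)=g_2(\Delta_1)+g_2(\Delta_2)$, and since each summand has nonnegative $g_2$ by Theorem~\ref{rigidity} one of them vanishes; but a rational homology sphere with $g_2=0$ is a stacked sphere \cite{Ka87}, so $\Delta$ would be a facet connected sum with a stacked sphere, against hypothesis. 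With $\Delta$ prime, two structural facts come cheaply. No vertex link can be the minimal $(d-2)$-sphere $\partial\Delta^{d-1}$: otherwise, $\Delta$ being prime, the $d$-element vertex set of such a link must already be a facet of $\Delta$, so $\Delta$ contains a copy of $\partial\Delta^{d}$ and therefore equals it, forcing $g_2=0$. Consequently every vertex has degree at least $d+1$, and comparing $2f_1\ge(d+1)f_0$ with $f_1=df_0-\binom{d+1}{2}+1$ gives $f_0\ge d+2$.

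The crux is to show that a prime $\Delta$ with $g_2=1$ is a nontrivial join $\Delta=\Delta_1\star\Delta_2$ of two simplicial spheres --- equivalently, that the vertex set of $\Delta$ admits a nontrivial partition $V=V_1\sqcup V_2$ with every minimal non-face contained in $V_1$ or in $V_2$. When $f_0=d+2$ this is immediate from the classification of simplicial $(d-1)$-spheres with $d+2$ vertices, all of which are of the form $\partial\Delta^{a}\star\partial\Delta^{b}$ with $a+b=d$ (and primeness forces $a,b\ge 2$). When $f_0\ge d+3$ one must produce the partition directly, and this is the step I expect to be the main obstacle; here the target is precisely $\partial\Delta^{d-2}\star P$ with $P$ the boundary of an $n$-gon, $n=f_0-d+1\ge 4$, so one wants to locate a minimal non-face $N$ with $|N|=d-1$ (so that $\Delta|_N=\partial N=\partial\Delta^{d-2}$) such that every other minimal non-face is an edge disjoint from $N$ and these edges form the complement of a cycle, giving $\Delta=\Delta|_N\star\Delta|_{V\setminus N}$. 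The tools would be the one-dimensionality of the stress space --- a single self-stress can witness only one ``independent'' defect, leaving no room for two incompatible families of minimal non-faces --- together with the additivity of $g_2$ and primeness applied to the induced subcomplexes on $N$ and on $V\setminus N$. Squeezing the combinatorics of vertex links and missing faces tightly enough to extract $N$ from the lone stress is where the difficulty lies.

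Finally, granted $\Delta=\Delta_1\star\Delta_2$ with $\dim\Delta_i\ge 1$, the two families drop out of the multiplicativity of the $h$-polynomial under join. Writing $h^{(i)}_j:=h_j(\Delta_i)$,
$$g_2(\Delta)=(h^{(1)}_2-h^{(1)}_1)+(h^{(2)}_2-h^{(2)}_1)+h^{(1)}_1h^{(2)}_1,$$
where $h^{(i)}_1\ge 1$ with equality iff $\Delta_i$ is a simplex boundary, and $h^{(i)}_2-h^{(i)}_1$ equals $g_2(\Delta_i)\ge 0$ if $\dim\Delta_i\ge 3$, equals $0$ if $\dim\Delta_i=2$, and equals $3-n_i$ if $\Delta_i$ is the boundary of an $n_i$-gon. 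After discarding a possible $0$-dimensional (suspension) factor --- which, whenever it still yields a prime $\Delta$, merely rewrites one of the complexes below --- the equation $g_2(\Delta)=1$ is seen to have exactly the solutions: both $\Delta_i$ are boundaries of simplices of dimension at least two; or one factor is $\partial\Delta^{d-2}$ and the other is the boundary of a polygon. Since a prime stacked sphere is a simplex boundary, these are precisely the two cases in the statement.
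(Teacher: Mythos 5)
Your proposal correctly assembles the easy periphery of this result, but it leaves the actual theorem unproved, and you say so yourself. The reduction to the prime case via additivity of $g_2$ under facet connected sum, nonnegativity of $g_2$ from Theorem~\ref{rigidity}, and Kalai's characterization of $g_2=0$ \cite{Ka87} is fine; the exclusion of vertex links equal to $\partial\Delta^{d-1}$ is fine; the $f_0=d+2$ case is fine (every simplicial $(d-1)$-sphere with $d+2$ vertices is $\partial\Delta^a\star\partial\Delta^b$ --- though note it is the condition $g_2=1$, not primeness, that rules out $a=1$: the suspension $S^0\star\partial\Delta^{d-2}$ is prime but has $g_2=0$); and the closing computation with the multiplicativity of $h$-polynomials under join does correctly extract the two families, modulo the somewhat hand-waved suspension-factor case, which does work out. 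But all of this is exactly the frame of the argument. The content of the theorem is the step you flag as ``the main obstacle'': showing that a \emph{prime} rational homology sphere with $g_2=1$ and $f_0\ge d+3$ splits as a join at all --- concretely, locating a missing $(d-1)$-set $N$ and showing every other minimal non-face is an edge outside $N$, with those edges forming the complement of a cycle. The appeal to ``a single self-stress can witness only one independent defect'' is not an argument: $g_2$ is the dimension of the stress space of a generic framework, and passing from one global stress to this very rigid combinatorial structure is precisely what requires the detailed analysis of vertex links, induced subcomplexes, and lower-bound/rigidity arguments for links that occupies the bulk of Nevo and Novinsky's paper \cite{NN}. Without that step your proposal proves nothing beyond the case $f_0\le d+2$.

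For calibration: this survey does not prove the statement either; it quotes it from \cite{NN}. So the benchmark is the Nevo--Novinsky argument itself, and your sketch essentially reconstructs its setup (prime reduction, rigidity reformulation, join computation) while omitting its core. If you want to complete the proof along your lines, the place to invest is an inductive analysis of the links: for a prime $\Delta$ with $g_2=1$, the links of vertices have $g_2\le 1$ (cone/restriction of the stress), and one must combine the induction hypothesis on the links with primeness and the failure/satisfaction of edge link conditions to force the global join structure --- this is genuinely delicate and is where \cite{NN} does its work.
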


Now let $\Delta$ be a four-dimensional homology sphere with $g_2=1.$  We wish to show that $\field[\Delta]$ has  Lefschetz elements.   By Theorem \ref{connected sum} and Theorem \ref{g2=1} we need only prove that $\field[\Delta]$ has  Lefschetz elements when $\Delta$ is the join of the boundary of a polygon and the boundary of the three-simplex.  Contracting the edges of the polygon until a triangle is reached reduces the problem to $\field[\partial \Delta^2 \star \partial \Delta^3]$ where the existence of weak Lefschetz elements is an easy exercise.  

The last new trick before we begin our journey involves the short simplicial $g$-vector.  This is the sum  of the $g$-vectors of the links of all of the vertices.    Specifically, define
$$\tilde{g}_i(\Delta) = \sum_{v \in [f_0]} g_i(\lk v).$$

\begin{proposition} \cite{Sw2}
  If $\Delta$ is a pure $(d-1)$-dimensional complex, then
  \begin{equation} \label{g-tilde}
  \tilde{g}_2 = 3 g_3 + (d-1) g_2.
  \end{equation}
\end{proposition}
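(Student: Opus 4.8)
The plan is to establish a clean identity for the \emph{short simplicial $h$-vector} $\tilde h_k(\Delta) := \sum_v h_k(\lk v)$, from which (\ref{g-tilde}) follows by a one-line specialization. Specifically, I claim that for a pure $(d-1)$-dimensional complex $\Delta$ and $0 \le k \le d-1$,
$$\tilde h_k(\Delta) = (k+1)\,h_{k+1}(\Delta) + (d-k)\,h_k(\Delta).$$
Granting this, since $g_i = h_i - h_{i-1}$, we get
$$\tilde g_2 = \tilde h_2 - \tilde h_1 = \bigl(3h_3 + (d-2)h_2\bigr) - \bigl(2h_2 + (d-1)h_1\bigr) = 3(h_3 - h_2) + (d-1)(h_2 - h_1) = 3g_3 + (d-1)g_2,$$
as desired.

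The first step toward the claim is the double-counting identity $\sum_v f_{j-1}(\lk v) = (j+1)\,f_j(\Delta)$, valid for all $j \ge 0$: a $j$-face $\tau$ of $\Delta$ has $j+1$ vertices, and for each vertex $v \in \tau$ the face $\tau \setminus \{v\}$ is a $(j-1)$-face of $\lk v$; conversely every $(j-1)$-face of every vertex link arises this way from a unique $j$-face of $\Delta$. (For $j = 0$ this reads $\sum_v f_{-1}(\lk v) = \sum_v 1 = f_0$, consistent with $f_{-1} = 1$; note each $\lk v$ is a nonempty pure $(d-2)$-complex because $\Delta$ is pure.) Multiplying by $x^{d-1-j}$ and summing, this is equivalent to the face-polynomial identity
$$\sum_v f_{\lk v}(x) = d\,f_\Delta(x) - x\,f_\Delta'(x),$$
obtained by comparing coefficients of $x^{d-1-j}$ on both sides; the leading terms cancel, reflecting that each $\lk v$ has dimension $d-2$.

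The second step passes to $h$-polynomials. Applying $h(x+1) = f(x)$ to each link and summing gives $\sum_v h_{\lk v}(x+1) = \sum_v f_{\lk v}(x) = d\,h_\Delta(x+1) - x\,h_\Delta'(x+1)$; substituting $x \mapsto x-1$ yields
$$\sum_v h_{\lk v}(x) = d\,h_\Delta(x) - (x-1)\,h_\Delta'(x).$$
Writing $h_\Delta(x) = \sum_i h_i\,x^{d-i}$, the right-hand side equals $\sum_i i\,h_i\,x^{d-i} + \sum_i (d-i)\,h_i\,x^{d-i-1}$, so the coefficient of $x^{d-1-k}$ is exactly $(k+1)h_{k+1} + (d-k)h_k$. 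Matching this with the coefficient of $x^{d-1-k}$ on the left, namely $\tilde h_k$, proves the claim.

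There is no serious obstacle here; once the double-counting identity is recorded the rest is bookkeeping. The only points requiring a little care are the degree shift in passing between $\Delta$ (a $(d-1)$-complex) and its links (which are $(d-2)$-complexes) in the face-polynomial identity, and the chain rule plus the substitution $x \mapsto x-1$ relating $f$- and $h$-polynomials. One could instead bypass generating functions by inserting the double-counting identity directly into (\ref{f by h}) applied to each link and then using (\ref{h by f}) for $\Delta$, but that forces a Vandermonde-type simplification of a binomial double sum, which is less transparent than the short computation above.
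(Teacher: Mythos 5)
Your proof is correct; note that the paper itself gives no argument for this proposition, simply citing \cite{Sw2}, and your route is essentially the standard one used there: the identity $\tilde h_k = (k+1)h_{k+1} + (d-k)h_k$ for the short simplicial $h$-vector, obtained by double counting pairs $(v,\tau)$ with $v\in\tau$ and then translating to $h$-polynomials, with (\ref{g-tilde}) following by taking $\tilde h_2-\tilde h_1$. The only hypotheses you use (purity, so that every vertex link is $(d-2)$-dimensional and its $h$-vector is indexed accordingly) are exactly those of the statement, so nothing is missing.
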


 While our problem begins in dimension four at $g_2=2,$ we will begin by proving that if $\Delta$ is a four-dimensional rational homology sphere  with $g_2 \le 3,$ then $\field[\Delta]$ has weak Lefschetz elements.    In order to facilitate the proofs we introduce the notion of a minimal non-Lefschetz $(d-1)$-sphere.  We call $\Delta$ a {\bf non-Lefschetz} sphere if $\Delta$ is a rational homology sphere whose face ring does not contain weak Lefschetz elements.  A $(d-1)$-dimensional non-Lefschetz sphere $\Delta$ is a {\bf minimal non-Lefschetz sphere} if among all $(d-1)$-dimensional non-Lefschetz spheres $\Delta$ minimizes $g_2$ and in addition, $\Delta$ minimizes the number of vertices of  $(d-1)$-dimensional non-Lefschetz sphere with the same $g_2.$  Of course, if the most optimistic version of the $g$-conjecture holds, then there are no non-Lefschetz spheres!  

\begin{lemma}
If $\Delta$ is a minimal non-Lefschetz sphere, then $\Delta$ does not have a missing facet. 
\end{lemma}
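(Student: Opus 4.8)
The plan is to argue by contradiction: suppose a minimal non-Lefschetz sphere $\Delta$ has a missing facet $\sigma$. Since a rational homology sphere has a missing facet if and only if it can be written as a facet connected sum along that facet, we may write $\Delta = \Delta_1 \#_\phi \Delta_2$ for some $(d-1)$-dimensional rational homology spheres $\Delta_1$ and $\Delta_2$, neither of which is the boundary of the $d$-simplex (if one of them were $\partial\Delta^d$, the operation would be a bistellar $0$-move and $\Delta$ would just be $\Delta_i$ up to relabelling, contradicting the presence of a genuine decomposition — one should be slightly careful here and note that a stacked-sphere summand can always be peeled off, so the interesting case is when both summands have $g_2 \ge 1$, but in any event each $\Delta_i$ has strictly fewer vertices than $\Delta$ and $g_2(\Delta_i) \le g_2(\Delta)$).

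The key computation is the additivity of the $g$-vector under facet connected sum: for $1 \le i \le d-1$, $g_i(\Delta_1 \# \Delta_2) = g_i(\Delta_1) + g_i(\Delta_2)$. In particular $g_2(\Delta) = g_2(\Delta_1) + g_2(\Delta_2)$, so both $g_2(\Delta_i) \le g_2(\Delta)$, with equality for one of them only if the other has $g_2 = 0$, i.e. is a stacked sphere. Now I split into two cases. If $g_2(\Delta_1) < g_2(\Delta)$ and $g_2(\Delta_2) < g_2(\Delta)$, then by minimality of $g_2$ neither $\Delta_1$ nor $\Delta_2$ is a non-Lefschetz sphere, so both $\field[\Delta_1]$ and $\field[\Delta_2]$ have weak Lefschetz elements; by Theorem~\ref{connected sum} so does $\field[\Delta_1 \# \Delta_2] = \field[\Delta]$, contradicting that $\Delta$ is non-Lefschetz. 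If instead, say, $g_2(\Delta_1) = g_2(\Delta)$ and $\Delta_2$ is a stacked sphere, then $\Delta_2$ has weak (indeed full) Lefschetz elements by the remark following Theorem~\ref{connected sum} (repeated facet connected sum with $\partial\Delta^d$), while $\Delta_1$ has the same $g_2$ as $\Delta$ but strictly fewer vertices; by the second clause in the definition of minimal non-Lefschetz sphere, $\Delta_1$ is not non-Lefschetz, hence $\field[\Delta_1]$ has weak Lefschetz elements. Again Theorem~\ref{connected sum} gives weak Lefschetz elements for $\field[\Delta]$, a contradiction.

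The main obstacle is purely bookkeeping rather than conceptual: one must ensure that the decomposition $\Delta = \Delta_1 \#_\phi \Delta_2$ coming from a missing facet genuinely yields two complexes each of which is strictly smaller (in vertex count) so that the second minimality clause can be invoked, and one must handle the degenerate possibility that one summand is $\partial\Delta^d$ (in which case $\Delta$ is obtained from the other summand by a bistellar $0$-move, and one simply iterates the argument on that summand, which has one fewer vertex). For the precise combinatorics of writing a rational homology sphere with a missing facet as a facet connected sum I would cite \cite[Section 3]{BD}. Once that setup is in place, the contradiction follows immediately from additivity of $g_2$ under $\#$ and from Theorem~\ref{connected sum}, so there is no hard analytic or algebraic step.
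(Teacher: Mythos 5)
Your argument is correct and is essentially the paper's own proof: write $\Delta=\Delta'\#\,\Delta''$ using the missing facet, note that each summand has fewer vertices and $g_2$ at most $g_2(\Delta)$, invoke minimality to get weak Lefschetz elements for both summands, and conclude via Theorem~\ref{connected sum}. Your extra case analysis (additivity of $g_2$, the stacked or $\partial\Delta^d$ summand) only spells out details the paper leaves implicit, so there is nothing to change.
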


\begin{proof}
Let $\Delta$ be a minimal non-Lefschetz sphere.  If $\Delta$ has a missing facet, then $\Delta = \Delta' \#~\Delta''$ with $g_2(\Delta') \le g_2(\Delta), g_2(\Delta'') \le g_2(\Delta)$ and both $\Delta'$ and $\Delta''$ have fewer vertices than $\Delta.$  By Theorem \ref{connected sum}, $\field[\Delta]$ has weak Lefschetz elements.
\end{proof}

\begin{lemma}
If $\Delta$ is a minimal non-Lefschetz sphere, then the link of every edge of $\Delta$ has at least $d$ vertices.  Equivalently, for every edge $e$ the link $\lk e \neq \partial \Delta^{d-2}.$
\end{lemma}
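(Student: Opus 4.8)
The plan is to argue by contradiction using the minimality of $\Delta$ together with the edge contraction machinery recalled just above. Suppose $\Delta$ is a minimal non-Lefschetz sphere and that some edge $e = \{u,v\}$ has $\lk e = \partial \Delta^{d-2}$, i.e.\ $\lk e$ is the boundary of a $(d-2)$-simplex on a vertex set $W$ with $|W| = d-1$. The first step is to observe that such an edge automatically satisfies the link condition $\lk e = \lk u \cap \lk v$: since $\partial\Delta^{d-2} \subseteq \lk u \cap \lk v$ is already a $(d-3)$-sphere, and $\lk u \cap \lk v$ is a subcomplex of the $(d-2)$-sphere $\lk v$ (for a normal pseudomanifold, indeed a homology sphere, any face strictly containing $\lk e$ would force $\lk e$ to not be the whole link), no larger faces can appear; one checks directly that $\lk u \cap \lk v$ cannot properly contain the sphere $\partial \Delta^{d-2}$ inside the homology $(d-2)$-sphere $\lk v$. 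So $e$ is contractible, and $\Delta_e$ is a PL-homeomorphic (hence rational homology) $(d-1)$-sphere with one fewer vertex than $\Delta$.

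The second step is to control the $g$-vector and the links under contraction. Because $\lk e = \partial \Delta^{d-2}$, which is a stacked sphere with $g_2(\lk e) = 0$, and because contracting $e$ removes exactly one vertex while the combinatorics change only locally around $W$, one gets $g_2(\Delta_e) \le g_2(\Delta)$ — in fact the short simplicial $g$-vector identity (\ref{g-tilde}) together with the fact that the link being deleted contributes nothing to $g_2$ gives $g_2(\Delta_e) = g_2(\Delta)$, or at worst $g_2(\Delta_e) \le g_2(\Delta)$, which is all we need. Now invoke minimality twice: since $\Delta$ minimizes $g_2$ among non-Lefschetz spheres of its dimension, and among those minimizes the number of vertices, and since $\Delta_e$ has strictly fewer vertices with $g_2(\Delta_e) \le g_2(\Delta)$, the sphere $\Delta_e$ cannot be non-Lefschetz; hence $\field[\Delta_e]$ has Lefschetz elements (or at least weak Lefschetz elements — enough for the argument). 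Likewise $\lk e = \partial \Delta^{d-2}$ certainly has Lefschetz elements, being the boundary of a simplex.

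For the third step, apply the edge-contraction result of Babson--Nevo and Murai: if $\field[\Delta_e]$ and $\field[\lk e]$ both have (weak) Lefschetz elements and $e$ satisfies the link condition, then so does $\field[\Delta]$. This contradicts the assumption that $\Delta$ is non-Lefschetz. Therefore no edge of $\Delta$ can have $\lk e = \partial\Delta^{d-2}$, and since the link of an edge in a $(d-1)$-dimensional homology sphere is a $(d-3)$-homology sphere with at least $d-1$ vertices — with equality exactly when it is $\partial\Delta^{d-2}$ — every edge link has at least $d$ vertices, as claimed.

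I expect the one genuine subtlety to be verifying the link condition for $e$: one must rule out that $\lk u \cap \lk v$ is strictly larger than $\lk e$. This is where the hypothesis $\lk e = \partial\Delta^{d-2}$ is used essentially, not just as a cardinality statement — since $\partial\Delta^{d-2}$ already triangulates a $(d-3)$-sphere, any extra face in $\lk u \cap \lk v \subseteq \lk v$ would make $\lk u \cap \lk v$ a subcomplex of the homology $(d-2)$-sphere $\lk v$ properly containing a $(d-3)$-sphere whose facets already meet every facet of $\lk v$ containing $u$... and a short local check (or the observation that $\lk e$ is the \emph{antistar-type} intersection and $W$ together with $u,v$ spans a missing-facet-free configuration) closes it. Everything else is bookkeeping with the minimality definition and an appeal to the contraction theorem.
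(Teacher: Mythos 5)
The step you flag as the ``one genuine subtlety'' is in fact where the argument breaks: the implication ``$\lk e = \partial \Delta^{d-2}$ forces the link condition'' is false, and no amount of local checking will repair it, because the obstruction to the link condition need not live anywhere near the vertex set of $\lk e$ --- it is any common face of $\lk u$ and $\lk v$ that does not extend over $e$. A concrete counterexample: let $d=4$ and let $\Delta$ be the join of two triangle boundaries, $\Delta = \partial\sigma_1 \star \partial\sigma_2$ with $\sigma_1=\{u,v,a\}$, $\sigma_2=\{b_1,b_2,b_3\}$ (a simplicial $3$-sphere, indeed polytopal). The edge $e=\{u,v\}$ has $\lk e = \partial\sigma_2 = \partial\Delta^{d-2}$, yet $a \in \lk u \cap \lk v$ while $\{u,v,a\}\notin\Delta$, so the link condition fails. (This complex is Lefschetz, so it does not contradict the lemma, but it kills the claimed implication on which your whole contraction strategy rests; note also that by the lemma proved next in the paper, a minimal non-Lefschetz sphere of dimension four has \emph{no} edge satisfying the link condition, so an argument that begins by producing a contractible edge cannot work as stated.) A secondary issue: the contraction theorem of Babson--Nevo and Murai, as quoted, needs Lefschetz elements for $\field[\Delta_e]$, whereas minimality only hands you weak Lefschetz elements; this can be finessed, but it is a further point your write-up does not address.

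The paper's proof avoids contraction entirely. Writing $\lk e = \partial\sigma$ with $|\sigma|=d-1$, it first notes that $\{u\}\cup\sigma$ and $\{v\}\cup\sigma$ cannot both be faces (else $\Delta$ contains $\partial(\{u,v\}\cup\sigma)$, the boundary of a $d$-simplex), say $\{u\}\cup\sigma\notin\Delta$, and then splits into two cases. If $\sigma\in\Delta$, then $\{u\}\cup\sigma$ is a missing facet, contradicting the preceding lemma that a minimal non-Lefschetz sphere has no missing facet. If $\sigma\notin\Delta$, the induced subcomplex on $e\cup\sigma$ is exactly $e\star\partial\sigma$ (this is precisely the configuration occurring in the counterexample above), so a $(d-2)$-bistellar move with $A=e$, $B=\sigma$ is available; it produces a sphere with strictly smaller $g_2$, hence with weak Lefschetz elements by minimality, and since $|A|=2$ differs from $|B|=d-1$ (and from $m$ when $d=2m\ge 6$), Theorems \ref{even PL} and \ref{odd PL} transfer weak Lefschetz elements back to $\Delta$, a contradiction. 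So the intended mechanism is missing facets plus the bistellar-move invariance of Section \ref{PL-spheres}, not edge contraction.
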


\begin{proof}
Suppose $e=\{u,v\}$ is an edge of $\Delta$ with  $\lk e = \partial \Delta^{d-2}= \partial \sigma,$ where $\sigma$ is a $d-1$ subset of the vertices.  We first observe that one of $\{u\} \star \sigma$ or $\{v\} \star \sigma$  is not a face of $\Delta.$  Otherwise $\Delta$ contains as a subcomplex the boundary of the $d$-simplex $\{u,v\} \cup \sigma.$  So assume $\{u\} \cup \sigma \notin \Delta.$  There are now two cases to consider.  One, $\sigma \in \Delta.$  In this case $\Delta$ contains the missing facet $\{u\} \cup \sigma.$  Two, $\sigma \notin \Delta.$  This implies that the induced subcomplex on the union of the vertices of $e$ and $\sigma$ is $e \star \partial \sigma.$  A $(d-2)$-bistellar move gives us a complex with smaller $g_2$ and, via the results of the previous section, weak Lefschetz elements for $\Delta.$  
\end{proof}

\begin{corollary} \label{no stacked spheres}
Suppose $\Delta$ is a minimal non-Lefschetz sphere and $v$ is a vertex of $\Delta.$  Then $\lk v$ is not a facet connected sum with a stacked sphere. In particular, $g_2(\lk v)>0.$
\end{corollary}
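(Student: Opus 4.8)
The plan is to contradict the preceding lemma, which asserts that in a minimal non-Lefschetz sphere $\Delta$ the link of every edge has at least $d$ vertices, i.e.\ is never $\partial\Delta^{d-2}$ (the boundary of the $(d-2)$-simplex, a $(d-3)$-sphere on $d-1$ vertices). The crucial observation is that for any vertex $w$ of $\lk v$ one has $\lk_\Delta\{v,w\} = \lk_{\lk v} w$, so it is enough to exhibit, inside $\lk v$, a vertex $w$ with $\lk_{\lk v} w = \partial\Delta^{d-2}$. Since $\Delta$ is a rational homology sphere, $\lk v$ is a rational homology $(d-2)$-sphere, and $d-2\ge 3$.

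The first step is a structural remark about stacked spheres. Let $S$ be a stacked $(d-2)$-sphere with more than $d$ vertices; it bounds a stacked $(d-1)$-ball $B$ whose dual tree $T$ (nodes the facets of $B$, edges the interior ridges) has at least two nodes, hence at least two leaves. If $F$ is a leaf facet, with unique interior ridge $\tau$ and with $w$ the vertex of $F$ not in $\tau$, then, because in a stacked ball the intersection of any two facets is contained in each ridge separating them in $T$, the vertex $w$ lies in only the facet $F$; consequently the faces of $S$ containing $w$ are exactly the $\{w\}\cup\rho$ with $\rho\subsetneq F\setminus\{w\}$, so $\lk_S w = \partial\Delta^{d-2}$ is the boundary of the simplex on $F\setminus\{w\}$. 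Call such a $w$ a \emph{tip}. Moreover no facet of $S$ contains two tips: a facet of $S$ through a tip $w$ has the form $\{w\}\cup\rho$ with $\rho$ a $(d-2)$-subset of $F\setminus\{w\}$, so if it also contained a tip $w'\neq w$ then $w'\in F$, forcing $F$ to be the leaf facet of $w'$ and hence $w'=w$.

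Now suppose, for contradiction, that $\lk v = \Gamma\#_\phi S$ with $S$ a stacked sphere, the sum being taken along a facet $\sigma$; we may assume $S$ has more than $d$ vertices, since otherwise $S=\partial\Delta^{d-1}$ and the sum is trivial. By the previous step $S$ has at least two tips and $\sigma$ contains at most one of them, so some tip $w$ of $S$ satisfies $w\notin\sigma$. Then every face of $\lk v$ through $w$ is a face of $S$ through $w$, and conversely, so $\lk_{\lk v} w = \lk_S w = \partial\Delta^{d-2}$, whence $\lk_\Delta\{v,w\}=\partial\Delta^{d-2}$ has only $d-1$ vertices, contradicting the preceding lemma. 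This proves $\lk v$ is not a facet connected sum with a stacked sphere. For the final assertion, if $g_2(\lk v)=0$ then $\lk v$ is a stacked $(d-2)$-sphere by \cite{Ka87}; if it has more than $d$ vertices it is a facet connected sum with a stacked sphere, already excluded, and if $\lk v=\partial\Delta^{d-1}$ then $\lk_\Delta\{v,w\}=\partial\Delta^{d-2}$ for every vertex $w$ of $\lk v$, again contradicting the preceding lemma; hence $g_2(\lk v)>0$.

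I expect the main obstacle to be the structural step: verifying that a facet connected sum with a \emph{nontrivial} stacked sphere always retains a tip vertex with its link intact. This is where the dual-tree bookkeeping is genuinely needed; once it is in place, everything else is routine manipulation of links together with the defining property of facet connected sum.
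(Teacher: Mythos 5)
Your argument is sound in substance and hinges on the same contradiction as the paper's proof: inside the stacked summand you produce a vertex $w$ of $\lk v$ whose link in $\lk v$ is $\partial\Delta^{d-2}$, so the edge $\{v,w\}$ of $\Delta$ violates the preceding lemma. The difference is only in how that vertex is located. The paper gets it in one line from the $0$-move description: a facet connected sum with a stacked sphere is obtained from the other summand by a sequence of bistellar $0$-moves, and the apex of the last subdivision has link $\partial\Delta^{d-2}$. Your ``tips'' are exactly these apexes, recovered instead through the dual tree of the stacked ball, at the cost of the tree-of-simplices bookkeeping (which does hold, by induction on the order in which the facets of the ball are attached, each new facet meeting the earlier ball in a single ridge and carrying one new vertex). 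Two remarks. First, dismissing the case $S=\partial\Delta^{d-1}$ because ``the sum is trivial'' is incorrect: facet connected sum with the boundary of a simplex is a bistellar $0$-move (stellar subdivision of a facet), not the identity. The case is harmless, though: the unique vertex of $S$ outside the identified facet then has link $\partial\Delta^{d-2}$ in $\lk v$, so the same contradiction applies; keep that case rather than discard it. Second, your treatment of $\lk v=\partial\Delta^{d-1}$ in the $g_2(\lk v)=0$ step is a genuine and valid simplification over the paper, which rules this case out with a bistellar $(d-1)$-move and minimality via the results of Section \ref{PL-spheres}; you instead observe that every edge $\{v,w\}$ with $w$ a vertex of $\lk v$ then has link $\partial\Delta^{d-2}$, contradicting the lemma directly.
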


\begin{proof}
Suppose $\lk v = \Delta' \#~\Delta''$ with $\Delta''$ a stacked sphere.  Then $\lk v$ can be obtained from $\Delta'$ by a sequence $0$-bistellar moves.  The last one gives an edge whose link is $\partial \Delta^{d-2}.$  The only $\lk v$ with $g_2(\lk v)=0$ not forbidden by this argument is the boundary of the $(d-1)$-simplex.  But this link permits the use of a bistellar $(d-1)$-move.  
\end{proof}

\begin{lemma} \label{no link condition}
If $\Delta$ is a four-dimensional minimal non-Lefschetz sphere, then no edge of $\Delta$ satisfies the link condition.
\end{lemma}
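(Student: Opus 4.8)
The plan is a contradiction argument: from an edge satisfying the link condition I would produce another four-dimensional rational homology sphere with strictly smaller $g_2$, which by minimality of $\Delta$ must have weak Lefschetz elements, and then the edge-contraction theorem of Babson--Nevo \cite{BN} and Murai \cite{Mur} would push that property back up to $\Delta$, contradicting that $\Delta$ is non-Lefschetz.

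So suppose $e=\{u,v\}$ is an edge of $\Delta$ satisfying the link condition. Then contracting $e$ produces a simplicial complex $\Delta_e$ that is PL-homeomorphic to $\Delta$, hence again a four-dimensional rational homology sphere (recall $d=5$ here), with one fewer vertex. Since $\Delta$ is a homology sphere, $\lk e$ is a $\Q$-homology $2$-sphere, hence a genuine simplicial $2$-sphere and therefore the boundary of a simplicial $3$-polytope; in particular $\field[\lk e]$ has Lefschetz elements. The link condition also forces $f_0(\Delta_e)=f_0(\Delta)-1$ and $f_1(\Delta_e)=f_1(\Delta)-1-f_0(\lk e)$, because $e$ itself disappears and, for each vertex $w$ of $\lk e=\lk u\cap\lk v$, the edges $\{u,w\}$ and $\{v,w\}$ are identified. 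Plugging these into the identity $g_2=f_1-d f_0+\binom{d+1}{2}$ with $d=5$ gives
\[
g_2(\Delta_e)=g_2(\Delta)+4-f_0(\lk e).
\]
By the preceding lemma, $\lk e\neq\partial\Delta^{d-2}$, so $f_0(\lk e)\ge d=5$ and hence $g_2(\Delta_e)\le g_2(\Delta)-1<g_2(\Delta)$. Since $\Delta$ minimizes $g_2$ among four-dimensional non-Lefschetz spheres, $\Delta_e$ cannot be non-Lefschetz, so $\field[\Delta_e]$ has weak Lefschetz elements.

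It then remains to feed $\Delta_e$ and $\lk e$ into the edge-contraction machinery of Babson--Nevo and Murai. Its proof relates $\field[\Delta]$, $\field[\Delta_e]$ and $\field[\lk e]$ by a short exact sequence and propagates the Lefschetz property through a snake-lemma diagram chase whose only ingredients are the multiplication maps $\cdot\omega$ between consecutive degrees. The same chase propagates the weak Lefschetz property, so $\field[\Delta]$ has weak Lefschetz elements --- the desired contradiction.

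The expected obstacle is precisely this last step. The edge-contraction theorem is customarily stated as carrying \emph{full} Lefschetz elements from $\Delta_e$ and $\lk e$ to $\Delta$, whereas minimality only hands us \emph{weak} Lefschetz elements for $\Delta_e$, and for a four-dimensional sphere weak Lefschetz does not formally imply full Lefschetz (one would also need $\cdot\omega^3\colon\field(\Delta_e)_1\to\field(\Delta_e)_4$ to be an isomorphism, which is not forced). So the real work is verifying that the Babson--Nevo/Murai homological argument is robust under this weakening to weak Lefschetz elements. The remaining points --- that a $\Q$-homology $2$-sphere is polytopal (hence Lefschetz over the field in play), and the bookkeeping behind the displayed $g_2$ identity --- are routine.
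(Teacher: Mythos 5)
Your argument is essentially the paper's proof: contract the edge, use minimality to conclude $\field[\Delta_e]$ has weak Lefschetz elements (your $g_2$ bookkeeping even gives a strict drop, though $g_2(\Delta_e)\le g_2(\Delta)$ plus the fewer-vertices clause of minimality already suffices), note that $\lk e$ is a polytopal $2$-sphere, and transfer the property back via the Babson--Nevo/Murai edge-contraction result. The obstacle you flag is handled in the paper exactly by invoking that transfer in its weak-Lefschetz form: the full Lefschetz hypothesis is needed only for the \emph{link} (supplied here by polytopality of the two-dimensional $\lk e$), while only weak Lefschetz elements are assumed for $\Delta_e$ and concluded for $\Delta$, so no upgrade of $\Delta_e$'s weak Lefschetz property to a full one is required.
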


\begin{proof}
If an edge $e$ does satisfy the link condition then we arrive at a contradiction by contracting $e.$  The homology sphere $\Delta'$ obtained by contracting $e$ has fewer vertices and $g_2(\Delta')$ is no bigger than $g_2(\Delta).$   Minimality says that $\field[\Delta']$ has weak Lefschetz elements and the the link of $e$ is two-dimensional and hence a polytope, so $\field[\Delta]$  has weak Lefschetz elements.  \end{proof}

We are finally prepared to begin the promised trek toward seeing that homology spheres with few edges satisfy the $g$-conjecture.  

\begin{theorem} \label{dim four}
If $\Delta$ is a four-dimensional minimal non-Lefschetz sphere, then $g_2(\Delta) \ge 4.$  Equivalently, if $\Delta$ is a four-dimensional rational homology sphere and  $g_2(\Delta) \le 3, $ then $\field[\Delta]$ has weak Lefschetz elements.
\end{theorem}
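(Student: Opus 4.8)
Throughout, $\Delta$ is a four-dimensional minimal non-Lefschetz sphere, so $d = 5$, and I want a contradiction from $g_2(\Delta)\le 3$. (The discussion preceding the statement already shows a four-dimensional rational homology sphere with $g_2=1$ has Lefschetz elements, so a minimal non-Lefschetz sphere has $g_2\ge 2$; this will also drop out of the count below.)

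The first tool is a vertex count via the short simplicial $g$-vector \eqref{g-tilde}. Since $|\Delta|$ is a homology four-sphere, Klee's equations (Theorem \ref{Klee}) with $\chi(\Delta)=\chi(S^4)$ give $h_2=h_3$, hence $g_3=0$; substituting into \eqref{g-tilde} yields $\sum_{v}g_2(\lk v)=(d-1)g_2(\Delta)=4g_2(\Delta)$. Each vertex link is a homology three-sphere, and by Corollary \ref{no stacked spheres} every summand satisfies $g_2(\lk v)\ge 1$, so $f_0\le 4g_2(\Delta)$. If $g_2(\Delta)\le 2$ this forces $f_0\le 8=d+3$, whence $\Delta$ is polytopal by Mani's theorem and $\field[\Delta]$ has Lefschetz elements, a contradiction. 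So $g_2(\Delta)=3$, and moreover $9\le f_0\le 12$, the lower bound again because a sphere on at most $d+3$ vertices would be polytopal.

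Now $\sum_v g_2(\lk v)=12$ with $f_0\ge 9$, so at most three vertices have $g_2(\lk v)\ge 2$, and hence at least $f_0-3\ge 6$ vertices $v$ have $g_2(\lk v)=1$. For each such $v$, Theorem \ref{g2=1} together with Corollary \ref{no stacked spheres} (so $\lk v$ is not a facet connected sum with a stacked sphere) forces $\lk v=\partial\Delta^2\star\partial P_{m_v}$, the join of a triangle boundary $\partial\{a_v,b_v,c_v\}$ with the boundary of a polygon — both alternatives of Theorem \ref{g2=1} collapse to this in dimension three. The plan from here is to exploit this rigid local picture: the triple $\{a_v,b_v,c_v\}$ spans all three of its edges and all three triangles $\{v,a_v,b_v\},\{v,b_v,c_v\},\{v,a_v,c_v\}$, while $\{v,a_v,b_v,c_v\}\notin\Delta$ and $\lk_\Delta\{v,a_v,b_v\}=\lk_\Delta\{v,b_v,c_v\}=\lk_\Delta\{v,a_v,c_v\}=\partial P_{m_v}$. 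Using these coincidences for the at least six join-type vertices, together with the structural lemmas already proved for a minimal non-Lefschetz sphere — no missing facet, no edge with link $\partial\Delta^{d-2}$, no edge satisfying the link condition — I would argue that some vertex or edge of $\Delta$ must land in one of these forbidden configurations; failing that, one reduces $\Delta$ to a short explicit list of complexes on at most twelve vertices, each of which is handled either by the $g_2\le 2$ case or by a single bistellar move with $|A|\ne|B|$ (Theorems \ref{even PL}, \ref{odd PL}) to a sphere already known to have weak Lefschetz elements.

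The hard part is precisely this $g_2=3$ analysis. The bistellar moves that the join structure of $\lk v$ naturally makes available near $v$ are the "middle" moves with $|A|=|B|$ (a $2$-move in a four-sphere), which are exactly the ones Theorems \ref{even PL} and \ref{odd PL} do not cover; worse, such a move changes neither $g_2$ nor $f_0$, so the minimality of $\Delta$ offers no leverage against it. Thus one cannot simply simplify $\Delta$ by bistellar moves, and the contradiction has to be extracted directly from the way the many join-type vertex links fit together inside a sphere that has no missing facet, no stacking edge, and no contractible edge — this incidence bookkeeping is where the real work of the proof lies.
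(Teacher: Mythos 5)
Your setup coincides with the paper's strategy: Dehn--Sommerville gives $g_3=0$, so (\ref{g-tilde}) yields $\tilde{g}_2=4g_2$, Corollary \ref{no stacked spheres} gives $g_2(\lk v)\ge 1$ for every vertex, Theorem \ref{g2=1} forces the links with $g_2(\lk v)=1$ to be joins of a $3$-cycle with a polygon, and the contradiction is supposed to come from Lemma \ref{no link condition} and the other minimality lemmas. But the core of the theorem --- ruling out $g_2=3$ --- is not proved in your write-up. What you give is a plan (``I would argue that some vertex or edge of $\Delta$ must land in one of these forbidden configurations; failing that, one reduces $\Delta$ to a short explicit list \dots''), and you yourself concede that ``this incidence bookkeeping is where the real work of the proof lies.'' That bookkeeping is essentially the entire content of the argument: the two-neighborly $8$-vertex case is settled by pinning down the four missing triangles (no vertex in three of them, no edge in two, a count of tetrahedra and $4$-simplices via Alexander duality forcing exactly four vertex-sharing pairs, then a short case check up to isomorphism), the $9$-vertex case by showing the chords forced by the failure of the link condition around a $T\star P$ vertex create more edges than $g_2=3$ allows, and $10$--$12$ vertices by the same mechanism. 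None of this is carried out, so the statement is not established. Your worry about the $|A|=|B|=3$ bistellar moves is well placed, but it also means your proposed fallback (``handled \dots by a single bistellar move with $|A|\ne|B|$'') is itself only a hope, not an argument.

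The one step where you genuinely diverge from the paper --- disposing of $f_0\le 8$ (hence of $g_2\le 2$ and of the $8$-vertex subcase of $g_2=3$) by Mani's theorem --- has a gap as written: Mani's theorem is about simplicial spheres, whereas $\Delta$ here is only a rational homology sphere. To invoke it you need the additional fact that a rational homology $4$-sphere on at most $d+3=8$ vertices is a combinatorial $S^4$ (its vertex links have too few vertices to be non-sphere rational homology $3$-spheres, and one then needs a few-vertex recognition result such as Brehm--K\"uhnel's), which you neither prove nor cite. Note that the paper avoids this route entirely and instead handles the two-neighborly $8$-vertex, $g_2=3$ sphere --- exactly the case your shortcut would wave away --- by the missing-triangle analysis; if you keep the shortcut, you must supply the sphere-recognition step, and you still owe the full $9$--$12$ vertex analysis for $g_2=3$.
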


\begin{proof}
Throughout the proof we assume that $\Delta$ is a four-dimensional minimal non-Lefschetz sphere.  We will show that $g_2(\Delta)=0, 1, 2$ or $3$ is impossible.  The previous discussion eliminates $g_2$ equal to zero or one, so we can assume that $g_2$ is two or three.  We start with $g_2=2.$    From (\ref{g-tilde}), $\tilde{g}_2=8.$ If $\Delta$ has nine or more vertices, then $g_2(\lk v)=0$ for some vertex $v$ which violates Corollary \ref{no stacked spheres}. The minimum number of vertices required for $g_2=2$ is eight, so this is the only remaining case.   By Corollary \ref{no stacked spheres} we know that $g_2(\lk v)=1$ for every vertex of $\Delta.$   There is one edge missing from the complex, so there are six vertices whose link has seven edges.  Let $v$ be one of them.  By Lemma \ref{g2=1} the link of $v$ is the join of a three-cycle, say $x,y,z$ and a four-cycle, say $a,b,c,d$ in cyclic order.   The edge $e=\{v,a\}$ must violate the link condition.  Since the link of $e$ already contains all of the faces of the link of $v$ restricted to the vertices of the link of $e,$ the only way $e$  can violate the link condition is if the link of $a$ contains the vertex $c$ which is not in the link of $e,$ but is in the link of $v.$  Similarly, the edge $\{b,d\}$ must be in $\Delta.$  But we can now see that $\Delta$ is two-neighborly, which is impossible since $g_2=2.$  

From here on we assume that $g_2(\Delta)=3.$  So $\tilde{g}_2=12.$   If $\Delta$ has thirteen or more vertices, then the link of some vertex is a stacked sphere, which is impossible.  It remains to eliminate $8,9,10,11$ and $12$ as a possible number of vertices.  We begin with eight.  This means that the $h$-vector is $(1,3,6,6,3,1)$, so the $f$-vector is $(1,8, 28, 52, 50, 20).$  While $\Delta$ has all possible edges, it has four missing triangles.   What are they?  If a vertex appears in three of them, then its link will have $g_2=0$ which is forbidden.  If an edge appears in two of the missing triangles, then its link will have only $4$ vertices, also impossible.  Let us consider the tetrahedra in $\Delta.$  Suppose $\sigma$ is a subset of the vertices of cardinality four.  In order for $\sigma$ to be a face of $\Delta$ none of the missing triangles can be in $\sigma.$  Each missing triangle removes five four-sets from potential consideration and no four-set is eliminated by two distinct missing triangles since no two missing triangles share an edge.  Thus there are $70-20=50$ candidates.  By Alexander duality a $\sigma$ which does not contain a missing triangle is in $\Delta$ if and only if the vertex induced subcomplex on its complement has the homology of a point.  Since no four-subset of vertices is missing two triangles all $50$ candidate subsets are indeed tetrahedra of $\Delta.$   As there are in fact $50$ tetrahedra in $\Delta$ we try counting $4$-simplices.  

Each missing triangle eliminates $10$ potential five-subsets of vertices.  However, a five-subset is double counted whenever a pair of missing triangles share a vertex.  Let $m$ be the number of pairs of missing triangles which share a vertex.  Then there $56-40+m$ candidate five-subsets for the $4$-simplices of $\Delta.$ Alexander duality shows that all of these candidate five-subsets are $4$-simplices of $\Delta.$   There are twenty $4$-simplices, so $m$ is four. 

 The vertices of $\Delta$ are $\{1, \dots, 8\}.$  W.L.O.G. assume that $\{1,2,3\}$ and $\{6,7,8\}$ are two missing triangles with no common vertex.  At least two of these six vertices are in two missing triangles, so W.L.O.G assume $1$ is such a vertex.  Up to simplicial isomorphism there are two possibilities for a third triangle: $\{1,4,5\}$ or $\{1,4,6\}.$  If $\{1,4,5\}$ is the third triangle, then, up to simplicial isomorphism, the only possibility for a fourth triangle is $\{2,4,6\}.$  However, this is not possible since the link of $\{1,2,4\}$ is just $\{7,8\}.$
  
 This leaves $\{1,2,3\}, \{1,4,6\}$ and $\{6,7,8\}$ as three of the missing triangles.  Up to simplicial isomorphism the last missing triangle must be $\{2,5,7\}.$  However, with these four missing triangles the link of $\{1,2,5\}$ consists of the two edges $\{4,8\}$ and $\{6,8\}.$  Thus we conclude that if $g_2(\Delta)=3,$ then $\Delta$ does not have eight vertices.  
 
 Lest the reader despair, the proofs that the number of vertices can not be $9,10,11$ or $12$ are simpler as we can now count missing edges.  Let us examine the possibility that $\Delta$ has nine vertices.  
There must be at least six vertices whose links have $g_2 = 1.$  What can those links be?  According to Theorem  \ref{g2=1} they must all be of the form $T \star P$ where $T$ is a circuit of length $3$ and $P$ is a circuit of length $3,4$ or $5.$  Let $v$ be a vertex such that $g_2(\lk v) =1$ and $P$  is a circuit of length $5.$  As was the case when discussing $g_2=2,$ in order to guarantee  that every edge of $\Delta$ violates the link condition every vertex  in the $5$-circuit $P$ must be part of a chord of $P$ not in the link of $v.$    Thus there are at least three additional chords of $P$ in $\Delta.$   All of the vertices of $\Delta$ are in the closed star of $v$, $\Star v, $ and we have now seen that $\Delta$ must have all possible edges except for possibly two. Since $g_2(\Delta)$ is three, $\Delta$ is missing three edges!  So $P$ is not a circuit of cardinality five.  

Could $P$ have cardinality four?  The same argument as above shows that $P$ requires two chords.  Now the eight vertices in the closed star of $v$ have all possible edges between them.  So all of the missing edges are incident to the remaining vertex and its link is missing three vertices.   This implies that the link of this last vertex has only five vertices and  must be the boundary of the $4$-simplex.  As we have already seen that this is not allowed, we consider whether or not the link of $v$ and the other vertices whose links have $g_2=1$ could be the join of two triangles.  In this case each of these vertices would be missing two of the other eight vertices from their link.  Since there are at least six vertices with $g_2=1,$ the complex would be missing at least six edges.  This contradiction finishes the proof that $f_0(\Delta) \neq 9.$

The remaining cases of $10,11$ and $12$ vertices follow the same type of reasoning and  the details are left to the extremely interested reader.  
 Use the failure of the link condition in a vertex whose link has $g_2=1$ to add enough edges so that there is some other  link which is too small.  
\end{proof}

\begin{remark}
If $\Delta$ is a four-dimensional $\field$-homology sphere with $g_2 \le 3,$ then $\Delta$ is a PL-manifold.  In fact, any four-dimensional $\field$-homology manifold $\Delta$ with $g_2 < 17$ is actually a PL-manifold. To see this, let $v$ be a vertex of $\Delta$ whose link is not $S^3$.  This link must be a three-dimensional rational homology sphere.  All three-dimensional rational homology spheres are three-manifolds and any triangulation of $S^3$ is a PL three-sphere.  The main results of \cite{Wal} imply that $g_2(\field[\lk v]) \ge 17.$  Now consider the surjective map
$$ \field(\Delta)/ (\omega) \to \field(\Star v)/(\omega),$$
where $\omega$ and the l.s.o.p. are chosen to satisfy Theorem \ref{rigidity}.  Conversely, if the characteristic of $\field$ is not two, then there is a  non-PL $\field$-homology manifold $\Delta$ with $g_2(\Delta)=17.$  An appropriately chosen  one-vertex suspension of Walkup's minimal triangulation of $\R P^3$ has $g_2=17.$ See, for instance, \cite{JL} for a description of one-vertex suspensions.  If the characteristic of $\field$ is two, then it is not clear what the minimum $g_2$ is for the first non-PL $\field$-homology manifold.  A lower bound is  $g_2 = 21$ as there are no three-dimensional $\field$-homology spheres with $g_2 \le 20$ \cite{LSS}.  An upper bound is $g_2=28$ as there is a triangulation of the the $\Z_3$ lens space with $g_2=28.$  This triangulation is two-neighborly, so any one-vertex suspension will have $g_2=28.$  \end{remark}

\begin{theorem} \label{dim five}
If $\Delta$  is a five-dimensional non-Lefschetz sphere, then $g_2(\Delta) \ge 5.$  Equivalently, if $\Delta$ is a five-dimensional rational homology sphere and $g_2(\Delta) \le 4,$ then $\field[\Delta]$ has weak Lefschetz elements.
\end{theorem}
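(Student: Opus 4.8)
The plan is to run the same argument as in the proof of Theorem~\ref{dim four}, but now with Theorem~\ref{dim four} itself supplying the key input about the vertex links.  So let $\Delta$ be a five-dimensional minimal non-Lefschetz sphere, whence $d=6$, and suppose toward a contradiction that $g_2(\Delta)\in\{0,1,2,3,4\}$.  The extreme cases go exactly as before: if $g_2=0$ then $\Delta$ is stacked and repeated use of Theorem~\ref{connected sum} gives it Lefschetz, hence weak Lefschetz, elements; and if $g_2=1$ then, a minimal non-Lefschetz sphere having no missing facet, $\Delta$ is not a facet connected sum, so Theorem~\ref{g2=1} forces $\Delta=\partial\Delta^3\star\partial\Delta^3$ or $\Delta=\partial\Delta^4\star\partial P$ for a polygon $P$; both are boundaries of simplicial polytopes (direct sums of simplices), so they have Lefschetz elements.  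Either way $\field[\Delta]$ has weak Lefschetz elements, a contradiction.

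For $g_2\in\{2,3,4\}$ I would count ``bad'' vertices, where a vertex $v$ is bad if $\field[\lk v]$ has no weak Lefschetz elements.  Since $\lk v$ is a four-dimensional rational homology sphere, Theorem~\ref{dim four} gives $g_2(\lk v)\ge 4$ for every bad $v$, while Corollary~\ref{no stacked spheres} gives $g_2(\lk v)\ge 1$ for every $v$.  Hence, if $n$ is the number of bad vertices,
$$\tilde{g}_2(\Delta)=\sum_{v}g_2(\lk v)\geq 4n+(f_0-n)=3n+f_0.$$
On the other hand (\ref{g-tilde}) with $d=6$, together with Corollary~\ref{upper bound}, gives $\tilde{g}_2(\Delta)=3g_3+5g_2\le 3g_2^{<2>}+5g_2$, which equals $16$, $27$, $35$ for $g_2=2,3,4$.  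A five-dimensional sphere with $g_2=g$ has at least $9$ vertices when $g\in\{2,3\}$ and at least $10$ vertices when $g=4$, an elementary consequence of $f_1\le\binom{f_0}{2}$; combining this with $3n+f_0\le 3g_2^{<2>}+5g_2$ forces $n\le 2$ when $g_2=2$, $n\le 6$ when $g_2=3$, and $n\le 8$ when $g_2=4$.

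Now split on the size of $n$.  If $n\le d+1=7$ then Lemma~\ref{cokernel} applies with $|V|=n$: the number of degree-$3$ monomials in $|V|-d-1\le 0$ variables is $0$, so for generic $\Theta$ and $\omega$ the map $\cdot\omega:\field(\Delta)_3\to\field(\Delta)_4$ is surjective, and since $d$ is even this suffices (by the Gorenstein property of $\field[\Delta]$) for $\omega$ to be a weak Lefschetz element --- a contradiction.  This finishes $g_2=2$ and $g_2=3$ and leaves only $g_2=4$ with $n=8$.  In that case $3n+f_0\le 35$ gives $f_0\le 11$, hence $f_0\in\{10,11\}$, and then $\tilde{g}_2(\Delta)\ge 3\cdot 8+f_0\ge 34$; comparing this with $\tilde{g}_2(\Delta)=3g_3+20$ and with $g_3\le g_2^{<2>}=5$ forces $g_3=5=g_2^{<2>}$.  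By the sharp case of Corollary~\ref{upper bound}, $\cdot\omega:\field(\Delta)_2\to\field(\Delta)_3$ is then injective for generic $\omega$.  But $\field(\Delta)$ is a Gorenstein Artinian algebra with socle in degree $d=6$, so under the perfect pairings $\field(\Delta)_i\times\field(\Delta)_{d-i}\to\field(\Delta)_d$ the map $\cdot\omega:\field(\Delta)_3\to\field(\Delta)_4$ is the transpose of $\cdot\omega:\field(\Delta)_2\to\field(\Delta)_3$; injectivity of the latter gives surjectivity of the former, and once again $\omega$ is a weak Lefschetz element.  This contradiction completes the proof that $g_2(\Delta)\ge 5$.

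The step I expect to be most delicate is the final squeeze: the argument for $g_2=4$ depends on the two lower bounds for link $g_2$'s --- the $4$ per bad link from Theorem~\ref{dim four} and the extra $1$ per good link from Corollary~\ref{no stacked spheres} --- being exactly strong enough to push $\tilde{g}_2$ up to $3g_2^{<2>}+5g_2$ and so pin $g_3$ at $g_2^{<2>}$, where the Gorenstein duality takes over.  Were Theorem~\ref{dim four} to yield only $g_2(\lk v)\ge 3$ for bad links, this would collapse and one would instead have to eliminate the eleven-vertex configurations by hand, as in the long part of the proof of Theorem~\ref{dim four}.  The remaining care is routine: checking the elementary vertex counts, and verifying that Lemma~\ref{cokernel}, Corollary~\ref{upper bound}, and the Gorenstein duality can be arranged to hold for a common generic choice of $\Theta$ and $\omega$.
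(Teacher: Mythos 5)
Your proposal is correct and is essentially the paper's own argument: both count the non-Lefschetz ("bad") vertex links via the cokernel lemma, feed Theorem \ref{dim four} and Corollary \ref{no stacked spheres} into the identity $\tilde{g}_2 = 3g_3 + 5g_2$ together with $g_3 \le g_2^{<2>}$, and dispatch the borderline case $g_3 = g_2^{<2>}$ by the injectivity in Corollary \ref{upper bound} plus Gorenstein duality. The only difference is packaging: the paper runs a single uniform inequality (at least eight bad links, each forcing nine link vertices, gives $\tilde{g}_2 \ge 34$ versus $\tilde{g}_2 \le 3\cdot 4 + 5\cdot 4 = 32$), whereas you split into cases by $g_2$ and the number of bad vertices, reaching the same contradiction.
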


\begin{proof}
Let $\Delta$ be a five-dimensional non-Lefschetz sphere.    As outlined in the proof of Theorem \ref{few vertices}, $\Delta$ must have at least eight  vertices whose links are non-Lefschetz spheres (nine if $\Delta$ is homeomorphic to a sphere).   By the previous theorem the link of each of these vertices has $g_2 \ge 4$ and by Corollary \ref{no stacked spheres} $g_2$ of the links of the other vertices is at least one.  In order for the link of a vertex to have $g_2 \ge 4$ the link must have at least nine vertices.  So $\Delta$ has at least ten vertices. Thus $\tilde{g}_2$ is at least $34.$  If $g_2 \le 4$ then $g_3 \le 5.$ However, if $g_3=5,$ then Corollary \ref{upper bound} implies that multiplication by generic $\omega$ induces an injection from degree two to three, and hence a surjection from degree three to degree four.  Therefore    $g_3 \le 4.$   This leads to a contradiction as (\ref{g-tilde}) gives $\tilde{g}_2 \le 3 \cdot 4 + 5 \cdot 4 = 32.$
\end{proof}

\begin{theorem}
If $\Delta$ is a $6$-dimensional rational homology sphere with $g_2 \le 5,$ then $\Delta$ has very weak Lefschetz elements.  Furthermore, $\Delta$ is an integer homology sphere.  
\end{theorem}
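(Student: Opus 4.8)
The plan is to establish the two assertions separately, the existence of very weak Lefschetz elements following the template of Theorems~\ref{dim four} and \ref{dim five}, and the conclusion that $\Delta$ is an integral homology sphere by descending to the low-dimensional links of $\Delta$ and then climbing back up. Here $d=7$. I would argue the first assertion by contradiction: let $\Delta$ be a $6$-dimensional rational homology sphere with $g_2\le 5$ that fails to have very weak Lefschetz elements and that minimizes $g_2$ and then the number of vertices. Exactly as in Section~\ref{small g2}, the minimality lemmas carry over — $\Delta$ has no missing facet, every edge link has at least $7$ vertices, and (by Corollary~\ref{no stacked spheres}) $g_2(\lk v)\ge 1$ for every vertex $v$ — using Theorem~\ref{connected sum} and, for the bistellar moves, Theorems~\ref{even PL} and \ref{odd PL}. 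Since very weak Lefschetz for $\Delta$ amounts to surjectivity of $\cdot\omega\colon\field(\Delta)_4\to\field(\Delta)_5$, the cokernel lemma (Lemma~\ref{cokernel}) in degree $4$ shows that if this fails then at least $d+2=9$ vertices $v$ have $\field[\lk v]$ without weak Lefschetz elements; by Theorem~\ref{dim five} each such $5$-dimensional link has $g_2\ge 5$, hence at least ten vertices, so $\Delta$ has at least eleven vertices and, counting the remaining links as well, $\tilde{g}_2(\Delta)\ge 9\cdot 5+2=47$. The short simplicial $g$-vector identity~(\ref{g-tilde}) gives $\tilde{g}_2=3g_3+6g_2$, and combining this with $g_2\le 5$, the bound $g_3\le g_2^{<2>}$ of Corollary~\ref{upper bound}, and the observation that $g_3=g_2^{<2>}$ would make $\cdot\omega\colon\field(\Delta)_2\to\field(\Delta)_3$ injective — equivalently $\cdot\omega\colon\field(\Delta)_4\to\field(\Delta)_5$ surjective, a contradiction — forces $g_2(\Delta)=5$, $g_3(\Delta)=6$, and $\tilde{g}_2(\Delta)=48$. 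A short count then pins the situation down to: exactly nine vertices with $g_2(\lk v)=5$ (or eight with $g_2(\lk v)=5$ and one with $g_2(\lk v)=6$), two or three further vertices with $g_2(\lk v)\in\{1,2\}$, and eleven or twelve vertices in total.

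The main obstacle, as in the proof of Theorem~\ref{dim four}, is to eliminate this short list by explicit combinatorial bookkeeping. For the vertices with $g_2(\lk v)=1$ one uses the classification of Theorem~\ref{g2=1} (such a $5$-dimensional link, not being a facet connected sum with a stacked sphere, is $\partial\Delta^2\star\partial\Delta^4$, $\partial\Delta^3\star\partial\Delta^3$, or $\partial\Delta^4\star\partial P_k$), with a finer version of the same argument for $g_2(\lk v)=2$; for the vertices with $g_2(\lk v)\ge 5$ one uses that their links, having almost the minimum number of vertices, are missing very few faces, so that the failure of the link condition forced by minimality at the edges through such a vertex must introduce enough faces to make some other link have at most $6$ vertices (violating the edge-link lemma) or to make $\Delta$ two-neighborly, which is incompatible with $g_2=5$. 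I expect this case analysis to be the bulk of the work.

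For the integral homology statement the key tool is the monotonicity $g_2(\Delta)\ge g_2(\lk v)$ for a vertex $v$, coming from the surjection $\field(\Delta)/(\omega)\to\field(\Star v)/(\omega)$ of Theorem~\ref{rigidity} (as in the Remark after Theorem~\ref{dim four}); iterating gives $g_2(\Delta)\ge g_2(\lk\sigma)$ for every face $\sigma$ with $\dim\lk\sigma\ge 3$. Applying this to $2$-faces: a $3$-dimensional rational homology sphere other than $S^3$ has $g_2\ge 17$ by Walkup's results~\cite{Wal} (as recalled in the Remark above), so every $2$-face link of $\Delta$ is $S^3$, and hence every edge link is a PL $4$-manifold. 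An edge link $L$ is then an orientable $4$-dimensional rational homology sphere with $g_2(L)\le 5<15$, so Corollary~\ref{beta1} (over each $\field=\Z/p$) forces $\beta_1(L;\Z/p)=0$ for all $p$; Poincar\'e duality and universal coefficients then give $H_\ast(L;\Z)=H_\ast(S^4;\Z)$. Consequently every vertex link $\lk v$ is a $5$-dimensional $\Z$-homology manifold; Corollary~\ref{beta1} again gives $\beta_1(\lk v;\Z/p)=0$, whence $H_1(\lk v;\Z)=0$ and $H_\ast(\lk v;\Z)=(\Z,0,T,0,0,\Z)$ for some finite group $T$.

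It remains to kill $T$, the one point not handled directly by Corollary~\ref{beta1}. For this I would invoke Kalai's manifold $g$-conjecture for $\lk v$: by Theorem~\ref{Kalai_conj} it holds provided all but at most $d'+1=7$ vertex links of $\lk v$ have weak Lefschetz elements, and these are the $4$-dimensional integral homology spheres just produced, with $g_2\le 5$ — so one needs, and I expect the same minimal-counterexample argument in dimension $4$ to give (or the seven exceptions to absorb), that such spheres have weak Lefschetz elements. Kalai's conjecture then asserts that $\big(1,g_1(\lk v),g_2(\lk v),g_3(\lk v)-\binom{6}{3}\dim_{\Z/p}(T\otimes\Z/p)\big)$ is an M-vector; since $g_3(\lk v)\le g_2(\lk v)^{<2>}\le 7<\binom{6}{3}$, nonnegativity of the last entry is impossible unless $T=0$. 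Thus every vertex link is an integral homology $5$-sphere, so $\Delta$ is a $\Z$-homology $6$-manifold; running the same argument one dimension higher — $\beta_1(\Delta;\Z/p)=0$ by Corollary~\ref{beta1}, and the $2$-dimensional torsion of $\Delta$ killed by Kalai's conjecture for $\Delta$ via Theorem~\ref{Kalai_conj} — yields $H_\ast(\Delta;\Z)=H_\ast(S^6;\Z)$, as required.
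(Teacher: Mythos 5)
There are two genuine gaps. First, in the Lefschetz half your counting only disposes of the case of at least ten bad vertices: with exactly $d+2=9$ vertices whose links are non-Lefschetz you correctly arrive at $g_2=5$, $g_3=6$, $\tilde{g}_2=48$ and a short list of eleven- or twelve-vertex configurations, but you then defer their elimination to an ``explicit combinatorial bookkeeping'' that you admit would be the bulk of the work and that you do not carry out. That is exactly the case the paper kills by a different idea, which your outline lacks: when there are only $d+2$ bad vertices, the proof of the cokernel lemma forces the induced subcomplex on them to contain \emph{all} $4$-dimensional faces on those nine vertices; since $\Delta$ is an integer homology sphere, the Cannon--Edwards double suspension theorem makes $\Sigma\Sigma\Delta$ a topological $S^8$ containing the $4$-skeleton of the $10$-simplex (the nine bad vertices plus two joined suspension points), contradicting van Kampen--Flores. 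Note also the logical order this imposes: integrality must be proved \emph{before} the Lefschetz statement, since the double suspension theorem needs it, whereas your plan runs the two halves in the opposite order. (The dimension-four analysis of Theorem \ref{dim four}, already lengthy for $g_2\le 3$, is no evidence that a hands-on analysis of your residual configurations with $g_2=5$ is feasible; and your appeal to the minimality lemmas would additionally require checking that Theorems \ref{even PL} and \ref{odd PL} transfer surjectivity in degree $4\to 5$ rather than the middle isomorphism they are stated for.)

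Second, your route to ``$\Delta$ is an integer homology sphere'' is not closed. The paper obtains it in one stroke from quantitative bounds valid over every field: $g_2\ge\binom{d+1}{2}\beta_1$ (\cite[Theorem 5.2]{NovSw}) and the bounds of \cite[Theorem 4.18]{Sw} forcing $\beta_2=\beta_3=0$ whenever $g_2\le 5$. Your reconstruction kills the torsion in the vertex links (and then in $\Delta$) by invoking Theorem \ref{Kalai_conj}, whose hypothesis requires all but at most $d'+1$ vertex links to have weak Lefschetz elements; for $\lk v$ those links are four-dimensional homology spheres with $g_2\le 5$, and for $\Delta$ they are five-dimensional ones with $g_2\le 5$, while Theorems \ref{dim four} and \ref{dim five} only reach $g_2\le 3$ and $g_2\le 4$ respectively. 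You acknowledge this (``one needs, and I expect\dots''), but nothing available here bounds the number of offending vertices by $7$ or $8$: the estimate $\tilde{g}_2\le 48$ allows, say, nine vertex links with $g_2=5$. So the torsion-killing step rests on statements at least as hard as the theorem itself and is not established; the $\Z/p$-Betti-number vanishing has to come from the cited quantitative results, not from Lefschetz properties of the links.
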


\begin{proof}
We start by showing that $\Delta$ is an integral homology sphere.  Since $g_2 \le 5$, \cite[Theorem 5.2]{NovSw} shows that $H_1(\Delta; \field)=0$ for any $\field.$   In addition  $g_2 \le 5$   forces $H_2(\Delta; \field)=0=H_3(\Delta; \field)$ for any $\field$ \cite[Theorem 4.18]{Sw}.

We continue by way of contradiction and assume that  for generic $\omega$ multiplication $\cdot \omega: \field(\Delta)_4 \to \field(\Delta)_5$ is not surjective.  Let $B$ be the set of vertices of $\Delta$ whose links are non-Lefschetz spheres.  Since $d = 7$ the cardinality of $B$ is at least nine.   Can $|B|=9?$  Let $\Delta_B$ be the vertex induced subcomplex of $\Delta$ on B.  If $|B|=9,$ then $\Delta_B$ contains all possible  four-simplices for those nine vertices.  By the Canon-Edwards double suspension theorem \cite{Can}, the double suspension $\Sigma \Sigma \Delta$ is homeomorphic to the eight-sphere.  In addition,  $\Sigma \Sigma \Delta$ contains an embedding of the $4$-skeleton of the ten-dimensional simplex whose vertices consist of $B$ and a pair of connected  suspension points.  This contradicts  van Kampen-Flores.  

  The previous theorem tells us that any $\Delta$ with $|B| \ge 10$ has  $\tilde{g}_2 \ge 50.$  Since $g_2 \le 5$ we know that $g_3 \le 7$ and if $g_3 =7, $ then $\cdot \omega: \field(\Delta)_2 \to \field(\Delta)_3$ is injective,  and hence $\cdot \omega: \field(\Delta)_4 \to \field(\Delta)_5$ is surjective.  However, (\ref{g-tilde}) and the fact that $g_3 \le 6$ show that $\tilde{g}_2 \le 3 \cdot 6 + 6 \cdot 5 = 48.$  

\end{proof}

\begin{theorem}
If $\Delta$ is a $(d-1)$-dimensional rational homology sphere  with $d \ge 8$ and $g_2 \le 5,$ then for generic $\omega$ and $\Theta$ multiplication $\cdot \omega : \field(\Delta)_{d-3} \to \field(\Delta)_{d-2}$ is surjective. 
\end{theorem}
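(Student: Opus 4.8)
The plan is to argue by contradiction and by induction on $d$, in the spirit of the proofs of Theorem~\ref{dim five} and of the previous (six-dimensional) theorem: the cokernel lemma will produce many vertices whose links fail the analogous surjection, and this will be incompatible with the short simplicial $g$-vector identity~(\ref{g-tilde}). So suppose, for generic $\Theta$ and $\omega$, that $\cdot \omega: \field(\Delta)_{d-3} \to \field(\Delta)_{d-2}$ is not surjective.

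A preliminary reduction shows that $g_3 \le 6$. If $g_2 \le 4$ then Corollary~\ref{upper bound} gives $g_3 \le g_2^{<2>} \le 5$, since the largest value of $a^{<2>}$ for $a \le 4$ is $4^{<2>} = 5$. If $g_2 = 5$ then $g_3 \le g_2^{<2>} = 7$, and in the extremal case $g_3 = 7$ Corollary~\ref{upper bound} forces $\cdot \omega: \field(\Delta)_2 \to \field(\Delta)_3$ to be injective; by the Gorenstein property of $\field[\Delta]$ this is equivalent to surjectivity of $\cdot \omega: \field(\Delta)_{d-3} \to \field(\Delta)_{d-2}$, contrary to assumption. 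Hence $g_3 \le 6$.

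Next, let $B$ be the set of vertices $v$ for which no generic choice of l.s.o.p.\ $\Theta_v$ and one-form $\omega_v$ makes $\cdot \omega_v: \field(\lk v)_{d-4} \to \field(\lk v)_{d-3}$ surjective. Since $\cdot \omega: \field(\Delta)_{d-3} \to \field(\Delta)_{d-2}$ has nonzero cokernel, Lemma~\ref{cokernel} forces $|B| \ge d+2$. Now each link $\lk v$ with $v \in B$ is a $(d-2)$-dimensional rational homology sphere failing the surjection $\cdot \omega_v: \field(\lk v)_{d-4} \to \field(\lk v)_{d-3}$. When $d \ge 9$ this is exactly the map appearing in the statement of the present theorem with $d$ replaced by $d-1 \ge 8$, so the inductive hypothesis (in contrapositive form) yields $g_2(\lk v) \ge 6$. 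When $d = 8$, $\lk v$ is six-dimensional and the map in question runs from degree $4$ to degree $5$, which is the very weak Lefschetz map of a six-dimensional sphere; the previous theorem (again in contrapositive form) yields $g_2(\lk v) \ge 6$. So $g_2(\lk v) \ge 6$ for every $v \in B$.

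Substituting into~(\ref{g-tilde}) and using the rigidity inequality $g_2(\lk v) \ge 0$ for the vertices outside $B$, we get
$$\tilde{g}_2(\Delta) = \sum_{v} g_2(\lk v) \ge 6|B| \ge 6(d+2),$$
whereas $\tilde{g}_2 = 3g_3 + (d-1)g_2 \le 18 + 5(d-1)$ since $g_3 \le 6$ and $g_2 \le 5$. Hence $6(d+2) \le 5(d-1) + 18$, that is $d \le 1$, contradicting $d \ge 8$. The step I expect to need the most care is the index bookkeeping in the third paragraph: one must verify that the failed-surjection condition on $\lk v$ is simultaneously the conclusion of this theorem one parameter lower (when $d \ge 9$) and the very weak Lefschetz condition for a six-sphere (when $d = 8$), since this matching is what powers the induction and fixes the base case. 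Nothing else looks delicate — each genericity condition is Zariski-open and nonempty, so finitely many may be imposed at once — and, unlike in the six-dimensional argument, no van Kampen-Flores input is needed: the threshold $g_2 \le 5$ in the previous theorem improves the bad-link estimate from $g_2(\lk v) \ge 5$ to $g_2(\lk v) \ge 6$, and that one extra unit already makes $6(d+2) \le 5(d-1) + 18$ impossible for every $d \ge 2$.
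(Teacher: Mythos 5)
Your argument is correct and follows essentially the same route as the paper's proof: argue by contradiction, use the cokernel lemma to force at least $d+2$ bad vertices, run an induction on $d$ whose base case is the six-dimensional theorem so that each bad link has $g_2(\lk v)\ge 6$, and contradict the identity $\tilde{g}_2 = 3g_3+(d-1)g_2$. The only cosmetic difference is that you sharpen the bound to $g_3\le 6$ via the extremal case of Corollary \ref{upper bound} and Gorenstein duality (exactly as the paper does in dimensions five and six), whereas the paper's proof of this step uses $g_3\le 7$ together with one extra vertex whose link has positive $g_2$; both versions yield the numerical contradiction for $d\ge 8$.
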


\begin{proof}
We proceed by induction on $d$ beginning with $d=7,$ the previous theorem.  Let $B$ be the subset of vertices of $\Delta$ for which  there are no one-forms $\omega$ such that  $\cdot \omega: \field(\lk v)_{d-4} \to \field(\lk v)_{d-5}$ is  surjective.   

For the induction step, we now know that in any minimal counterexample the link of any vertex in $B$  has $g_2 \ge 6$, the cardinality of $B$ is at least $d+2,$ and there is at least one other vertex whose link has positive $g_2.$    This gives a lower bound of $6(d+2) + 1 = 6d+13$ for $\tilde{g}_2.$  Since $g_2 \le 5$ means that $\tilde{g}_2 \le 3 \cdot 7 + (d-1) \cdot 5 = 5d+16$ we are done.
\end{proof}

\begin{theorem}
Let $\Delta$ be a $(d-1)$-dimensional rational homology sphere.  Then $(1,g_1,g_2,g_3)$ is an M-vector whenever $g_2$ is bounded by the following chart.  In particular, $g_3 \ge 0$ whenever $g_2 \ge 6.$  
$$\begin{array}{cc}
d & g_2 \le \\
6 & 7 \\
7 & 8 \\
d \ge 8 & 6 + \frac{22}{d-1}
\end{array}.
$$
\end{theorem}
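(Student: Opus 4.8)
The plan is to reduce, via Corollary \ref{upper bound}, to proving $g_3\ge 0$, and then argue by contradiction through a minimal counterexample, playing the short simplicial $g$-vector identity (\ref{g-tilde}) against the cokernel lemma. Fix $d\ge 6$, suppose the theorem fails for this $d$, and let $\Delta$ be a $(d-1)$-dimensional rational homology sphere with $g_2$ within the chart, with $g_3<0$ (this is exactly what failure of the M-vector condition means, by Corollary \ref{upper bound}), with $g_2(\Delta)$ minimal among all such examples, and then $f_0(\Delta)$ minimal. From (\ref{f by h}) one gets $g_2\le 3$ whenever $f_0\le d+3$, and in that range Theorem \ref{dim five}, the six-dimensional theorem, and the $d\ge 8$ theorem (each combined with Corollary \ref{upper bound}) already force $g_3\ge 0$; hence $f_0(\Delta)\ge d+4$, and in particular $\Delta\ne\partial\Delta^d$. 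Since $h_i=h_{d-i}$ for homology spheres (Klee's equations), $g_3<0$ gives $h_{d-2}>h_{d-3}$, so for generic $\Theta,\omega$ the cokernel of $\cdot\omega:\field(\Delta)_{d-3}\to\field(\Delta)_{d-2}$ has dimension $-g_3+\dim\ker\ge -g_3\ge 1$. Applying Lemma \ref{cokernel} to this map, the set $B$ of vertices $v$ whose link admits no generic surjection $\cdot\omega:\field(\lk v)_{d-4}\to\field(\lk v)_{d-3}$ satisfies $|B|\ge d+2$, with $|B|\ge d+3$ when $g_3\le -2$.

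Two structural inputs then come into play. First, for $v\in B$ the link $\lk v$ is a $(d-2)$-dimensional rational homology sphere, and the failed surjection $\cdot\omega:\field(\lk v)_{d-4}\to\field(\lk v)_{d-3}$ is precisely the weak Lefschetz test for $\lk v$ when $d\in\{6,7\}$ and the very weak Lefschetz test (respectively the surjectivity of the $d\ge 8$ theorem applied to $\lk v$) when $d\ge 8$; so Theorems \ref{dim four}, \ref{dim five}, the six-dimensional theorem, and the $d\ge 8$ theorem give $g_2(\lk v)\ge c_d$ with $c_6=4$, $c_7=5$, and $c_d=6$ for $d\ge 8$. Second, by minimality $\Delta$ has no missing facet: a splitting $\Delta=\Delta_1\#\Delta_2$ would satisfy $g_3(\Delta_1)+g_3(\Delta_2)=g_3(\Delta)<0$, producing a counterexample $\Delta_k$ with $g_2(\Delta_k)\le g_2(\Delta)$ and $f_0(\Delta_k)<f_0(\Delta)$. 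Likewise no edge $e$ has $\lk e=\partial\Delta^{d-2}$: writing $\sigma$ for the $(d-1)$-vertex set of such a link, this would force either $\Delta\supseteq\partial\Delta^d$ and hence (as $\Delta$ is a strongly connected pseudomanifold) $\Delta=\partial\Delta^d$, which is excluded, or a missing facet, or — in the remaining case $\sigma\notin\Delta$, where the induced subcomplex on $e\cup\sigma$ is $e\star\partial\sigma$ — a rational homology sphere $\Delta'$ obtained by a $(d-2)$-bistellar move with $g_2(\Delta')=g_2(\Delta)-1$ and $g_3(\Delta')=g_3(\Delta)<0$, a smaller counterexample; here one uses that a $(d-2)$-move is inverse to a $1$-move and that a direct computation from (\ref{f by h}) shows, for $d\ge 5$, a $1$-move raises $g_2$ by one and leaves $g_3$ fixed. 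It follows that no vertex link of $\Delta$ is a stacked sphere — a stacked $(d-2)$-sphere is either $\partial\Delta^{d-1}$ (forcing the missing facet $\sigma$) or contains a vertex whose link is $\partial\Delta^{d-2}$ (forcing the forbidden edge) — so by \cite{Ka87} every vertex satisfies $g_2(\lk v)\ge 1$.

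Combining everything through (\ref{g-tilde}),
$$(d-1)\,g_2 \;=\; \tilde{g}_2 - 3g_3 \;=\; \sum_{v\in[f_0]} g_2(\lk v) + 3|g_3| \;\ge\; |B|\,c_d + \bigl(f_0 - |B|\bigr) + 3|g_3|.$$
A short case analysis on $|B|$ and $|g_3|$, using $f_0\ge d+4$, $|B|\ge d+2$ (and $|B|\ge d+3$ when $|g_3|\ge 2$), and $|g_3|\ge 1$, makes the right-hand side exceed $(d-1)$ times the chart value in every case: e.g. when $|B|=d+2$ and $g_3=-1$ one has $f_0-|B|\ge 2$, so $(d-1)g_2\ge (d+2)c_d+5$, which for $d\ge 8$ is larger than $6(d-1)+22$, while for $d=6$ it gives $5g_2\ge 37>35$ and for $d=7$ it gives $6g_2\ge 50>48$; the cases $|B|\ge d+3$ or $g_3\le -2$ carry even more slack. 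This contradiction proves $g_3\ge 0$, whence $(1,g_1,g_2,g_3)$ is an M-vector by Corollary \ref{upper bound}. The final clause is the instance $g_2\le 6$, which lies under the chart for every $d$ since $6+\tfrac{22}{d-1}\ge 6$.

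The step I expect to be the main obstacle is the structural reduction in the second paragraph: transplanting the "no missing facet" and "no $\partial\Delta^{d-2}$ edge-link" arguments — which the text runs for minimal non-Lefschetz spheres, and there proves using the PL-section results — to a minimal counterexample for $g_3\ge 0$. The load-bearing point is the effect of a $(d-2)$-bistellar move on the $g$-vector: one must confirm it strictly decreases $g_2$ while fixing $g_3$ (so that the move truly yields a smaller counterexample), and one must dispose of the degenerate possibility that $\Delta$ collapses to a boundary of a simplex. Matching the cokernel-lemma "bad vertex" condition with the Lefschetz hypotheses of the earlier dimension-by-dimension theorems is routine once the Gorenstein symmetry $h_i=h_{d-i}$ of the links is used, but it requires care with indices.
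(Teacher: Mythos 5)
Your proposal is correct and follows essentially the same route as the paper, which simply says to ``proceed exactly as in the previous three theorems except that $g_3$ is now bounded above by negative one'': reduce to $g_3\ge 0$ via Corollary \ref{upper bound}, use the cokernel lemma together with the dimension-by-dimension Lefschetz results to force many vertices whose links have large $g_2$, and contradict $\tilde{g}_2=3g_3+(d-1)g_2$ with $g_3\le -1$. Your explicit transplantation of the minimal-counterexample lemmas (no missing facet, no edge with link $\partial\Delta^{d-2}$, hence no stacked vertex links) to the $g_3$ setting is exactly the step the paper leaves implicit, and your accounting (using $f_0\ge |B|+2$) even covers the boundary cases with a strict inequality.
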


\begin{proof}
As noted in Corollary \ref{upper bound} it suffices to show that $g_3 \ge 0$ under the stated conditions.  In order for $g_3$ to be negative, multiplication $\cdot \omega: \field(\Delta)_{d-3} \to \field_{d-2}$ by generic one-forms must not be surjective.   This will always require at least $d+4$ vertices.  Otherwise the links of the vertices have at most $d+3$ vertices and $g_2 \le 3.$   This would allow the application of the induction hypothesis and the cokernel lemma.  Now proceed exactly as in the previous three theorems except that $g_3$ is now bounded above by negative one.  
\end{proof}

\begin{remark}
There has been no attempt to get the best possible result of this nature.  It would be very surprising if a deeper investigation of the fact that in minimal non-Lefschetz spheres every edge fails the link condition and there are no missing facets did not produce substantially improved estimates. 
\end{remark}

\section{Even more optimistic} \label{k-cm}

   As noted earlier, the most optimistic form of the $g$-conjecture for sphere-like spaces includes rational homology spheres and eventually involves homology manifolds and pseudomanifolds.  If one leaves the world of manifolds or even pseudomanifolds, then there are even more optimistic possibilities.   
   
   \begin{definition}
   A $(d-1)$-dimensional complex $\Delta$ is {\bf Cohen-Macaulay} (CM) over $\field$ if for all faces (including the empty face) $\sigma,~\tilde{H}_i(\lk \sigma; \field) = 0 ~\forall  i < d-1-|\sigma|.$  We say $\Delta$ is $l$-CM if $\Delta$ is CM and removing $l-1$ or fewer vertices leaves a $(d-1)$-dimensional CM complex.  
   \end{definition}
   
   The term Cohen-Macaulay comes from Reisner's famous result that the face ring of $\Delta$ is Cohen-Macaulay if and only if $\Delta$ satisfies the above definition \cite{Rei}. The more specific $l$-CM complexes were introduced by Baclawski \cite{Bac}.  Both Cohen-Macaulay \cite{Mun} and $2$-CM \cite{Walker} are topological properties  of the complex, but $3$-CM and higher $l$-CM depend on the specific triangulation.  For instance, balls are $1$-CM but not $2$-CM, while spheres are $2$-CM.  The complete graph on $4$-vertices is $3$-CM, but if you subdivide any edge, then the resulting complex is only $2$-CM.  Examples of $l$-CM complexes with $l \ge 3$ include finite buildings \cite{Bjo}, independence complexes of matroids with no small cocircuits, and order complexes of geometric lattices with no short lines \cite{Bac}.

   Motivated by results on matroids \cite{HS}, \cite{Sw03}, and the more general spaces with convex ear decompositions \cite{Sw06}, Bj\"orner and Swartz proposed the following problem.
   
   \begin{problem} \cite{Sw06}
   Suppose $\Delta$ is a $2$-CM complex.  Do there exists one-forms $\omega$ such that multiplication $\cdot \omega^{d-2i}: \field(\Delta)_i \to \field(\Delta)_{d-i}$ is an injection for $0 \le i \le d/2?$
   \end{problem}  
   
   In combination with the Dehn-Sommerville equations (\ref{klee}) a positive solution to this problem would immediately imply the strongest form of the $g$-conjecture.    Nevo proved that the rigidity inequality holds for $2$-CM complexes.  
   
   \begin{theorem} \label{2-cm} \cite{Nev3}
   If $\Delta$ is $2$-CM and $d \ge 4,$ then for generic l.s.o.p. $\Theta$ and one-form $\omega$ multiplication $\cdot \omega: \field(\Delta)_1 \to \field(\Delta)_2$ is injective. 
   \end{theorem}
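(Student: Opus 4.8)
The plan is to translate the statement into a rigidity assertion about the $1$-skeleton $G(\Delta)$ and then prove that assertion by induction on the dimension.  Recall the dictionary of Kalai and Lee underlying Theorem~\ref{rigidity}: for a $(d-1)$-dimensional complex $\Gamma$ with $\dim_\field \field(\Gamma)_2 = h_2(\Gamma)$---in particular for any Cohen-Macaulay $\Gamma$---the map $\cdot \omega: \field(\Gamma)_1 \to \field(\Gamma)_2$ is injective for generic $\Theta$ and $\omega$ if and only if $G(\Gamma)$ is generically $d$-rigid, meaning that a generic placement of the vertices of $\Gamma$ in $\R^d$ gives an infinitesimally rigid framework.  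Since a $2$-CM complex is Cohen-Macaulay, it therefore suffices to prove that the $1$-skeleton of every $2$-CM complex of dimension $d-1 \ge 3$ is generically $d$-rigid.

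First I would record that being $2$-CM is inherited by vertex links: if $\Delta$ is $2$-CM and $v$ is a vertex, then $\lk v$ is $2$-CM of dimension $d-2$.  Indeed, links of Cohen-Macaulay complexes are Cohen-Macaulay, and for $u \ne v$ the link of $v$ in $\Delta \setminus u$ equals $(\lk v) \setminus u$, which, being a link in the Cohen-Macaulay complex $\Delta \setminus u$, is Cohen-Macaulay of the full dimension $d-2$; in particular $\lk v$ is never a simplex, so every vertex of a $2$-CM $(d-1)$-complex has degree at least $d$.  Assuming the statement in dimension $d-2$, Whiteley's cone theorem---a graph $G$ is generically $k$-rigid if and only if the cone over $G$ is generically $(k+1)$-rigid---gives that the graph of the closed star of $v$, namely the cone $v \star G(\lk v)$, is generically $d$-rigid for every vertex $v$.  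Now $G(\Delta) = \bigcup_v G(v \star \lk v)$, and whenever $\{v,w\}$ is an edge the two closed stars share the vertices $\{v,w\} \cup V(\lk\{v,w\})$, which number at least $2 + (d-2) = d$ since $\lk\{v,w\}$ is a Cohen-Macaulay complex of dimension $d-3$.  Ordering the vertices so that each one after the first is adjacent to an earlier one and applying the standard gluing lemma for generically $d$-rigid graphs meeting in at least $d$ vertices, one assembles $G(\Delta)$ and concludes that it is generically $d$-rigid.

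The induction bottoms out in dimension two, and this base case is where I expect the real difficulty to lie.  The closed-star argument cannot be used there, because the link of a vertex in a $2$-dimensional $2$-CM complex is only a $2$-connected graph, and $2$-connected graphs---long cycles being the obvious examples---need not be generically $2$-rigid.  So one needs an independent proof that the $1$-skeleton of an arbitrary $2$-dimensional $2$-CM complex is generically $3$-rigid.  The approach I would take is to combine Fogelsanger's theorem, that the graph of any minimal cycle complex of dimension at least two is generically $3$-rigid, with the trivial fact that generic $3$-rigidity is inherited from a spanning subcomplex; it then remains to exhibit, inside any $2$-dimensional $2$-CM complex, a spanning subcomplex that is a minimal cycle complex.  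The strong connectivity that $2$-Cohen-Macaulayness forces---every edge lies in at least two triangles, every vertex link is $2$-connected, and so on---should make such a subcomplex available, but pinning it down, and in a way uniform enough to feed the inductive step in all dimensions, is the point at which I would expect to spend most of the effort.
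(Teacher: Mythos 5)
The paper does not actually prove this statement; it is quoted from Nevo \cite{Nev3}, so the relevant comparison is with that argument, whose engine is Fogelsanger's theorem on minimal cycle complexes. The parts of your plan that are solid are the translation to generic $d$-rigidity of the $1$-skeleton (the same Kalai--Lee dictionary the paper invokes for Theorem \ref{rigidity}, legitimate here since $2$-CM complexes are Cohen--Macaulay, so $\dim_\field \field(\Delta)_i = h_i$), the observation that vertex links of $2$-CM complexes are $2$-CM, and the inductive step gluing closed stars via Whiteley's cone theorem along at least $d$ common vertices. The genuine gap is exactly where you put it, the base case, and the specific plan you sketch there cannot succeed: a $2$-dimensional $2$-CM complex need not contain a spanning subcomplex that is a minimal cycle complex. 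Take two triangulated $2$-spheres glued along a single common triangle. This complex is $2$-CM (deleting any vertex leaves a disk glued to a sphere along a triangle, or two disks glued along an edge, all Cohen--Macaulay of dimension two), but its space of $2$-cycles is spanned by the two fundamental cycles of the spheres, so the only minimal cycle supports are the two spheres themselves and neither contains all the vertices.

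The repair --- and essentially the route of Nevo's proof --- is a covering-and-gluing argument rather than a single spanning cycle. For each vertex $v$, the Mayer--Vietoris sequence of $\Delta = (\Delta - v) \cup \Star v$ with intersection $\lk v$ gives a surjection $H_{d-1}(\Delta) \to H_{d-2}(\lk v)$, because $\Star v$ is a cone and $\Delta - v$ is Cohen--Macaulay of dimension $d-1$, hence has vanishing homology in degree $d-2$; concretely, a nonzero top cycle $z$ of the $2$-CM complex $\lk v$ bounds a chain $w$ in $\Delta - v$, and $v \star z - w$ is a top cycle of $\Delta$ whose support contains $v$. This both shows that $2$-CM complexes have nonvanishing top homology and produces, through every vertex, a subcomplex to which Fogelsanger's theorem (generic $d$-rigidity of minimal cycle complexes of dimension at least two) can be applied after passing to a minimal cycle; one then glues these generically $d$-rigid graphs, in my glued-spheres example literally the two spheres overlapping in the $d=3$ vertices of the common triangle. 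The remaining work, which is the real mathematical content of the base case and is not a routine verification, is arranging the chosen cycles so that they cover all vertices and can be ordered with successive overlaps of size at least $d$; once that covering lemma is in place it works in every dimension, so the induction on dimension in your first paragraph is a convenience rather than the heart of the proof.
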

   
   One might be tempted to posit the existence of weak Lefschetz elements for $2$-CM complexes, but there are easy examples which show that the $h$-vector of a $2$-CM complex need not be unimodal.  For instance, let $\Delta_1$ be a three-sphere with $h$-vector $(1,10,40,10,1)$ and let $\Delta_2$ be the $3$-skeleton of the $11$-dimensional simplex.  So the $h$-vector of $\Delta_2$ is $(1,8,28,56,70).$  Now form $\Delta_3$ by identifying any two facets of $\Delta_1$ and $\Delta_2$.  The resulting complex is $2$-CM and has $h$-vector $(1,18, 68, 66, 71).$  Even if we restrict our attention to one of the most well-behaved classes of complexes, independence complexes of matroids without coloops, there might still be no weak Lefschetz elements.  
   
\begin{example} \label{nonfano}
Let $\overline{F}_7$ be the non-Fano matroid, the matroid obtained by relaxing one circuit-hyperplane of the Fano, and let $\overline{F}'_7$ be $\overline{F}_7$ with one free point added.  So $\overline{F}'_7$ is a rank three matroid on eight points and is representable over $\C.$  Now let $\Delta$ be the independence complex of $(\overline{F}'_7)^\star,$ the matroid dual of $\overline{F}'_7.$ Since $(\overline{F}'_7)^*$ is a rank five matroid without coloops, $\Delta$ is a four-dimensional $2$-CM complex.  (In fact, $\Delta$ is $3$-CM since the largest hyperplane of $(\overline{F}'_7)^*$ has cardinality five.)  By \cite{Huh} the $h$-vector of the independence complex of any matroid representable over $\C$ is log concave, hence unimodal.  Indeed, the $h$-vector of $\Delta$ is $(1,3,6,10,15,15).$  However, as the reader can check for themselves, for any l.s.o.p. $\Theta$ and one-form $\omega$ multiplication $\cdot \omega: \field(\Delta)_4 \to \field(\Delta)_5$ has nontrivial kernel.
\end{example}

While generalizing the existence of weak Lefschetz elements to $l$-CM complexes does not work, there is a different direction  that one might go in considering $g$-type results for $l$-CM complexes.  As far as we know the following question suggested by Reiner is completely open.

\begin{question} \cite{Reiner}
Let $\Delta$ be an $l$-CM complex.  Is it true that $h_0 \le h_1 \le \dots \le h_{\lfloor \frac{d\cdot(l-1)}{l}  \rfloor}?$
\end{question}

\section{Balls and other manifolds with boundary}  \label{balls}

In this section we will be deliberately vague in keeping track of what category of balls and spheres we are discussing.  We trust the reader to be sure that whatever constructions are under consideration are valid.  Certainly two likely candidates, PL-balls and PL-spheres, or $\field$-homology balls and $\field$-homology spheres would suffice.

It is hardly surprising that there are close connections between the study of $f$-vectors of spheres and $f$-vectors of balls.  The most obvious is that determining all possible $f$-vectors of balls includes determining  all possible $f$-vectors of spheres.  This can be seen in at least two different ways.  One approach is through coning.  Suppose  $(1, h_1, \dots, h_{d-1}, 1)$ is the $h$-vector of a sphere. Then $(1,h_1, \dots, h_{d-1},1,0)$ is the $h$-vector of the ball obtained by coning the sphere.  Conversely, suppose $(1, h_1, \dots, h_{d-1},1,0)$ is the $h$-vector of a ball such that $h_i= h_{d-i}$  for $0 \le i \le d.$  Now for any ball $\Delta,~g_i(\partial \Delta) = h_i(\Delta) - h_{d-i}(\Delta)$ \cite{Grabe}.  So the $h$-vector of the boundary sphere is $(1, h_1, \dots, h_{d-1},1).$  An alternative approach is to look at spheres with the interior of one facet removed.   If $(1,h_1, \dots, h_{d-1},1)$ is the $h$-vector of a sphere, then the $h$-vector of the ball obtained by removing the interior of a single facet is $(1,h_1, \dots, h_{d-1},0).$ On the other hand, if $(1, h_1, \dots, h_{d-1},0)$ with $h_i=h_{d-i}$ for $1 \le i \le d-1$ is the $h$-vector of a ball,  then the boundary of the ball is a simplex.  Filling in the boundary leaves a sphere with $h$-vector  $(1,h_1, \dots, h_{d-1},1).$

A less obvious connection  between the combinatorics of balls and spheres is that balls have Lefschetz and/or weak Lefschetz elements if and only if spheres do.  To make this statement precise we say a $(d-1)$-ball $\Delta$  has Lefschetz elements  if for generic choices of $\Theta$ and one-form $\omega,$ multiplication $\cdot \omega: \field(\Delta)_i \to \field(\Delta)_{d-i}$ is a surjection for $0 \le i \le d/2.$  Weak Lefschetz elements for balls are defined by insisting that multiplication $\cdot \omega: \field(\Delta)_{\lfloor d/2 \rfloor} \to \field(\Delta)_{\lfloor d/2 \rfloor +1}$ is a surjection.  Now let $\Delta'$ be the sphere obtained by coning off the boundary of $\Delta$ and consider the commutative diagram
$$ \begin{array}{ccccccccc}
0& \to& I& \to &\field(\Delta')_{i+1} &\to& \field(\Delta)_{i+1} &\to& 0\\
 & &\cdot \omega \uparrow & &\cdot \omega \uparrow& & \cdot \omega \uparrow& & \\
 0& \to& I& \to &\field(\Delta')_i &\to& \field(\Delta)_i &\to& 0
 \end{array}
 $$
 As pointed out by Stanley \cite{St93}, if the middle up arrow is surjective, then so is the right-hand up arrow.  So if spheres have Lefschetz elements (resp. weak Lefschetz elements), then balls do to.  Conversely, if balls have Lefschetz elements (resp. weak Lefschetz elements), then spheres do.  This is easily seen by again removing the interior of a facet from the sphere. 
 
 A much less well-known connection between balls and spheres is that there is an analog of Lefschetz elements for balls that involves isomorphism rather than surjectivity.  Consider the short exact sequence
 
 $$0 \to I \to \field[\Delta]/\Theta \to \field[\partial \Delta]/\Theta \to 0.$$
 
If $\omega$ is a  Lefschetz element for $\partial \Delta$ , then for $i \le d/2, ~\dim_\field (\field[\partial \Delta]/\Theta)_i = g_i(\Delta)$ and for $i > d/2, ~\field[\partial \Delta]/\Theta=0.$   Thus, for $i \le d/2$ the dimension of $I_i$ is $h_i(\Delta) - g_i(\partial \Delta) = h_{d-i}(\Delta)$  and the dimension $I_{d-i}=h_{d-i}(\Delta).$  A natural extension of the $g$-conjecture for spheres would be that multiplication by $\omega^{d-2i}: I_i \to I_{d-i}$ is an isomorphism.  If true this would imply the $g$-conjecture for spheres  by considering spheres with the interior of a single facet removed.

Another way in which the face rings of balls and spheres may interact is when the sphere is the boundary of a special ball.  We consider two examples.  The first uses properties of the face ring of an ambient sphere to prove the non negativity of the $g$-vector and was originally due to Kalai where he used algebraic shifting \cite{Ka91}.  Shortly thereafter Stanley gave an alternative proof \cite{St93}.   

\begin{theorem} \cite{Ka91}, \cite{St93}
Let $\Delta''$ be a $d$-homology sphere such that $\field[\Delta'']$ has Lefschetz elements.  Suppose $\Delta'$ is a $d$-homology ball which is a $d$-subcomplex of $\Delta''$ and set $\Delta = \partial \Delta'.$    Then for $i \le d/2,~ g_i(\Delta) \ge 0.$ 
\end{theorem}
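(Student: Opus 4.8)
The plan is to run Stanley's argument \cite{St93}: exploit the face-ring surjection coming from the inclusion $\Delta' \subseteq \Delta''$, transport the Lefschetz isomorphism of $\field(\Delta'')$ down to its ideal, and read off the conclusion through Gr\"abe's formula. Since $\Delta'$ is a full-dimensional subcomplex of $\Delta''$ we have $I_{\Delta''} \subseteq I_{\Delta'}$, hence a graded surjection $\field[\Delta''] \twoheadrightarrow \field[\Delta']$. Fix a generic l.s.o.p. $\Theta$ for $\field[\Delta'']$ together with a Lefschetz element $\omega$ for $\field(\Delta'')$; both exist by hypothesis. As $\field[\Delta']$ is a quotient of $\field[\Delta'']$, the sequence $\Theta$ is automatically an l.s.o.p. for $\field[\Delta']$ as well, so reducing modulo $\Theta$ gives an exact sequence of graded vector spaces
$$0 \to \mathcal{I} \to \field(\Delta'') \to \field(\Delta') \to 0,$$
in which $\mathcal{I}$ is by definition the kernel, an ideal of $\field(\Delta'')$ and in particular a graded $R$-module. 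Because homology balls and homology spheres are Cohen--Macaulay (Reisner \cite{Rei}), every l.s.o.p. is a regular sequence for either face ring, so $\dim_\field \field(\Delta'')_i = h_i(\Delta'')$ and $\dim_\field \field(\Delta')_i = h_i(\Delta')$ for all $i$; hence $\dim_\field \mathcal{I}_i = h_i(\Delta'') - h_i(\Delta')$.

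Next I would apply multiplication by $\omega^{d+1-2i}$ to this sequence, mapping degree $i$ into degree $d+1-i$. (Note $\Delta''$ is a $d$-dimensional sphere, so its l.s.o.p.\ has $d+1$ elements and the Lefschetz isomorphism runs from $\field(\Delta'')_i$ onto $\field(\Delta'')_{d+1-i}$, which is why the correct exponent is $d+1-2i$ rather than $d-2i$.) This produces the commutative diagram
$$\begin{array}{ccccccccc}
0 & \to & \mathcal{I}_{d+1-i} & \to & \field(\Delta'')_{d+1-i} & \to & \field(\Delta')_{d+1-i} & \to & 0 \\
 & & \cdot\omega^{d+1-2i}\uparrow & & \cdot\omega^{d+1-2i}\uparrow & & \cdot\omega^{d+1-2i}\uparrow & & \\
0 & \to & \mathcal{I}_{i} & \to & \field(\Delta'')_{i} & \to & \field(\Delta')_{i} & \to & 0.
\end{array}$$
For $0 \le i \le d/2$ the middle vertical arrow is an isomorphism since $\omega$ is a Lefschetz element for the $d$-sphere $\Delta''$, so the snake lemma forces the left vertical arrow $\cdot\omega^{d+1-2i}\colon \mathcal{I}_i \to \mathcal{I}_{d+1-i}$ to be injective. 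Therefore $h_i(\Delta'') - h_i(\Delta') = \dim_\field \mathcal{I}_i \le \dim_\field \mathcal{I}_{d+1-i} = h_{d+1-i}(\Delta'') - h_{d+1-i}(\Delta')$.

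Finally, the Dehn--Sommerville relations for the $d$-sphere $\Delta''$ (Theorem \ref{Klee}, with $\chi(\Delta'') = \chi(S^d)$) give $h_i(\Delta'') = h_{d+1-i}(\Delta'')$, so the displayed inequality collapses to $h_i(\Delta') \ge h_{d+1-i}(\Delta')$ for $0 \le i \le d/2$. By Gr\"abe's formula $g_i(\Delta) = g_i(\partial\Delta') = h_i(\Delta') - h_{d+1-i}(\Delta')$ \cite{Grabe}, this is precisely $g_i(\Delta) \ge 0$. I expect no serious obstacle: once the Cohen--Macaulay dimension counts and the Lefschetz isomorphism are in hand the argument is formal, and the only things demanding attention are the degree bookkeeping noted above and the observation that a generic l.s.o.p.\ for the ambient sphere descends to a valid l.s.o.p.\ for the subcomplex.
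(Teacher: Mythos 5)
Your proof is correct, and it is essentially the argument the paper points to: it reproduces Stanley's proof \cite{St93}, using the surjection $\field(\Delta'')\to\field(\Delta')$, the Lefschetz isomorphism on the ambient sphere restricted to the kernel ideal, Cohen--Macaulay Hilbert-function counts, Dehn--Sommerville, and Gr\"abe's formula, exactly the circle of ideas sketched around this theorem in Section 7. The degree bookkeeping (exponent $d+1-2i$, symmetry $h_i(\Delta'')=h_{d+1-i}(\Delta'')$, and $g_i(\partial\Delta')=h_i(\Delta')-h_{d+1-i}(\Delta')$ for the $d$-dimensional ball) is handled correctly.
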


Whether or not $g$-vectors of spheres $\Delta$ which satisfy the above hypotheses are M-vectors remains an open problem.  The largest test case is Murai's proof that Kalai's squeezed spheres have weak Lefschetz elements in characteristic zero \cite{Mur3}.   It is also very unclear what the class of spheres $\Delta$ which satisfy the above hypothesis look like.

  The second class of examples we consider are $i$-stacked spheres.  An $i$-{\bf stacked ball} is a ball $\Delta$ which has no interior faces of dimension $d-i-2$ or smaller.  For example,  a $2$-stacked $3$-ball has no interior  vertices and a stacked sphere is the same as a $1$-stacked sphere.  A sphere is $i$-stacked if and only if it is the boundary of an $i$-stacked ball.  Stacked spheres have a long history with Murai and Nevo's beautiful resolution of  McMullen and Walkup's greater lower bound conjecture being the most recent advance \cite{MN2}.    In the literature $i$ is usually restricted by $i \le d/2.$  As we will see in a moment, even for larger $i$ there are potentially interesting things to say about $i$-stacked spheres.

A simple relationship between stacked spheres and the face ring of their boundaries is easily seen using this very  general observation.

\begin{proposition}
Suppose $\Delta'$ is $d$-dimensional Cohen-Macaulay complex with $h_i(\Delta')=0$ and $\Delta$ is a $(d-1)$-dimensional subcomplex of $\Delta.$   Then for generic $\Theta$ and $\omega$ multiplication $\cdot \omega: \field(\Delta)_{i-1} \to \field(\Delta)_i$ is a surjection.
\end{proposition}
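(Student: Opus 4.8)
The plan is to push surjectivity down from the Cohen--Macaulay complex $\Delta'$ to its subcomplex $\Delta$ along the ring map induced by $\Delta\subseteq\Delta'$. First I would place both complexes in a common polynomial ring $R$; since $\Delta\subseteq\Delta'$ forces every nonface of $\Delta'$ to be a nonface of $\Delta$, we have $I_{\Delta'}\subseteq I_\Delta$ and hence a graded surjection $\field[\Delta']\to\field[\Delta]$. Now choose a linear system of parameters $\theta_1,\dots,\theta_{d+1}$ for $\field[\Delta']$ (which has Krull dimension $d+1$), generically inside $(R_1)^{d+1}$, and set $\Theta=(\theta_1,\dots,\theta_d)$ and $\omega=\theta_{d+1}$. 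For a generic such choice, $\theta_1,\dots,\theta_d$ is a regular sequence on the Cohen--Macaulay ring $\field[\Delta']$, and at the same time its image is an l.s.o.p.\ for the $d$-dimensional ring $\field[\Delta]$ (any $d$ generic linear forms are, and the first-$d$-coordinate projection of a generic point of $(R_1)^{d+1}$ is generic). Writing $A=\field[\Delta']/(\theta_1,\dots,\theta_d)$, we thus obtain a graded surjection $\psi\colon A\to\field(\Delta)$ compatible with multiplication by $\omega$.

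The next step is to analyze $A$. Since $\field[\Delta']$ is Cohen--Macaulay and $\theta_1,\dots,\theta_d$ is a regular sequence on it, $A$ is a one-dimensional Cohen--Macaulay ring, and $\omega$ is a parameter on $A$, hence a nonzerodivisor; so $\cdot\omega\colon A_{j-1}\to A_j$ is injective for every $j$, giving $\dim_\field(A/\omega A)_j=\dim_\field A_j-\dim_\field A_{j-1}$. But $A/\omega A=\field[\Delta']/(\theta_1,\dots,\theta_{d+1})=\field(\Delta')$, whose Hilbert function is the $h$-vector of $\Delta'$ by Reisner's theorem, $\Delta'$ being Cohen--Macaulay. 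In degree $i$ this reads $h_i(\Delta')=\dim_\field A_i-\dim_\field A_{i-1}$, which is $0$ by hypothesis; combined with injectivity, $\cdot\omega\colon A_{i-1}\to A_i$ is an isomorphism, in particular a surjection.

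Finally I would chase the commutative square whose rows are $\cdot\omega\colon A_{i-1}\to A_i$ and $\cdot\omega\colon\field(\Delta)_{i-1}\to\field(\Delta)_i$, with both vertical maps $\psi$. The composite $A_{i-1}\xrightarrow{\cdot\omega}A_i\xrightarrow{\psi}\field(\Delta)_i$ is a composite of surjections, hence surjective, and by commutativity it equals $A_{i-1}\xrightarrow{\psi}\field(\Delta)_{i-1}\xrightarrow{\cdot\omega}\field(\Delta)_i$; therefore $\cdot\omega\colon\field(\Delta)_{i-1}\to\field(\Delta)_i$ is surjective. Since the locus of $(\theta_1,\dots,\theta_{d+1})\in(R_1)^{d+1}$ on which all the genericity requirements above hold is a nonempty Zariski-open set, this establishes the conclusion for generic $\Theta$ and $\omega$. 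I do not anticipate a real obstacle here: the only points needing care are the genericity bookkeeping (that a prefix of a generic l.s.o.p.\ of $\field[\Delta']$ is at once a regular sequence there and an l.s.o.p.\ for $\field[\Delta]$, with $\theta_{d+1}$ still a parameter on $A$) and the identification $A/\omega A=\field(\Delta')$ together with the appeal to Reisner's theorem, all of which are standard.
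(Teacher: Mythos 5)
Your argument is correct and is essentially the paper's: both rest on the surjection $\field[\Delta']\to\field[\Delta]$ induced by the inclusion together with the Reisner--Stanley fact that $\dim_\field\bigl(\field[\Delta']/(\Theta,\omega)\bigr)_i=h_i(\Delta')=0$, so the degree-$i$ part of $\field[\Delta]/(\Theta,\omega)$, which is the cokernel of $\cdot\omega\colon\field(\Delta)_{i-1}\to\field(\Delta)_i$, vanishes. Your detour through $A=\field[\Delta']/\Theta$, proving $\cdot\omega$ is injective and hence an isomorphism there, is correct but unnecessary; the paper passes directly to the quotient by $(\Theta,\omega)$.
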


\begin{proof}
Since $\Delta$ is a subcomplex of $\Delta'$ there is a natural surjection $\field[\Delta']/(\Theta, \omega) \to \field[\Delta]/(\Theta,\omega).$ As $h_i(\Delta')=0,~\field[\Delta']/(\Theta, \omega)_i=0.$  Therefore $\field[\Delta]/(\Theta,\omega)_i$ is also zero.   
\end{proof}

\begin{corollary} \label{weak lefschetz for stacked spheres}
If $\Delta$ is an $i$-stacked sphere, then for generic $\Theta$ and $\omega$ multiplication $\cdot \omega: \field(\Delta)_i \to \field(\Delta)_{i+1}$ is surjective.  In particular, if $i \le d/2,$ then $\Delta$ has weak Lefschetz elements, and if $i > d/2, $ then $(1, g_1, \dots, g_{d-i})$ is an M-vector.
\end{corollary}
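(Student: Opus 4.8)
The plan is to deduce this from the proposition immediately above, applied to the $i$-stacked ball $\Delta'$ that $\Delta$ bounds, together with the Gorenstein property of $\field[\Delta]$. Since $\Delta$ is $(d-1)$-dimensional, $\Delta'$ is a $d$-dimensional ball, hence a $d$-dimensional Cohen--Macaulay complex, and $\Delta=\partial\Delta'$ is a $(d-1)$-dimensional subcomplex of it, so the preceding proposition applies as soon as we know the relevant entries of $h(\Delta')$ vanish.

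The first step is to check that $h_j(\Delta')=0$ for every $j\ge i+1$. For this I would use the canonical-module description already invoked in the proof of Theorem~\ref{even PL}: if $I\subseteq\field[\Delta']$ denotes the ideal generated by the monomials of the interior faces of $\Delta'$, then by Hochster's theorem $I$ is the canonical module of $\field[\Delta']$, and for generic $\Theta$ one has $\dim_\field (I/\Theta I)_k=h_{d+1-k}(\Delta')$. Since $\Delta'$ is $i$-stacked, every interior face of $\Delta'$ has dimension at least $d-i$, so $I$ is generated in degrees $\ge d-i+1$; hence $I_k=0$, and in particular $(I/\Theta I)_k=0$, and therefore $h_{d+1-k}(\Delta')=0$, for all $k\le d-i$, which is exactly $h_j(\Delta')=0$ for $j\ge i+1$. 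Applying the preceding proposition, with its index taken to be $j$ for each $j\ge i+1$, then gives that $\cdot\omega\colon\field(\Delta)_{j-1}\to\field(\Delta)_j$ is surjective for all $j\ge i+1$; equivalently, $\cdot\omega\colon\field(\Delta)_j\to\field(\Delta)_{j+1}$ is surjective for all $j\ge i$, which in particular is the first assertion.

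For the two ``in particular'' clauses I would use that $\field[\Delta]$ is Gorenstein with socle degree $d$. If $i\le d/2$ then $\lfloor d/2\rfloor\ge i$, so $\cdot\omega\colon\field(\Delta)_{\lfloor d/2\rfloor}\to\field(\Delta)_{\lfloor d/2\rfloor+1}$ is surjective, and by the standard reduction for homology spheres (surjectivity at the middle degree is enough) $\omega$ is a weak Lefschetz element. If $i>d/2$, Gorenstein duality converts surjectivity of $\cdot\omega$ in all degrees $\ge i$ into injectivity of $\cdot\omega\colon\field(\Delta)_k\to\field(\Delta)_{k+1}$ for all $k\le d-1-i$; since $i>d/2$ one has $d-i\le i-1$, so for $1\le j\le d-i$ the map $\cdot\omega\colon\field(\Delta)_{j-1}\to\field(\Delta)_j$ is injective and hence $\dim_\field(\field(\Delta)/(\omega))_j=h_j(\Delta)-h_{j-1}(\Delta)=g_j$. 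Thus $(1,g_1,\dots,g_{d-i})$ is an initial segment of the Hilbert function of the homogeneous ring $\field(\Delta)/(\omega)$; truncating that ring in degrees $>d-i$ and invoking Macaulay's theorem (Theorem~\ref{Macaulay}) shows this sequence is an M-vector.

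The only input beyond the preceding proposition is the translation of the combinatorial $i$-stackedness of $\Delta'$ into the vanishing $h_j(\Delta')=0$ for $j>i$, so that is where I expect the real content to sit; everything afterwards is formal manipulation with Gorenstein duality and Macaulay's theorem. The one point that requires care is the dimension convention: the filling ball $\Delta'$ has dimension one larger than $\Delta$, so the ``$d$'' in the definition of an $i$-stacked ball must be read here as the Krull dimension $d+1$ of $\field[\Delta']$, which is why ``no interior faces of dimension $d-i-2$ or smaller'' becomes ``every interior face has dimension at least $d-i$'' in the argument above.
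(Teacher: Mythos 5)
Your proposal is correct and follows the paper's own route: apply the preceding proposition to the $i$-stacked ball $\Delta'$ bounded by $\Delta$, the only substantive input being that $i$-stackedness forces $h_j(\Delta')=0$ for $j\ge i+1$. The only cosmetic differences are that the paper simply cites \cite[Proposition 2.4]{McM04} for this vanishing where you rederive it via the Hochster canonical-module description of the interior-face ideal, and that you spell out the routine Gorenstein-duality and Macaulay-truncation steps behind the two ``in particular'' clauses, which the paper leaves implicit.
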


\begin{proof}
If $\Delta'$ is an $i$-stacked ball then it has no interior $(d-i-2)$-faces.  Hence $h_{i+1}(\Delta')=0.$  See, for instance,  \cite[Proposition 2.4]{McM04}.  
\end{proof}

\noindent The  above development is alluded to just before  \cite[Theorem 8]{St80}.  A consequence of the above proofs is that if $\Delta$ is an $i$-stacked sphere with $i \le d/2,$ then $\omega$ is a weak Lefschetz element for $\field[\Delta]$ under one of the most natural genericity conditions on $\Theta$ and $\omega.$  To be precise write each $\theta_i = a_{i,1} x_1 + \dots + a_{i,f_0} x_{f_0}$ and $\omega=a_{d+1, 1} x_1 + \dots + a_{d+1,f_0} x_{f_0}.$  A natural genericity condition on $\Theta$ and $\omega$ would be to insist that each $(d+1) \times (d+1)$ minor of the matrix $A=(a_{i,j})$ be nonsingular.  Since this requirement would insure that $\Theta \cup \omega$ is a l.s.o.p. for $\Delta'$, it also implies that $\omega$ is a weak Lefschetz element for $\field(\Delta).$  This is not true for all spheres whose face rings have weak Lefschetz elements.  Indeed, it is not true for smallest  sphere which is not $i$-stacked for $i \le d/2$, the boundary of the octahedron.  We have no idea, whether or not the class of spheres for which this natural genericity condition is sufficient to guarantee that $\omega$ is a weak Lefschetz element for $\field[\Delta]$ is interesting .

Recently Bagchi and Datta \cite{BD2}, and Murai and Nevo \cite{MN} introduced stacked manifolds, with and without boundary,  as generalizations of stacked balls and spheres.  An $\field$-homology manifold with boundary is $i$-stacked if it has no interior $d-i-2$ faces and an $\field$-homology manifold without boundary is $i$-stacked if it is the boundary of an $i$-stacked $\field$-homology manifold with boundary.  It turns out that for $i < d/2$  and $\partial \Delta= \emptyset$, $\Delta$ is $i$-stacked if and only if all of its links are \cite[Theorem 4.6]{MN}, \cite{BD2}.   The class of stacked manifolds has several interesting properties.

\begin{itemize}
\item
Stacked manifolds have appeared in several $f$-vector minimizing problems.  For instance, the minimal triangulations of the trivial $S^{d-2}$ bundle over $S^1$ ($d$-odd) and the nontrivial $S^{d-2}$ bundle over $S^1$ ($d$ even)  minimize the $f$-vector of $\field$-homology manifolds with nontrivial first Betti number. All of these complexes are $1$-stacked.  All of the known vertex minimal triangulations of $S^2 \times S^3$ are $2$-stacked \cite{MN}.  
\item
Stacked complexes figure prominently in the search for tight triangulations \cite{BD3}.  
\item
If $\Delta$ is an $\field$-homology manifold with boundary, then $\Delta$ is $i$-stacked if and only if $h''_{i+1}=0$ $(0 \le i \le d-1)$ \cite[Theorem 3.1]{MN}.  Interestingly, whether or not $\Delta$ is $i$-stacked can be determined without knowing the Betti numbers used to compute $h''.$  Let $\sigma$ be a nonempty face of $\Delta.$   Then $h_{d-|\sigma|}(\lk \sigma)$ is one if $\sigma$ is an interior face and zero if it is on the boundary.  So the number of $j$-faces in the interior of $\Delta$ is just the sum over all $j$-faces $\sigma$ of $h_{d-|\sigma|}(\lk \sigma).$  This sum can be computed directly from the $h$-vector of $\Delta$ via \cite[Proposition 4.10]{Sw06}.
\item
If $\Delta$ is an $i$-stacked PL-manifold with boundary, then $\Delta$ has a handle decomposition consisting of handles of index $i$ or less.  Let $\Delta^{\prime\prime}$ be the second barycentric subdivision of $\Delta.$  The closed stars of the barycenters of the interior faces of $\Delta$ in $\Delta^{\prime\prime}$ taken in any order in which the dimension of the faces is not increasing produces the desired handle decomposition   \cite[Chapter 6]{RS}.
\item
Which PL-manifolds with boundary which have handle decompositions using handles of index $i$ or less have $i$-stacked triangulations? For $i=0,1$ or $d-2$, the answer is all.  
\item
The last two comments apply to stacked PL-manifolds without boundary where the corresponding  surgery presentation replaces handle decomposition.  
\item
If $\Delta$ is an $i$-stacked $\field$-homology manifold without boundary and  $i < d/2,$ then $\field[\lk v]$ has weak Lefschetz elements for all vertices and 
Theorem \ref{MN gtilde} applies.

\item  
\cite[Proposition 5.2]{MN} shows that when  $\Delta$ is an $i$-stacked $\field$-homology manifold without boundary and $i < d/2$, then  $(1, \hat{g}_1, \dots, \hat{g}_{\lfloor d/2 \rfloor})$ is an M-vector even if $\Delta$ is not orientable.  
\item
If $\Delta$ is an $\field$-homology manifold without boundary, $i$-stacked, and $i < d/2,$ then $\hat{g}_{i+1} = 0.$  Conversely,  if $\Delta$ is an $\field$-homology manifold without boundary and  $\field[\lk v]$ has weak Lefschetz elements for every vertex $v$, then  $\Delta$ is $i$-stacked whenever $\hat{g}_{i+1}=0$ and $i+1<d/2.$ \cite{MN}.  
\item
If $\Delta$ is an $i$-stacked $\field$-homology manifold without boundary, then $\beta_j = 0$ for $ i+1 \le j \le d-i-2$ \cite{MN}.  This and the corresponding fact that $\beta_j=0$ for $j \ge i+1$ \cite{MN} when $\Delta$ is an $i$-stacked $\field$-homology manifold with boundary can also be deduced from the surgery/handle decomposition structure of $\Delta.$

\end{itemize}

For arbitrary manifolds with boundary the analogs of $h''$ and $\hat{g}$ are not yet well-established.  One possibility for $h''$ is  \cite[Theorem 3.2]{NovSw3}.  For $\hat{g}_i,$ with $i \ge 3$ there are no current candidates.  For $g_2$ there is the very promising suggestion of Kalai, $\gamma(\Delta) = h_2 - \mbox{ \# of interior vertices}.$  Kalai originally showed that when $d \ge 4$, $\gamma(\Delta) \ge 0$ for any normal pseudomanifold with nonempty boundary \cite{Ka87}.  Since $h_{d-2}(\lk v)$ is one when $v$ is an interior vertex and zero when it is a boundary vertex, \cite[Proposition 2.3]{Sw04} shows that $\gamma(\Delta) =h_2 - h_{d-1} - d h_d$ when $\Delta$ is an $\field$-homology manifold with boundary.

\begin{theorem}  \cite[Theorem 5.1]{NovSw3}
If $\Delta$ is an $\field$-homology manifold with orientable boundary and $d \ge 5,$ then
$$\gamma(\Delta) \ge {d \choose 2} \beta_1(\partial \Delta) + d~ \beta_0(\partial \Delta).$$
If $d=4,$ then
$$\gamma(\Delta) \ge 3 \beta_1(\partial \Delta) + 4 \beta_0(\partial \Delta).$$

\end{theorem}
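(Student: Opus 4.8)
The plan is to bound $\gamma(\Delta)$ from below by transporting the problem to the boundary and applying the orientable lower bound for $g_2$ of closed homology manifolds, Corollary~\ref{beta1}(\ref{orientable beta1}), to each boundary component. The starting point is the elementary identity $\gamma(\Delta)=g_2(\Delta)+g_1(\partial\Delta)$, where $g_2(\Delta)=h_2(\Delta)-h_1(\Delta)$ and $g_1(\partial\Delta)=h_1(\partial\Delta)-1=f_0(\partial\Delta)-d$: this follows at once from $h_1(\Delta)=f_0(\Delta)-d$ and the fact that $\Delta$ has exactly $f_0(\Delta)-f_0(\partial\Delta)$ interior vertices. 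Fixing a generic l.s.o.p. $\Theta$ for $\field[\Delta]$ and a generic one-form $\omega$, Schenzel's formula gives $\dim_\field\field(\Delta)_2=h_2(\Delta)$ (the correction vanishes since $\Delta$ is connected), while the rigidity inequality (Theorem~\ref{rigidity}, which applies to homology manifolds with boundary) makes $\cdot\,\omega\colon\field(\Delta)_1\to\field(\Delta)_2$ injective; hence $\dim_\field(\field(\Delta)/(\omega))_2=h_2(\Delta)-h_1(\Delta)=g_2(\Delta)$. Since $\field[\Delta]$ surjects onto $\field[\partial\Delta]$, there is a degree-preserving surjection $\field(\Delta)/(\omega)\twoheadrightarrow\field[\partial\Delta]/(\Theta,\omega)$, so that $g_2(\Delta)\ge\dim_\field(\field[\partial\Delta]/(\Theta,\omega))_2$.

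The next step is to analyze $\field[\partial\Delta]/(\Theta,\omega)$ one component at a time. Write $\partial\Delta=S_1\sqcup\cdots\sqcup S_c$, so that $\beta_0(\partial\Delta)=c-1$. Because the face ring of a disjoint union is a fiber product over $\field$, in every degree $k\ge 2$ one has $(\field[\partial\Delta]/(\Theta,\omega))_k=\bigoplus_{j=1}^{c}(\field[S_j]/(\Theta,\omega))_k$, where $\Theta\cup\{\omega\}$ restricts to $d+1$ generic linear forms on the $(d-2)$-dimensional complex $S_j$, i.e.\ an l.s.o.p.\ of length $d-1$ together with two further generic forms. Schenzel's formula identifies $\dim_\field\field(S_j)_i=h_i(S_j)$ for $i\le 2$ (again the correction vanishes, as $S_j$ is connected), and since passing from $\field(S_j)$ to $\field[S_j]/(\Theta,\omega)$ lowers the degree-$2$ dimension by at most $2h_1(S_j)-1$ (each of the two extra forms cuts it by no more than the current degree-$1$ dimension), one gets $\dim_\field(\field[S_j]/(\Theta,\omega))_2\ge h_2(S_j)-2h_1(S_j)+1=g_2(S_j)-g_1(S_j)$. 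Summing over $j$ and combining with the first paragraph,
\[
\gamma(\Delta)=g_2(\Delta)+g_1(\partial\Delta)\ \ge\ \sum_{j=1}^{c}\bigl(g_2(S_j)-g_1(S_j)\bigr)+g_1(\partial\Delta)\ =\ \sum_{j=1}^{c}g_2(S_j)+d\,\beta_0(\partial\Delta),
\]
the last equality because $g_1(\partial\Delta)-\sum_j g_1(S_j)=\bigl(f_0(\partial\Delta)-d\bigr)-\sum_j\bigl(f_0(S_j)-d\bigr)=(c-1)d$.

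It then remains to bound $\sum_j g_2(S_j)$ from below. Each $S_j$ is an orientable $(d-2)$-dimensional $\field$-homology manifold without boundary. When $d\ge 5$ its dimension is at least $3$, so Corollary~\ref{beta1}(\ref{orientable beta1}) applies and gives $g_2(S_j)\ge\binom{d}{2}\beta_1(S_j)$; summing, $\sum_j g_2(S_j)\ge\binom{d}{2}\beta_1(\partial\Delta)$, which completes the case $d\ge 5$. When $d=4$ each $S_j$ is a closed orientable surface, and Klee's relation~(\ref{klee}) (equivalently Euler's formula) gives $g_2(S_j)=3\beta_1(S_j)$ outright, so $\sum_j g_2(S_j)=3\beta_1(\partial\Delta)$ and one obtains the stated bound with $3$ in place of $\binom{4}{2}$.

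The only point that really needs care is the appeal to the rigidity inequality for a manifold \emph{with} boundary (equivalently, generic rigidity of its $1$-skeleton in $\field^d$); granting that, everything else is formal bookkeeping with Schenzel's formula. It is worth noting where the two terms originate: $\binom{d}{2}\beta_1(\partial\Delta)$ is exactly Corollary~\ref{beta1} applied to the boundary components, while $d\,\beta_0(\partial\Delta)$ is forced by the failure of the face ring of a disconnected complex to split as a direct sum in degrees $0$ and $1$.
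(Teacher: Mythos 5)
Your bookkeeping is mostly sound: the identity $\gamma(\Delta)=g_2(\Delta)+g_1(\partial\Delta)$, the splitting of $(\field[\partial\Delta]/(\Theta,\omega))_2$ over the boundary components in degrees $\ge 2$, the estimate $\dim_\field(\field[S_j]/(\Theta,\omega))_2\ge g_2(S_j)-g_1(S_j)$, and the final appeal to Corollary~\ref{beta1} (resp.\ Klee's relation when $d=4$) applied to the closed orientable $(d-2)$-manifolds $S_j$ are all fine. But the step you yourself flag as ``the only point that really needs care'' is a genuine gap, and in fact it is false. Theorem~\ref{rigidity} is stated for normal pseudomanifolds, and by the definition used here a normal pseudomanifold has every codimension-one face in exactly two facets; a homology manifold with nonempty boundary is therefore excluded, so the theorem does not give injectivity of $\cdot\,\omega\colon\field(\Delta)_1\to\field(\Delta)_2$ in your setting. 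Moreover no such injection can exist in general: take $\Delta$ to be a stacked $(d-1)$-ball, e.g.\ two $(d-1)$-simplices glued along a common facet. This is Cohen--Macaulay with $h$-vector $(1,1,0,\dots,0)$, so $\field(\Delta)_2=0$ while $\field(\Delta)_1\ne 0$, and $g_2(\Delta)=-1$. Consequently your key intermediate inequality $g_2(\Delta)\ge\dim_\field(\field[\partial\Delta]/(\Theta,\omega))_2$ fails outright (the right-hand side is a dimension, hence nonnegative). Without injectivity the surjection onto the boundary ring only yields $g_2(\Delta)+\dim\ker(\cdot\,\omega)_1\ge\dim_\field(\field[\partial\Delta]/(\Theta,\omega))_2$, which propagates to the weaker conclusion $\gamma(\Delta)+\dim\ker(\cdot\,\omega)_1\ge\binom{d}{2}\beta_1(\partial\Delta)+d\,\beta_0(\partial\Delta)$, not the theorem. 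This is precisely why, for manifolds with boundary, one works with $\gamma$ rather than $g_2$: Kalai's result quoted just before the theorem is $\gamma(\Delta)\ge 0$, because $g_2$ alone can be negative, and your decomposition tries to bound the negative piece $g_2(\Delta)$ termwise, which cannot work.

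Note also that this paper does not prove the statement; it cites \cite[Theorem 5.1]{NovSw3}, so there is no in-paper argument to compare against. If you want to salvage your strategy, the natural repair is to leave the boundary-with-manifold setting before invoking rigidity or socle bounds: for instance, cone off the boundary components (or otherwise pass to a closed normal pseudomanifold built from $\Delta$), where Theorem~\ref{rigidity} and Theorem~\ref{socle dim} are actually applicable, and then translate the resulting inequality back into one about $\gamma(\Delta)$; the quantity $\gamma$ is tailored to exactly this kind of reduction, whereas splitting it as $g_2(\Delta)+g_1(\partial\Delta)$ and estimating the two pieces separately breaks down at the first piece.
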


As the $g$-conjecture for spheres in dimensions five and above demonstrates, a full accounting of the combinatorics of all of the triangulations of a given manifold has proven very difficult. There are only a few examples in dimension three \cite{Wal}, \cite{LSS} and four \cite{Sw}.  For manifolds with boundary this has proven even more so.  To get a sense of the difficulties we return to the case of balls.   Like spheres, there is a complete characterization of the $f$-vectors of balls   in dimensions four and below \cite{LS}, \cite{Kol}.  However, in higher dimensions the problem for balls may be significantly more difficult than for spheres.  As Kolins showed in \cite{Kol}  there is, at present, not even a credible {\bf conjectured} characterization of $f$-vectors of balls in dimensions six and above!

\noindent{\em 2010 Mathematics Subject Classification:}  05E45, 13F55.

\medskip \noindent{\em Keywords:} $g$-conjectures, $f$-vectors, face ring, Stanley-Reisner ring, Cohen-Macaulay, sphere

\end{document}